\theoremstyle{plain}
\newtheorem{lemma}{Lemma}[section]
\newtheorem{prop}[lemma]{Proposition}
\newtheorem{cor}[lemma]{Corollary}
\newtheorem{conj}[lemma]{Conjecture}
\newtheorem{thm}[lemma]{Theorem}
\newtheorem{de}[lemma]{Definition}
\newtheorem{rem}[lemma]{Remark}
\newtheorem*{thm*}{Theorem}
\newtheorem*{cor*}{Corollary}
\newenvironment{customthm}[1]
  {\innercustomthm}
  {\endinnercustomthm}
\newenvironment{customcor}[1]
  {\innercustomcor}
  {\endinnercustomcor}
\begin{document}

\title{Tensor diagrams and Chebyshev polynomials }
\author[L. Lamberti]{Lisa Lamberti}
\address{Mathematical Institute \\
University of Oxford \\
Oxford\\
OX2 6GG\\
United Kingdom 
}
\email{Lisa.Lamberti@maths.ox.ac.uk}

\begin{abstract}
In this paper, we describe a class of elements in the
ring of $\mathrm{SL}(V)$-invariant polynomial functions on the space of configurations of
vectors and linear forms of a 3-dimensional vector space $V.$ These elements are 
related to one another by an induction formula using
Chebyshev polynomials. 
We also investigate the relation between these polynomials and G.\ Lusztig's dual
canonical basis in tensor products of representations of $U_q(\mathfrak{sl}_3(\mathbb C)).$
\end{abstract}
\maketitle
\tableofcontents

\addtocontents{toc}{\setcounter{tocdepth}{1}}%

\let\thefootnote\relax\footnotetext{This project was supported by the Early Mobility Research Grant sponsored by the Swiss National Science Foundation (grant P2EZP$2_1$48747) and the Mary Ewart Junior Research Fellowship from Somerville College, University of Oxford.}

\section{Introduction} \label{sect:intro}

Given a complex vector space $V,$ consider the ring~$R_{a,b}(V)=\mathbb C[(V^\ast)^a\times V^b]^{\mathrm{SL}(V)}$ of 
polynomial functions on the space of configurations of $a$ vectors and $b$
covectors, which are invariant under the natural action of~$\mathrm{SL}(V).$
Rings of this type play a central role in representation theory, and their study
dates back to D.\ Hilbert. Over the last three decades,
bases of~$R_{a,b}(V)$ with remarkable properties were found
by G.\ Lusztig \cite{MR1180036,MR1227098} and 
further studied by many researchers, including G.\ Kuperberg 
\cite{Kuperberg}, B.\ Fontaine, J.\ Kamnitzer and G.\ Kuperberg 
\cite{fontaine_kamnitzer_kuperberg_2013} and 
M.\ Gross, P.\ Hacking, S.\ Keel and M.\ Kontsevich \cite{2014arXiv1411.1394G}.
To explicitly construct, as well as to compare, some of these bases remains 
a challenging problem, already open when $V$ is 3-dimensional. 
In the latter case, new perspectives in the study of canonical bases for $R_{a,b}(V)$
were suggested by S.\ Fomin and P.\ Pylyavskyy \cite{FP} who 
established that~$R_{a,b}(V)$ has (several) cluster algebra structures. 
These structures provide an approach to describing
canonical bases in~$R_{a,b}(V)$ by comparison with
other cluster algebras for which dual canonical bases are defined.
With this approach it follows, for example, that the set of cluster monomials forms the dual canonical
basis of~$R_{a,b}(V)$ when~$a=0$ and $b\leq8$
and (conjecturally) a subset of the latter for all
other values of~$a$ and~$b.$\\

The main goal of this paper is 
to describe and study~$\mathrm{SL}(V)$-invariants in~$R_{a,b}(V)$ which 
can be defined naturally by two different families of bivariated
Chebyshev polynomials. These invariants are 
of interest for the following reasons. 
We know of other algebras for which Lusztig's dual canonical bases are defined, and where basis elements can be defined by recursions of this type, 
see the contributions of B.\ Leclerc, P.\ Lampe, A.\ Berenstein and A.\ Zelevinsky, Ding and  Xu in \cite{Leclerc, MR2817684, MR3180605, MR2916330}. 
Moreover, the findings of B.\ Fontaine, J.\ Kamnitzer and G.\ Kuperberg, see \cite[Thm.\ 5.16]{fontaine_kamnitzer_kuperberg_2013},
suggest that there could be connections between one family of bivariated Chebyshev polynomials and Lusztig's semi-canonical basis \cite{MR1758244}.
In addition, we give a graphical descriptions of 
these recursively defined~$\mathrm{SL}(V)$-invariants in 
the language of {\em tensor diagrams}, 
or spiders, developed for rank 2 Lie algebras by G.\ Kuperberg in \cite{Kuperberg}.
In the setting of $R_{a,b}(V)$
tensor diagrams are finite bipartite graphs with a fixed 
proper coloring of their vertices in two colors, black and white, and 
with a fixed partition into boundary and internal vertices. 
Each internal vertex is trivalent and comes with a 
fixed cyclic order of the edges incident to it, see Definition \ref{tensordiag}.
If the boundary of $D$ consists of $a$ white and $b$ black vertices, one 
says that $D$ has type $(a,b).$ The coloring of the boundary of $D$ also determines a 
binary cyclic word, called the {\em signature of $D$} and denoted by $\sigma(D).$ For a 
fixed signature $\sigma,$ all invariants of $R_{\sigma}(V)\cong R_{a,b}(V)$, $a,b\in\mathbb Z_{\geq 0}$, 
can be described by tensor diagrams of type $(a,b)$ and signature $\sigma.$ Moreover, all tensor diagrams define 
$\mathrm{SL(V)}$-invariants. For example, the tensor diagrams illustrated in Figure ~\ref{tensor1} 
can be interpreted as the Weyl generators of $R_{a,b}(V)$. There is a 
linear basis of $R_{a,b}(V)$ spanned by invariants 
defined by {\em non-elliptic webs,} that is planar tensor diagrams whose internal faces have at least six sides. 
This basis was found in 1994 by G.\ Kuperberg in \cite{Kuperberg} and is known as Kuperberg's web basis.
The elements of this basis are called {\em web invariants}.\\

\begin{figure}[t]
      \psfragscanon
      \psfrag{c}[][]{$[W]^3$}
      \psfrag{t}[][]{$[\mathrm{thick}_3(W)]$}
      \psfrag{b}[][]{$[\mathrm{brac}_3(W)]$}
      \psfrag{e}[][]{$[\mathrm{band}_3(W)]$}
    \psfrag{sym}[][][0.80]{$\frac{1}{3\textrm{!}}\sum\limits_{\sigma\in \mathfrak{S}_3} \sigma$}
\includegraphics[width=14cm]{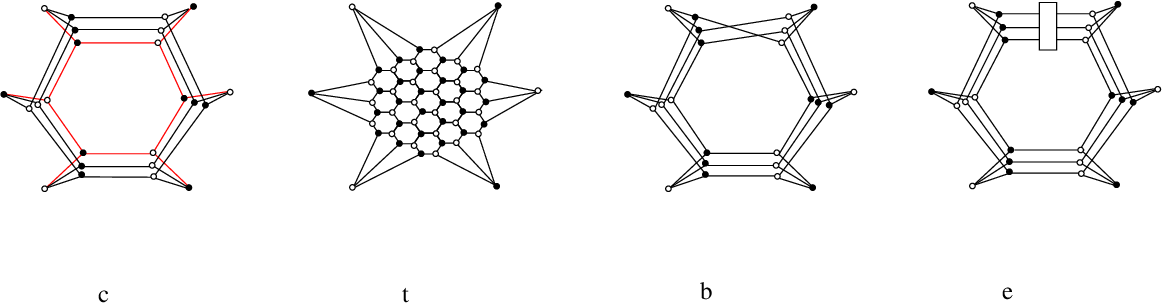}
\caption{Tensor diagrams defining
the invariants $[W]^3,$  $[\mathrm{thick}_3(W)],$ $[\mathrm{brac}_3(W)]$ and $[\mathrm{band}_3(W)]$ in $R_{\sigma(W)}(V)$ when $W$ is the non-elliptic web drawn in red in the left most diagram. }\label{twist}
\end{figure} 

Let us now turn to the main results of this paper. 
Let $[W_n]\in R_{\sigma(W_n)}(V)$ be a web invariant 
defined by an arbitrary non-elliptic web $W_n$
with a single internal face bounded by $n\geq 6$ sides.
In this paper we consider various types of operations on $W_n$. Let $k\in\mathbb Z_{\geq0}$.
The first operation is the {\em $k$-bracelet operation} defining the invariant 
$[\mathrm{brac}_k(W_n)]\in R_{\sigma(W_n)}(V)$ 
in the way shown in Figure ~\ref{twist} and more carefully described
in Definition \ref{de:bracelet}. 
The second operation is the {\em $k$-band operation} defining the 
invariant $[\mathrm{band}_k(W_n)]\in R_{\sigma(W_n)}(V)$ 
described in Definition \ref{def band} and obtained as the last tensor diagram 
in Figure ~\ref{twist}.
Let $[b(W_n)]=\frac{1}{2}([W_n]^2-[\mathrm{brac}_2(W_n)])\in R_{\sigma(W_n)}(V)$.
\begin{customthm}{3.19}\label{thmC1}
The $k$-bracelet operation of $W_n$ transforms the invariants as follows:
$$
[\mathrm{brac}_k(W_n)]=T_k([W_n],[b(W_n)])
$$
where $T_k(x,y)$ is the rescaled Chebyshev polynomial of 
the first kind.
\end{customthm}
\begin{customthm}{3.28}\label{thmC2}
The $k$-band operation of $W_n$ transforms the invariants as follows:
$$
[\mathrm{ band}_k(W_n)]=U_k([W_n],[b(W_n)])
$$
where $U_k(x,y)$ is the rescaled Chebyshev polynomial of 
the second kind.
\end{customthm}
Later in the text, these results are presented in a slightly more general form. The Chebyshev polynomials $(T_{k})_{k\in\mathbb Z_{\geq0}}$ 
and $(U_{k})_{k\in\mathbb Z_{\geq0}}$
are as in Definition \ref{deC1} and Definition \ref{ceC1}.
Another significant aspect of the band and bracelet operations is that they can be seen
as combinatorial deformations of the power transformation $[W_n]\mapsto[W_n]^k\in R_{\sigma(W_n)}(V)$ 
represented by superimposing $k$-copies of~$W_n$, see the example on the left most diagram in Figure ~\ref{twist}.
By superposition we mean that one repeats all vertices and edges of the $k$ copies of $W_n,$ except for the boundary 
vertices of $W_n$ which are not repeated. In Theorem~\ref{thm10.10} we show that
$[W_n]^k\in R_{\sigma(W_n)}(V)$ can equivalently be described by the web invariant $[\mathrm{thick}_k(W_n)]$
obtained by the {\em $k$-thickening operation} of $W_n$ given in Definition \ref{thickening}.
An example of the 3-thickening operation on a minimal  is given
in the second tensor diagram in Figure ~\ref{twist}.
This operation was introduced and studied by S.\ Fomin and P.\ Pylavskyy in \cite{FP}. Moreover,
the transformations we give in Theorem \ref{thmC1} and Theorem \ref{thmC2}
are potentially significant since they 
are natural adaptations to 
$\mathrm{SL(V)}$-invariant rings of the
topological characterizations of Chebyshev polynomials 
in Riemann surface cluster algebras as established by 
G.\ Musiker, R.\ Schiffler and L.\ Williams in \cite{MSW} and in
skein algebras as given by D.\ Thurston in \cite{MSW}. See also work of G.\ Dupont \cite{MR2912471} 
and references therein.\\

In the second part of this paper, we aim to compare
the $\mathrm{SL}(V)$-invariants described as in Theorem ~\ref{thmC1} and Theorem ~\ref{thmC2}  with elements of the dual of Lusztig's canonical basis.
More precisely, we consider the invariant space 
$\mathrm{Inv}(V^\sigma)=\mathrm{Hom}_{U_q(\mathfrak{sl}_3(\mathbb C))}(V^\sigma,\mathbb C(q^{1/2})$,
where  $V^\sigma$ is an arbitrary tensor product of the 
natural representation (of type +1) of~$U_q(\mathfrak{sl}_3(\mathbb C))$ and its dual. 
Varying the signature $\sigma$ one obtains multiple spaces of this type which specialize to the multilinear components of $R_{a,b}(V)$ at the classical limit at $q=1$, see Section \ref{subsec:backto} for more details.
Moreover, Kuperberg's web basis extends to a 
linear basis for each space $\mathrm{Inv}(V^\sigma)$ and any tensor product $V^\sigma$, see \cite{Kuperberg}. 
Hence, the $U_q(\mathfrak{sl}_3(\mathbb C))$-invariant spaces and 
the ring of $SL(V)$-invariants $R_{a,b}(V)$ share the 
diagrammatic representation of their elements.
In addition, Lusztig's dual canonical basis for the $U_q(\mathfrak{sl}_3(\mathbb C))$-invariant spaces $\mathrm{Inv}(V^\sigma)$
was studied in 1991 by M.\ Khovanov and G.\ Kuperberg in \cite{KK}.
In the same paper the authors show that Lusztig's dual canonical basis 
and Kuperberg's basis of $\mathrm{Inv}(V^\sigma)$ agree for 
all tensor products $V^{\sigma}$ up to 12 factors but are
different in general.

To understand this difference consider the two non-elliptic
webs on the right of Figure ~\ref{cheb2:intro}.
This figure can be interpreted as the invariant 
$[\mathrm{Thick}_2(W)]-[B(W)]\in \mathrm{Inv}(V^{\sigma(\mathrm{Thick}_2(W))})$, where
$V^{\sigma(\mathrm{Thick}_2(W))}$ indicates a certain tensor product of
12 factors given by copies of the natural representation and its dual.
The order of these factors is determined by the arguments of $[\mathrm{Thick}_2(W)]$.
Khovanov and Kuperberg show that this invariant defines  
an element in Lusztig's dual canonical basis for $\mathrm{Inv}(V^{\sigma(\mathrm{Thick}_2(W))})$, which we denote by $\ell(\mathrm{Thick}_2(W))$.
Since $\ell(\mathrm{Thick}_2(W))$ decomposes as a difference of two 
web-invariants, it follows that Kuperberg's basis and Lusztig's dual canonical basis are different.

This surprising conclusion can be related to the first part
of the paper by observing that the invariant
$\ell(\mathrm{Thick}_2(W))$ can 
be obtained from the band operation described above. 
To see this, let $W$ be the non-elliptic web consisting of a 
hexagon with an edge attached to each of its six vertices, 
drawn in red in Figure \ref{twist}. 
The 2-band operation on $W$ produces the tensor diagram
$\mathrm{band}_2(W)$. The corresponding invariant decomposes as $[\mathrm{thick}_2(W)]-[b(W)]$ in Kuperberg's basis for $R_{\sigma(W)(V)}$, where $\mathrm{thick}_2(W)$ and $b(W)$ are the non-elliptic webs on the left in Figure~\ref{cheb2:intro}. 
The non-elliptic webs on the right of Figure~\ref{cheb2:intro}
are then obtained from $\mathrm{thick}_2(W)$ and $b(W)$ by
separating all edges connected to a shared boundary point 
and repeating the boundary points. We denote the non-elliptic webs
obtained in this way by $\mathrm{Thick}_2(W)$ and $B(W).$

\begin{figure}[ht]
\centering
\psfragscanon
\psfrag{-}{$-$}
\psfrag{B_1}{$\ell(\mathrm{Thick}_2(W))=([\mathrm{Thick}_2(W)]-[B(W)]$}
\psfrag{B_2}{$[\mathrm{band}_2(W)]=[\mathrm{thick}_2(W)]-[b(W)]$}
\includegraphics[height=3cm]{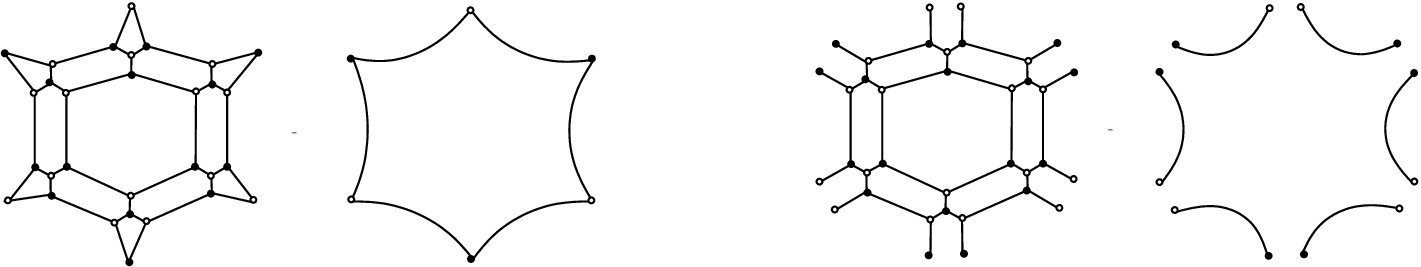}
\caption{The invariants $[\mathrm{band}_2(W)]$ ( on the left) and
$\ell(\mathrm{Thick}_2(W))$ (on the right) expressed in Kuperberg's web basis.}\label{cheb2:intro}
\end{figure}

In recent years, there has been an increasing amount of literature 
suggesting that this relation between the invariants $\ell(\mathrm{Thick}_2(W))$ 
and $[\mathrm{band}_2(W)]$ generalizes and that 
other dual canonical basis elements can be 
obtained by the band operation.
A first result in this direction is given in Theorem \ref{band2cd:2} below.
To state the result, let $W_n$ be as above. Let $\mathrm{Thick}_k(W)), k\in\mathbb Z_{\geq 0},$ be the non-elliptic web obtained by separating all edges 
connected at a same boundary point in the $k$-thickening of $W_n$ and repeating the boundary points.
Let $[\mathrm{Thick}_k(W))]\mathrm{Inv}(V^{\sigma(\mathrm{Thick}_k(W_n))})$ be the corresponding invariant. We then show:
\begin{customthm}{6.3}\label{band2cd:2}
\label{band2cd}
The invariant $[\mathrm{Thick}_2(W_n)]-[B(W_n)]$
belongs to Lusztig's dual canonical basis for $\mathrm{Inv}(V^{\sigma(\mathrm{Thick}_2(W_n))}).$
\end{customthm}

Also in this case, the difference $[\mathrm{Thick}_2(W_n)]-[B(W_n)]$ is obtained from the 2-band operation on $W_n$. We then explore how the above examples 
generalize to higher values of $k$ and ask if the 5-band operation on the hexagonal non-elliptic web $W$ produces a dual canonical basis element in a corresponding
$U_q(\mathfrak{sl}_3(\mathbb C))$-invariant space.
We then find an unexpected discrepancy as we explain in the next result:
\begin{customcor}{6.10}\label{contra}
Assume $\ell(\mathrm{Thick}_5(W))\in\mathrm{Inv}(V^{\sigma(\mathrm{Thick}_5(W))})$ has 
integer coefficients when expanded in Kuperberg's web basis.
Then, the invariants  
$[\mathrm{band}_3(W)]$ and
$[\mathrm{band}_5(W)]$
of $R_{\sigma(W)}(V)$ 
can not simultaneously belong to the specialization of Lusztig's dual canonical basis at $q=1$.
\end{customcor}

Summing up, in this paper we have identified two interesting families of invariants which can naturally be described with two recursions. To test the properties of these recursions and prove the above results we combine various recent developments in the theory of Lusztig's canonical bases and provide new considerations which we believe are relevant for further studies aimed at comparing the different bases for $R_{a,b}(V)$ and quantizations thereof.

\subsection{Acknowledgments} I would like to thank S.\ Fomin and P.\ Pylyavskyy 
for introducing me to this beautiful subject during the workshop on 
cluster algebras at MSRI, Berkeley, in  2012. I thank both of them 
for the always interesting and stimulating discussions we had. 
Among others, I wish to thank G.\ Kuperberg, P.\ Lampe, L.-H.\ Robert and D.\ Tubbenhauer  
for clarifying email conversations. I am also indebted to the anonymous referees for carefully reading this work and providing many helpful suggestions.
I also thank the Department of Mathematics of the University of Michigan  
for their hospitality and support during my research stay there.

\section{Preliminaries on the $\mathrm{SL(V)}$-invariant space $R_{\sigma}(V)$}\label{sect2}
We begin this section by introducing the ring of invariant $\mathrm{SL(V)}$-polynomial functions.
We then explain their relation to multi homogeneous invariant functions and multilinear symmetric tensors.
After this, we recall the important graphical characterization of the elements of the space $R_{a,b}(V)$ in terms of tensor diagrams, also called tensor networks in various other areas of science. This presentation of the invariants in $R_{a,b}(V)$ will enable us to define combinatorial operations on tensor diagrams and  establish links to other well understood rings. At the end of this section we introduce two families of Chebyshev polynomials and summarize
some of their properties.

The main references for this section are G. Kuperberg's work \cite{Kuperberg}, as well as
S.\ Fomin and P.\ Pylavskyy's contribution \cite[\S.~4]{FP} and the book of R.\ Goodman and N.W.\ Wallach \cite[\S.~5]{MR2522486}.


\subsection{$\mathrm{SL(V)}$-invariant polynomials}
For $V=\mathbb C^3$ and $V^\ast=\mathrm{Hom}_{\mathbb C}(V,\mathbb C)$ 
let $X=(V^\ast)^a\times V^b$ be the direct product of
$a$-copies of~$V^\ast$ and $b$-copies of~$V,$ for $a,b\in\mathbb Z_{\geq0}.$
The special linear group $\mathrm{SL(V)}$ acts on $V$ by left multiplication
and it induces a left action on $V^*$ given by
$
g\cdot u^\ast(v)=u^\ast(g^{-1}\cdot v)
$
for $g\in \mathrm{SL(V)},$ $v\in V$ and $u^\ast\in V^\ast.$
This action of $\mathrm{SL(V)}$ extends to an action on $X$ by defining
$$
g\cdot (\textbf{u}^\ast,\textbf{v})=
(g\cdot u ^\ast_1,\dots,g\cdot u^\ast_a,g\cdot v_1,\dots,g\cdot v_b)
$$
for $g\in\mathrm{SL(V)}$ and
$(\textbf{u}^\ast,\textbf{v})=(u^\ast_1,\dots,u^\ast_a,v_1,\dots,v_b)\in X.$

To describe $\mathrm{SL(V)}$-invariant polynomial functions on $X$
consider the standard basis for $V$ and  $V^\ast$ which determines
a basis for $X.$ A function $f:X\rightarrow \mathbb C$ is called {\em polynomial}
if it is given by a polynomial
in the basis coordinates of $X.$ The definition is
independent of the choice of a basis. 
Let $\mathbb{C}[X]$ be the {\em coordinate ring of $X$}, that is the $\mathbb C$-algebra
of polynomial functions on $X$ with $a$ covector arguments (elements of $V^\ast$)
and $b$ vector arguments (elements of $V$).

Let $\mathbb{C}[X]$ be the {\em coordinate ring of $X$}, 
that is the $\mathbb C$-algebra of polynomial functions on $X$ with $a$ covector arguments (elements of $V^\ast$)
and $b$ vector arguments (elements of $V$).
On 
$\mathbb{C}[X],$ the action of $\mathrm{SL(V)}$
is given by
$$
(g\cdot f)(\textbf{u}^\ast , \textbf{v})=
f( g^{-1} \cdot(\textbf{u}^\ast ,\textbf{v}))
$$
for $f\in \mathbb{C}[X], g\in \mathrm{SL(V)}.$

\begin{de}
A function $f\in \mathbb C[X]$ is called $\mathrm{SL(V)}$-invariant
if $f(g(\textbf{u}^\ast , \textbf{v}))=f(\textbf{u}^\ast , \textbf{v})$ for all
$g \in \mathrm{SL}(V)$ and $(\textbf{u}^\ast , \textbf{v})\in X.$
Let $R_{a,b}(V)=\mathbb C[X]^{\mathrm{SL(V)}}$ be the subalgebra of $\mathbb C[X]$
of $\mathrm{SL(V)}$-invariant polynomial functions on $X.$ 
\end{de}
The {\em signature} of an $\mathrm{SL(V)}$-invariant polynomial function $f$ is a word in
the alphabet $\{\circ,\bullet\}$ reflecting the order of the $a+b$ arguments
of $f.$

Being $R_{a,b}(V)$ a $\mathbb C$-algebra implies that $R_{a,b}(V)$ 
has a $\mathbb C$-vector space structure compatible with the ring multiplication of $\mathbb C[X].$ 

Main examples of elements of $R_{a,b}(V)$ are:
Pl\"ucker coordinates, given by $\mathrm{det}(v_i,v_j,v_k)$ 
for any tuple $(v_i,v_j,v_k)$ of vectors in $X$;
dual Pl\"ucker coordinates, given by $\mathrm{det}(u^\ast_i,u^\ast_j,u^\ast_k)$
for any tuple $(u^\ast_i,u^\ast_j,u^\ast_k)$ of covectors in $X$; and bilinear
pairings $ \langle u^\ast,v\rangle$ of covectors with vectors of $X.$
\begin{thm}[First Fundamental Theorem for $\mathrm{SL(V)}$] \
The ring $R_{a,b}(V)$ is generated by $\langle u^\ast,v\rangle$ and $\mathrm{det}(v_i,v_j,v_k)$ and
$\mathrm{det}(u^\ast_i,u^\ast_j,u^\ast_k).$
\end{thm}
These three classes of generators of $R_{a,b}(V)$ are called {\em Weyl generators}.


\subsection{Multi homogeneous invariants}

Let $X=(V^\ast)^a\times V^b$ be as before.
A polynomial function $f\in\mathbb C[X]$ is multi homogeneous of degree 
$[\textbf{p},\textbf{q}]=(p_1,\dots,p_a,q_1,\dots,q_{b})\in\mathbb Z_{\geq0}^a\times \mathbb Z_{\geq0}^b$ 
if for all 
$(t_1,\dots,t_{a+b}) \in(\mathbb C^{\times})^{a}\times(\mathbb C^{\times})^{b}$ we have
$$
f(t_1u^\ast_1,\dots,t_au^\ast_a,t_{a+1}v_1,\dots,t_{a+b}v_b)=t_1^{p_1}t_2^{p_2}\dots t_{a+b}^{q_{b}}(u^\ast_1,\dots,u^\ast_a,v_1,\dots,v_b).
$$
Clearly, a multi homogeneous function is multilinear if and only if
it is of degree $(1,1,\dots, 1).$

Every polynomial function $f$ in $\mathbb C[X]$ decomposes in a unique 
way into a sum of multi homogeneous functions:
$f=\sum f_{[\textbf{p},\textbf{q}]},$ where $f_{[\textbf{p},\textbf{q}]}$ are called multi homogeneous components of $f.$ Letting
$
\mathbb C [X]_{[\textbf{p},\textbf{q}]}=\{f\in\mathbb C[X]: f \textrm{ has multi homogeneous degree } [\textbf{p},\textbf{q}]\}
$
gives rise to the decomposition
\begin{align*}
\mathbb C [X]
=\displaystyle\bigoplus_{\textbf{p}\in\mathbb Z_{\geq0}^{a}} \displaystyle\bigoplus_{\textbf{q}\in\mathbb Z_{\geq0}^{b}} \mathbb C [X]_{[\textbf{p},\textbf{q}]}.
\end{align*}
If $f$ is $\mathrm{SL}(V)$-invariant then every multi homogeneous component
of $f$ is also $\mathrm{SL}(V)$-invariant and one has
\begin{align}\label{dec}
R_{a,b}(V)&=\mathbb C [X]^{\mathrm{SL}(V)}=\displaystyle\bigoplus_{\textbf{p}\in\mathbb Z_{\geq0}^{a}} \displaystyle\bigoplus_{\textbf{q}\in\mathbb Z_{\geq0}^{b}} \mathbb C [X]^{\mathrm{SL}(V)}_{[\textbf{p},\textbf{q}]}.
\end{align}


\subsection{Multilinear invariants}\label{Invariant}

In the following, we present a second realization of the ring $R_{a,b}(V)$ in terms of symmetric tensors. 
This result  is important since it will later enable us to link $R_{a,b}(V)$ to the space of $U_q(\mathfrak{sl}_3(\mathbb C))$-invariants.

For $\textbf{p}\in\mathbb Z_{\geq0}^{a}$ and $\textbf{q}\in\mathbb Z_{\geq0}^{b}$ consider $({V^\ast})^{\otimes \textbf{p}}\otimes V^{\otimes \textbf{q}}$
with the $\mathrm{SL}(V)$ action given by left multiplication with $g$ on each factor $V$ and by left multiplication with
$g^{-1}$ on each factor $V^\ast,$ as before. For $|\textbf{p}|=\sum p_i$ and $|\textbf{q}|=\sum q_i$
let $\mathfrak{S}_{\textbf{p}}=\mathfrak{S}_{p_1}\times \mathfrak{S}_{p_2}\times \dots \times \mathfrak{S}_{p_a}$ be acting
as a group of permutations of $\{1,\dots, |\textbf{p}|\},$ where the symmetric group $\mathfrak{S}_{p_1}$ permutes
the factors in position $1$ up to $p_1,$ $\mathfrak{S}_{p_2}$ permutes
$p_1+1,\dots, p_1+p_2,$ and so on. Then $\mathfrak{S}_{\textbf{p}}\times \mathfrak{S}_{\textbf{q}}$ acts
on $({V^\ast})^{\otimes \textbf{p}}\otimes V^{\otimes \textbf{q}}$ and 
the actions of $\mathfrak{S}_{\textbf{p}}\times \mathfrak{S}_{\textbf{q}}$ and $\mathrm{SL}(V)$ commute. 

In view of the following result, let
$S^{\textbf{p}}(V^{\ast})=S^{p_1}(V^{\ast})\otimes\dots\otimes S^{p_a}(V^{\ast})$ and
$S^{\textbf{q}}(V)=S^{q_1}(V)\otimes\dots\otimes S^{q_b}(V),$ where
$S^k(V),$ resp.~$S^k(V^\ast)$ is the space of symmetric $k$-tensors over $V,$ resp.\ over $V^\ast.$

\begin{lemma}\cite[Lemma 5.4.1]{MR2522486}\label{decop}
Let $\textbf{p}\in\mathbb Z_{\geq0}^{a}$ and $\textbf{q}\in\mathbb Z_{\geq0}^{b}.$ There is a linear isomorphisms:
\begin{align*}
R_{a,b}(V)&\cong\displaystyle\bigoplus_{\textbf{p}\in\mathbb Z_{\geq0}^{a}} \displaystyle\bigoplus_{\textbf{q}\in\mathbb Z_{\geq0}^{b}} 
\Big[\big((V^{\ast})^{\otimes |\textbf{p}|}\otimes V^{\otimes |\textbf{q}|}\big)^{\mathfrak{S}_{\textbf{p}}\times \mathfrak{S}_{\textbf{q}}}\Big]^{\mathrm{SL}(V)}\\
&\cong\displaystyle\bigoplus_{\textbf{p}\in\mathbb Z_{\geq0}^{a}} \displaystyle\bigoplus_{\textbf{q}\in\mathbb Z_{\geq0}^{b}} 
\Big[S^{\textbf{p}}(V^{\ast})\otimes S^{\textbf{q}}(V)\Big]^{\mathrm{SL}(V)}.
\end{align*}
\end{lemma}
The space $R_{a,b}$ can also be seen as a quotient space of 
$
\big((V^\ast)^{\otimes |\textbf{p}|}\otimes V^{\otimes |\textbf{q}|}\big)^{\mathrm{SL}(V)}
$
under the quotient homomorphism:
$$
(V^{\ast})^{\otimes |\textbf{p}|}\otimes V^{\otimes |\textbf{q}|}\twoheadrightarrow \big((V^\ast)^{\otimes |\textbf{p}|}\otimes V^{\otimes |\textbf{q}|}\big)^{\mathfrak{S}_{\textbf{p}}
\times \mathfrak{S}_{\textbf{q}}}.
$$ 

These two presentations of $R_{a,b}(V)$
will later be implicitly assumed when we recall the 
graphical presentation of invariants in $R_{a,b}(V).$


\subsection{Tensor diagrams}

Let $id: V^\ast\times V\rightarrow \mathbb C$ 
be the {\em identity tensor} given by
$(u^\ast,v)\mapsto \langle u^\ast,v\rangle$
for all $u^\ast\in V^\ast$ and $v\in V.$
Let $\mathrm{vol}\in\Lambda^3V^\ast$ be a basis vector of the 
one-dimensional vector space $\Lambda^3V^\ast.$ Let
$\mathrm{vol}^\ast\in\Lambda^3V$ be
defined by $$\mathrm{vol}(v_1,v_2,v_3)\mathrm{vol}^\ast(u^\ast_1,u^\ast_2,u^\ast_3)=
\mathrm{det}(\langle u_j^\ast, v_i \rangle)_{\substack{1\leq i \leq 3 \\1\leq j\leq 3}}.$$

Every element in $R_{a,b}(V)$ 
can be obtained from $\mathrm{vol},$ $\mathrm{vol}^\ast$
and $id$ by the operations of tensor product
and contraction and taking linear combinations with scalar coefficients,
see \cite{Kuperberg, FP}.

To obtain a diagrammatic representation of
these invariant tensors, one associates 
to $\mathrm{vol},$ $\mathrm{vol}^\ast$
and $id$ certain bicolored planar graphs
as represented in Figure ~\ref{tensor1}.
Each such graph is embedded inside a disc (not drawn) passing through the 
univalent vertices. The latter represent the arguments of the corresponding tensor.
To distinguish the type of arguments, we indicate vector arguments by $\bullet,$ 
and covectors arguments by $\circ.$ The trivalent vertices are colored with the opposite
color so that the result is indeed a bicolored graph. 
To ensure that these diagrams represent well defined tensors, one also
specifies the clockwise ordering of the three edges meeting
at the internal trivalent vertex. 

\begin{figure}
\includegraphics[scale=0.5]{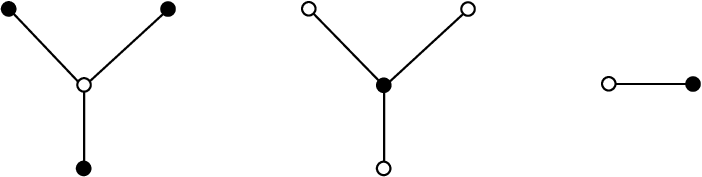}
\centering
\caption{Tensor diagrams associated with the three types of Weyl generators of $R_{a,b}(V).$  }\label{tensor1}
\end{figure}

For multilinear invariants, the operation of tensor product is 
represented by juxtaposing the corresponding tensor diagrams (thus ensuring that the result is again multilinear). 
Contraction of two tensors with respect to a pair of arguments 
of opposite valence is represented by connecting two adjacent boundary points 
with opposite color by an arc (and removing the boundary points of the joint
edges). Contraction with the tensor diagram representing the identity tensor
does not change the tensor one produces. Hence
tensor diagrams don't depend on the number or locations
of the pieces corresponding to the identity tensor.

Iterating these operations, bigger tensor 
diagrams representing multilinear tensors can be obtained.  
To preserve well definiteness one imposes that the 
cyclic ordering of the edges incident to each interior vertex 
matches the clockwise orientation of the disc.

\begin{de}\label{tensordiag}
A {\em tensor diagram $D$} is a finite bipartite planar graph 
drawn in a disc with a fixed 
proper coloring of its vertices into two colors, black and white, and 
with a fixed partition of its vertex set into boundary and internal vertices of $D,$ 
satisfying the following conditions:
\begin{itemize}
\item Each internal vertex is trivalent;
\item for each internal vertex, a cyclic order on the edges incident to it is fixed.
\end{itemize}
\end{de}
Unless specified otherwise, tensor diagrams will always be
considered up to isotopy fixing the boundary of the disc. 

In the following, we say that a tensor diagram $D$ is {\em planar} if the
edges of $D$ don't cross, otherwise we say that $D$ is {\em non-planar}.
Therefore, tensor diagrams are not necessarily 
embeddings of graphs into a disc.
For example, in Figure ~\ref{twist} all but the second are non-planar tensor diagrams, in the sense of the above definition.
The boundary points of a tensor diagram indicate the arguments of the corresponding
tensor, and as such they determine its signature.
If the boundary of 
$D$ consists of $a$ white vertices and $b$ black ones, then we say that
$D$ has type $(a,b).$


\subsection{Clasping and unclasping}

In the previous section, we gave a diagrammatic description of 
multilinear invariants of $R_{a,b}(V).$ But not all invariants of $R_{a,b}(V)$ are of this type.
To extend this graphical description  to all invariants of $R_{a,b}(V)$ we also have
to consider multi homogeneous invariants.
To do so we now recall the {\em restitution}, 
resp.~{\em partial restitution}, operators defined by H.\ Kraft and C.\ Procesi in \cite[\S.4.6]{MR1044584}.
Let $d=(d_1,d_2,\dots, d_r)\in\mathbb Z_{\geq0}^r$ and $\vert d\vert=\sum _i^r d_i.$
The restitution of a multilinear invariant 
$F(v_1,\dots,v_{|d|})$ is the multi homogeneous 
invariant $\mathcal{R}F(v_1,\dots,v_r)$ 
of degree $(d_1,d_2,\dots,d_r)$ obtained 
from $F$ by setting
$$
\mathcal{R}F(v_1,\dots,v_r):=F(\underbrace{v_1,\dots, v_1}_{d_1},\underbrace{v_2,\dots, v_2}_{d_2},\dots,\underbrace{v_r,\dots, v_r}_{d_r})
$$
where $v_i$ are vector or covector arguments of $F$ and $1\leq  i\leq \vert d \vert.$

Let $\sigma_{F}=(s_1,\dots, s_{|d|})$ be the signature of $F.$ 
Then the signature of $\mathcal{R}F,$ denoted by $\sigma_{\mathcal{R}F},$  is obtained from
$\sigma_F$ by replacing the $i$-th substring $(s_i,\dots, s_i)$ of lengths $d_i$ with $s_i,$ so that
$$
\sigma_{\mathcal{R}F}=(s_1,\dots,s_r)=(\underbrace{s_1,\dots,s_1}_{d_1},\underbrace{s_2,\dots, s_2}_{d_2},\dots,\underbrace{s_r,\dots, s_r}_{d_r}).
$$
Restitution operation with respect to some variables $v_i$ of $F,$ 
leaving the other variables unchanged will be called partial restitution.

On the level of tensor diagrams, restitution is described as follows.
Let $D$ be the tensor diagram representing a multilinear invariant $F$ with signature $\sigma_F.$ This means in particular that the
boundary points of $D$ are connected to a single edge.
Then $\mathcal{R}F$ is represented by the tensor diagram with signature $\sigma_{\mathcal{R}F}$ obtained from $D$
by joining all edges formerly incident to the single vertex $s_i.$
In this way, the multi degree of the $i$-th argument of the invariant $\mathcal{R}F$ is equal
to the degree of the boundary vertex $s_i,$ i.e.\ the number of edges that connect to it. 
In the following, we call the (partial) restitution operation on invariants or on the corresponding tensor diagrams simply {\em clasping}. 

\begin{rem}
The restitution presented above, are special cases of Kuperberg's 
clasping operations defined in \cite{Kuperberg}.  
\end{rem}

The {\em full unclasping} of a tensor diagram $D$ with boundary vertices of a given multi degree $d=(d_1,d_2,\dots, d_r),$ 
is the tensor diagram obtained from $D$ by replacing
each boundary vertex of color $s_i$ and of degree $d_i,$ by $d_i$ distinct successive and equally colored vertices serving as endpoints of the edges formerly incident to $s_i.$ 
In the unclasping, we also assume that no additional edge crossings will be created.
There are in general many  different multilinear invariants
clasping to the same multi homogeneous invariant.


\subsection{Diagrammatic description of the multiplication in $R_{a,b}(V)$}\label{multiplication}

The calculus of tensor diagrams yields a diagrammatic
interpretation of the addition and multiplication operations of $\mathrm{SL(V)}$-polynomial functions in $R_{a,b}(V).$
Addition is obtained formally by allowing linear combinations of
tensor diagrams. Multiplication in $R_{a,b}(V)$ can be modeled using 
{\em superimposition} of diagrams defined in the following way: if $D$ is the union of 
sub-diagrams $D_1,D_2,\dots$ connected only at the boundary vertices,
then $[D]=[D_1][D_2]\dots.$ Notice that the invariant $[D]$
does not depend on the vertical order in which the superimposition of $D_1, D_2, \dots$ is made, since the ring $R_{a,b}(V)$ is commutative. On the left of Figure ~\ref{twist} we illustrate the multiplication $[W]^3$ in $R_{a,b}(V).$


\subsection{Skein relations for tensor diagrams}

The diagrammatic description of invariants in $R_{a,b}(V)$ given above is
not unique. 
To obtain uniqueness one has to consider a number of local relations satisfied by invariants in $R_{a,b}(V)$ and
illustrated on the tensor diagrams in Figure ~\ref{skein}. For example, relation $(f)$ in Figure ~\ref{skein} might be
interpreted as the trivial invariant $\mathrm{det}(v,v,v)=\mathrm{det}(u^\ast,u^\ast,u^\ast)=0$.
Relation $(e)$ represents a trivial invariant with a symmetric and antisymmetric pair of arguments, but since we consider the case of symmetric tensors, we let corresponding invariant be the trivial invariant. 

These relations, called {\em skein relations}, allow one to transform a small fragment
$F$ of the diagram $D$ into linear combinations of other diagrams $F=c_iF_i, c_i \in \mathbb Z_{\geq0},$
where the $F_i$  are tensor diagrams of the same type as $F.$
Whenever $F$ 
defines the same invariant as $c_iF_i,$ one has that
$[D]=\sum_ic_i[D_i], $ where $D_i$ indicates the tensor diagram obtained from $D$
by replacing the fragment $F$ with the other pieces $F_i$ and keeping the rest of the diagram unchanged.

Quantum versions of relations (a), (c) and (d) were first given by 
G.\ Kuperberg \cite{doi:10.1142/S0129167X94000048}, the remaining ones
were introduced by S.\ Fomin and P.\ Pylyavskyy \cite{FP}. 

\begin{figure}[ht]
\psfragscanon
\psfrag{+}{$+$}
\psfrag{-}{$-$}
\psfrag{=}{$=$}
\psfrag{z}{-2}
\psfrag{x}{3}
\psfrag{v}{0}
\psfrag{a}{(a)}
\psfrag{b}{(b)}
\psfrag{c}{(c)}
\psfrag{d}{(d)}
\psfrag{e}{(e)}
\psfrag{f}{(f)}
\includegraphics[scale=0.5]{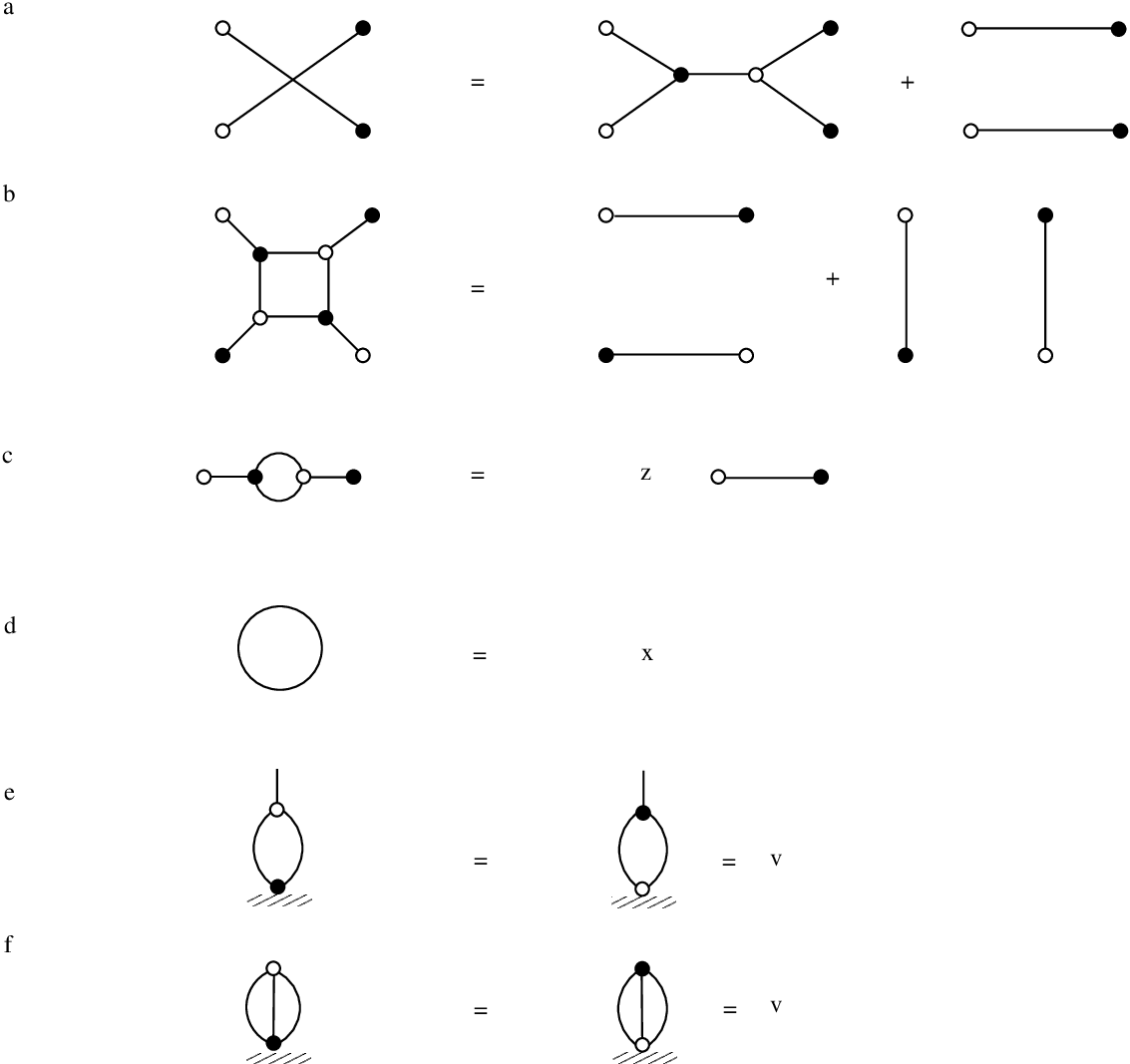}
\centering
\caption{Skein relations.
The relations (e) and (f) involve a vertex lying 
on the boundary, here represented with dashed lines.}\label{skein}\label{Skeinbound}
\end{figure}


\subsection{Web invariants}
In the following definition, we say that a face of a tensor diagram is {\em internal}
if the vertices of the edges bounding the face are all internal vertices, see Definition \ref{tensordiag}.
\begin{de}
A {\em web} $W$ is a tensor diagram embedded in an oriented disc so that its
edges do not cross or touch each other, except at endpoints. 
Each web is considered up to isotopy of the disc that fixes its boundary.
A web is {\em non-elliptic} if it has 
no internal face bounded by two or four edges.
The invariant $[W]\in R_{a,b}(V)$ associated with a non-elliptic 
web $W$ is called a {\em web invariant}.
\end{de}
A web with an internal face bounded by two or four edges is called {\em elliptic}.
The {\em signature} of a web $W,$ denoted by $\sigma(W),$ is defined as 
the word in the alphabet $\{\circ, \bullet\}$
obtained from the boundary of $W.$

\begin{thm} [Kuperberg \cite{doi:10.1142/S0129167X94000048}]\label{webbasis}
Web invariants with a fixed signature $\sigma$ of type $(a,b)$
form a $\mathbb C$-linear basis in the ring of invariants $R_\sigma(V)\cong R_{a,b}(V).$
\end{thm}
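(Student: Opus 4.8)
The plan is the classical two-step argument: first show that the non-elliptic web invariants of a fixed signature $\sigma$ span $R_\sigma(V)$, using the First Fundamental Theorem together with the skein relations of Figure~\ref{skein}, and then establish their linear independence by a dimension count. One caveat must be built in from the start: a web of signature $\sigma$ may have several edges meeting the same boundary vertex (restitution), so $R_\sigma(V)$ is infinite dimensional and the statement is really about all non-elliptic webs of that signature. I would therefore carry out both steps \emph{one multidegree at a time}, using the grading $R_\sigma(V)=\bigoplus_{(p,q)} R_\sigma(V)_{(p,q)}$ by the multidegree $(p,q)$ recording how many edges end at each boundary vertex, since every reduction below preserves the boundary data and the multidegree.

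\emph{Spanning.} Fix a multidegree $(p,q)$. By the First Fundamental Theorem recalled above, $R_\sigma(V)_{(p,q)}$ is spanned by invariants $[D]$ of tensor diagrams $D$ built by superimposing the $\mathrm{vol}$ and $\mathrm{vol}^\ast$ blocks with the prescribed boundary. First apply the crossing-resolution skein relation of Figure~\ref{skein} repeatedly to rewrite each $[D]$ as a linear combination of invariants of \emph{planar} webs of the same signature and multidegree. Then eliminate every multiple edge and every internal face of length $\le 4$ using the remaining (digon- and square-reduction) relations of Figure~\ref{skein}: a digon move strictly lowers the number of edges, while a square move keeps the edge count fixed but strictly lowers the number of short internal faces, so the lexicographic pair (number of edges, number of internal faces of length $\le 4$) strictly decreases. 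As this is a pair of nonnegative integers, the process terminates, and the webs at which it halts are exactly the non-elliptic ones. Hence the non-elliptic web invariants of signature $\sigma$ and multidegree $(p,q)$ span $R_\sigma(V)_{(p,q)}$, giving $\dim R_\sigma(V)_{(p,q)} \le N_{\sigma,(p,q)}$, the number of such webs.

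\emph{Linear independence.} It now suffices to show $N_{\sigma,(p,q)} = \dim R_\sigma(V)_{(p,q)}$ for every multidegree. By classical invariant theory the right-hand side is the dimension of an explicit $\mathrm{SL}_3$-invariant multiplicity space inside a tensor product of fundamental representations (the boundary colours and multiplicities specifying the factors), and this dimension has a standard combinatorial description — as a count of dominant lattice paths, or of Littlewood--Richardson--type tableaux. On the other side one sets up a bijection between non-elliptic webs with the given boundary data and exactly these combinatorial objects: to a non-elliptic web $W$ one assigns the sequence of its \emph{minimal cuts} read around the boundary disk, which records a path staying in the dominant cone, and conversely, because a non-elliptic web is rigidly determined by its boundary together with this cut data, every such path comes from a unique $W$. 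This yields $N_{\sigma,(p,q)} = \dim R_\sigma(V)_{(p,q)}$, which together with the spanning bound forces the non-elliptic web invariants to be a basis of each graded piece, hence of $R_\sigma(V)$. An alternative closer to the flow combinatorics of \cite{KK} used later in this paper proves linear independence directly by triangularity: fixing a monomial order on the coordinate monomials in the $x_i(v)$ and $y_j(v)$ appearing in the defining formula for $[D]$, each non-elliptic web invariant $[W]$ has a distinguished dominant monomial coming from its unique minimal-weight flow, and these dominant monomials are pairwise distinct among non-elliptic webs of a fixed signature and multidegree, so the transition matrix to the coordinate-monomial basis is triangular with nonzero diagonal.

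The main obstacle is the linear-independence half. In the dimension-count route it is the claim that $W \mapsto (\text{minimal-cut data})$ is a bijection — that a non-elliptic web is reconstructible from its boundary and its cuts, and that every dominant path is realized; in the triangularity route it is the distinctness of the dominant flows. Both come down to the same rigidity property of non-elliptic webs (the geometry of their iterated cuts, their ``depth'', is forced by the boundary), and this structural analysis is where the real work lies. The spanning half, by contrast, is a routine termination argument with the skein relations.
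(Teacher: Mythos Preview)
The paper does not supply its own proof of this theorem: it is stated with attribution to Kuperberg and used as a black box. So there is no ``paper's proof'' to compare against; your proposal should be read as a sketch of the original argument.

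As such, your outline is on the right track and follows the standard route. A couple of comments. First, in the spanning step your complexity bookkeeping is slightly off: resolving a square via the skein relation of Figure~\ref{skein} does not keep the edge count fixed --- it removes four internal trivalent vertices from that region, so it strictly decreases the number of internal vertices (and edges). A cleaner termination measure is simply the number of internal vertices, which drops under every loop, digon, and square reduction; your conclusion is unaffected. Second, your two linear-independence strategies are exactly the ones that appear in the literature you cite: the bijection between non-elliptic webs and dominant lattice paths is precisely the content of the minimal-cut-path / growth algorithm correspondence (Proposition~1 of \cite{KK}, recalled in this paper in Section~\ref{NCcase}), and the triangularity argument is Theorem~\ref{thm3KK}. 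You correctly identify the substantive point --- that a non-elliptic web is uniquely reconstructible from its boundary and dominant path --- and this is indeed where the work lies; in \cite{KK} it is handled by the growth algorithm being well defined and inverse to minimal cut. So nothing in your plan is wrong, but be aware that the ``rigidity'' step you flag is not a small lemma: it is essentially the whole content of Kuperberg's theorem, and your write-up should not leave the reader thinking it is routine.
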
 
The basis from Theorem ~\ref{webbasis}
is called the {\em web basis} for $R_{a,b}(V).$ From 
Theorem ~\ref{webbasis} it follows that skein relations are 
consistent local relations; hence it is irrelevant
in which order one applies them. 
From Theorem 5.3 in \cite{KK}
one might deduce that Kuperberg's web basis is in fact a $\mathbb Z$-basis
for $R_{a,b}(V).$

To conclude this section let us point out that $R_{a,b}(V)$ is a finitely generated ring,
and the dimension of the vector space
$R_{a,b}(V)$ is infinite for any choice of $a$ and 
$b.$ In general, the Weyl generators represent only a small 
subset of all possible basis vectors.
Nevertheless, every multi homogeneous component of $R_{a,b}(V)$ has a
finite dimensional web basis. All linearly independent web invariants 
spanning a six dimensional multi homogeneous component of $R_{a,b}(V)$ 
are represented in Example 5.3 in \cite{FP}.


\subsection{Two families of Chebyshev polynomials}\label{chebdef12}

In this section, we recall some basic facts about the Chebyshev recursions which become useful when we characterize invariants in $R_{a,b}(V)$ satisfying these recursions.

\begin{de}\label{deC1}
Let $k \in\mathbb Z_{\geq 0}.$
The rescaled 2-variables Chebyshev polynomial of 
the first kind $T_k(x,y)$ is defined by the recurrence
\begin{align*}
T_0(x,y)&=2,\\
T_1(x,y)&=x,\\
T_k(x,y)&=xT_{k-1}(x,y)-yT_{k-2}(x,y).
\end{align*}
\end{de}
\begin{de}\label{ceC1}
Let $k \in\mathbb Z_{\geq 0}.$
The rescaled 2-variables Chebyshev polynomial of 
the second kind $U_k(x,y)$ is defined by the recurrence
\begin{align*}
U_0(x,y)&=1,\\
U_1(x,y)&=x,\\
U_k(x,y)&=xT_{k-1}(x,y)-yT_{k-2}(x,y).
\end{align*}
\end{de}
In the following, we refer to the families $(T_{k})_{k\in\mathbb Z_{\geq0}}$ 
and $(U_{k})_{k\in\mathbb Z_{\geq0}}$
as Chebyshev polynomials.

\begin{prop}\cite[Prop.~2.35]{MSW}\label{monomialCheby}
For all $k\geq1,$  the following identities hold,
\begin{align*}
x^k&=T_k+\binom{k}{1}yT_{k-2}+\dots+\binom{k}{\frac{k-1}{2}}y^{\frac{k-1}{2}}T_1, \textrm{ if $k$ is odd;}\\
x^k&=T_k+\binom{k}{1}yT_{k-2}+\dots+\binom{k}{\frac{k-2}{2}} y^{\frac{k-2}{2}}T_2+\binom{k}{\frac{k}{2}}y^{\frac{k}{2}}T_0,
\textrm{ if $k$ is even.}
\end{align*}
In particular, $x^k$ can be written as a positive integer 
linear combination of the Chebyshev
polynomials of the first type $(T_{k})_{k\in\mathbb Z_{\geq0}}.$
\end{prop}
In a similar way, we deduce the following result for Chebyshev polynomials of the second kind.
\begin{prop}\label{monomialCheby2}
For all $k$ the following identities hold,
$$ U_k+\Bigg\{ \binom{k}{1}-\binom{k}{0}\Bigg\}yU_{k-2}+\dots+\Bigg\{ \binom{k}{\frac{k-1}{2}}-\binom{k}{\frac{k-1}{2}-1}\Bigg\}y^{\frac{k-1}{2}}U_1$$
if $k$ is odd; 
$$ U_k+\Bigg\{ \binom{k}{1}-\binom{k}{0}\Bigg\}yU_{k-2}+\dots+\Bigg\{ \binom{k}{\frac{k}{2}-1}-\binom{k}{\frac{k}{2}-2}\Bigg\}y^{\frac{k-2}{2}}U_2+
\Bigg\{ \binom{k}{\frac{k}{2}}-\binom{k}{\frac{k}{2}-1}\Bigg\}y^{\frac{k}{2}}U_0
$$
if $k$ is even. In particular, $x^k$ can be written as a positive integer
linear combination of the Chebyshev
polynomials of the second type $(U_{k})_{k\in\mathbb Z_{\geq0}}.$
 \qed
\end{prop}

\begin{table}[h]
\begin{align*}
\begin{split}
T_0(x,y)&=2 \\
T_1(x,y)&=x       \\
T_2(x,y)&=x^2-2y  \\
T_3(x,y)&=x^3-3xy \\
T_4(x,y)&=x^4-4x^2y+2y^2  \\
T_5(x,y)&=x^5-5x^3y+5xy^2 \\
\end{split}
\quad\quad
\begin{split}
U_0(x,y)&=1  \\
U_1(x,y)&=x  \\
U_2(x,y)&=x^2-y \\
U_3(x,y)&=x^3-2xy  \\
U_4(x,y)&=x^4-3x^2y+y^2   \\
U_5(x,y)&=x^5-4x^3y+3xy^2 \\
\end{split}
\end{align*}
\caption{The first five Chebyshev polynomials $(T_{k})_{k\in\mathbb Z_{\geq0}}$  and $(U_{k})_{k\in\mathbb Z_{\geq0}}$ in the two variables $x$ and $y.$}
\label{chebtable}
\end{table}

\begin{rem}
The usual Chebyshev polynomials of the first kind $\mathrm{Cheb}_k(x)$
defined by 
$$\mathrm{Cheb}_k(\cos x) = \cos(kx), \textrm{ for } k\in\mathbb Z_{\geq 0}.$$ 
relate to $(T_{k})_{k\in\mathbb Z_{\geq0}}$ by the transformation
$$T_k=2\mathrm{Cheb}_k(\frac{x}{2\sqrt{y}})y^{k/2}.$$
Moreover, the polynomials
$(U_{k})_{k\in\mathbb Z_{\geq0}}$ relate to $(T_{k})_{k\in\mathbb Z_{\geq0}}$ by 
$\frac{ d}{dx}T_k=kU_{k-1}.$ Hence, $U_k)$
relate to the usual Chebyshev polynomials by
$kU_{k-1}=\frac{d}{dx}\big( 2\mathrm{Cheb}_{k}(\frac{x}{2\sqrt{y}})y^{k/2}\big).$
\end{rem}



\section{$\mathrm{SL(V)}$-invariants and Chebyshev polynomials}\label{single-face-diag}
Our next goal is to explain how powers of certain web invariants 
can be written in the web basis of $R_{a,b}(V)$.
We then discuss how modifications of these power 
operations yield Chebyshev recursions.
The modifications we have in mind will be
described in terms of surgeries on tensor diagrams, 
see Sections~\ref{Bracelet and band operations}-~\ref{subsect:band}.
For each such surgery, we ask how the corresponding invariant decomposes in Kuperberg's web basis. The two decompositions we find involve a coefficient variable which we describe in Section~\ref{coeffsub}.\\

The main results of this section are Theorem~\ref{thm10.10} as well as Theorem~\ref{thmC1.2}  and Theorem~\ref{thmC2:2}.


\subsection{Arborization operation on single-face tensor diagrams}

Among the invariants with favorable properties for 
the power operation one finds
the ones defined by (not necessarily planar) 
tree diagrams, as well as the ones defined by
(non-necessarily planar) diagrams with a single internal 
face, see Definition~\ref{single-face} below. The invariants of the first
type were already investigate by Fomin-Pylyavskyy in \cite{FP},  while new results 
about the invariants of the second type will be given in Theorem~\ref{thm10.10}.

To describe these two sets of invariants we recall the operation of 
arborization, first introduced by S.\ Fomin and P.\ Pylyavskyy in \cite[Section 10]{FP}.
This is an algorithm which takes any tensor diagram as inputs and locally changes it by 
applying a sequence of skein transformations to it. These transformations don't 
change the value of the invariant in $R_{a,b}(V)$ but
yield a different presentation of it which facilitates computations, as we will see.

Below we reproduce Fomin-Pylyavskyy's description of the arborization algorithm.
\begin{de}
Let $D$ be a tensor diagram, $s_1$ and $s_2$ two of its internal vertices, and $e_1$
and $e_2$ two edges incident to $s_1$ and $s_2,$ respectively. We call vertices $s_1$ and 
$s_2$ {\em siblings} of each other (more precisely, ``siblings away from $e_1$ and $e_2$'') 
if the following happens. For $i\in\{1, 2\},$ let $B_i$ denote the subgraph of $D$ 
whose edge set consists of those edges which
can be reached from $s_i$ without going along $e_i$ or connecting through a boundary vertex.
(In particular, the edge $e_i$ is not in $B_i.$) We then want $B_1$ and $B_2$ to be isomorphic
binary trees having the same multi set of leaves on the boundary of the disc. Thus, $s_1$
and $s_2$ are siblings if they are obtained from the same multi set of boundary vertices by
the same sequence of taking pairwise joins.
\end{de}

\begin{de}\label{dearb}
Suppose that a tensor diagram $D$ contains a fragment which is:
\begin{itemize}
\item a quadrilateral with one vertex on the boundary, or
\item a four-edge path whose endpoints are siblings of each other, looking away from the edges of the path.
\end{itemize}
an {\em arborization step} is the transformation of such a diagram $D$ as shown in Figure ~\ref{arb}.
\end{de}
\begin{figure}[h]
\centering
\includegraphics[scale=0.3]{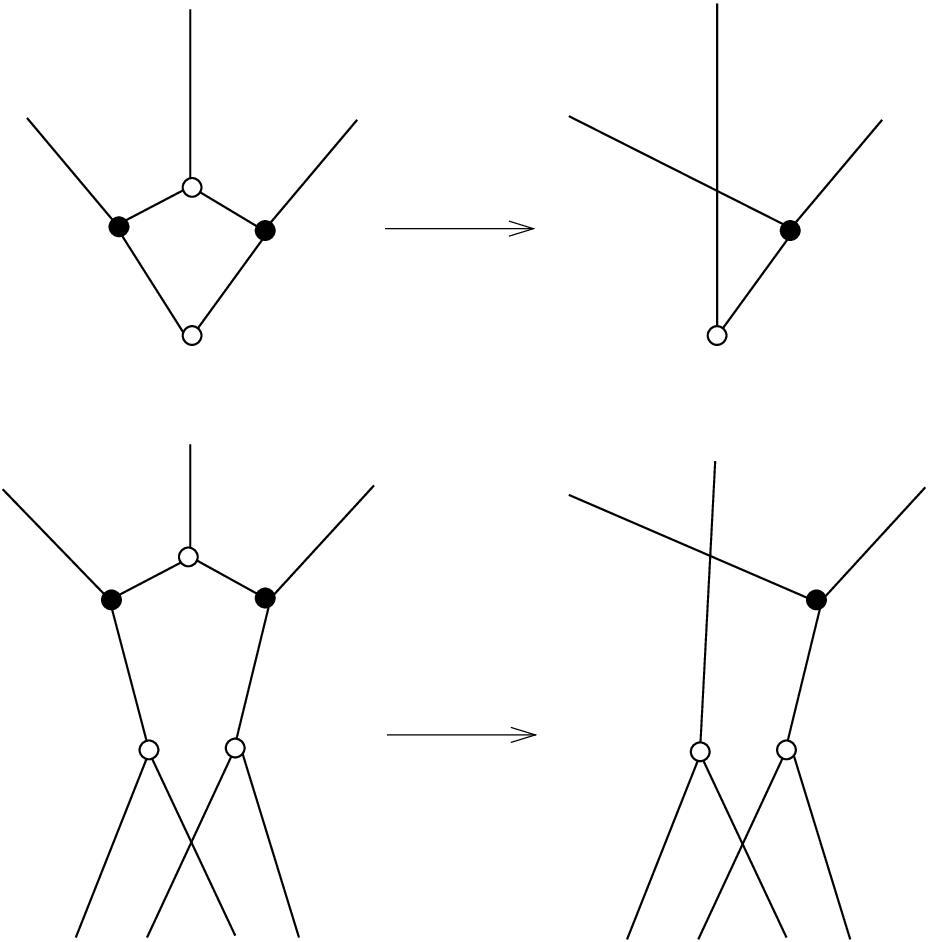}
\caption{Arborizing steps.}\label{arb}
\end{figure}
The {\em arborization algorithm} takes any tensor diagram as input and recursively applies the arborization steps. 
The algorithm stops if no more arborization steps can be performed. 
\begin{lemma}\cite[Lemma.~10.4]{FP}
An arborization step does not change the value of the invariant defined by the tensor diagram.
\end{lemma}
\begin{thm}\cite[Thm.~10.5]{FP}\label{confluent}
The arborization algorithm is confluent. That is, its output does not depend on 
the choice of an arborizing step made at each stage.
\end{thm}

We now turn our attention to two distinguished sets of outputs of the 
arborization algorithm.

\begin{de}\label{single-face}
A {\em single-face diagram} is a tensor diagram 
consisting of a single-face bounded by at least six edges. The vertices of this face
are all internal and possibly with intersecting
tree diagrams attached to it.
\end{de}
Single-face non-elliptic webs might have
further bounded faces passing through
boundary vertices (i.e.\ non-internal faces). 
But every single-face diagram $D$ has a unique
minimal (with respect to the number of vertices) single-face non-elliptic
web as a sub-diagram which we denote by $M=M(D).$ 

In Figure ~\ref{singlecycle}, we give three examples
of single-face non-elliptic webs. On the left, we represent the
minimal (with respect to the number of vertices) single-face non-elliptic. We will call this non-elliptic web the {\em hexagonal web}. 
The single-face non-elliptic web in the middle has the former as  sub-diagram $M$.
The non-elliptic web on the right has a non-internal face bounded by four edges. Of course,
all faces in the non-elliptic webs in the figure could be bounded by more edges. 

\begin{figure}[h]
\begin{center}
                \centering
                \includegraphics[width=13cm]{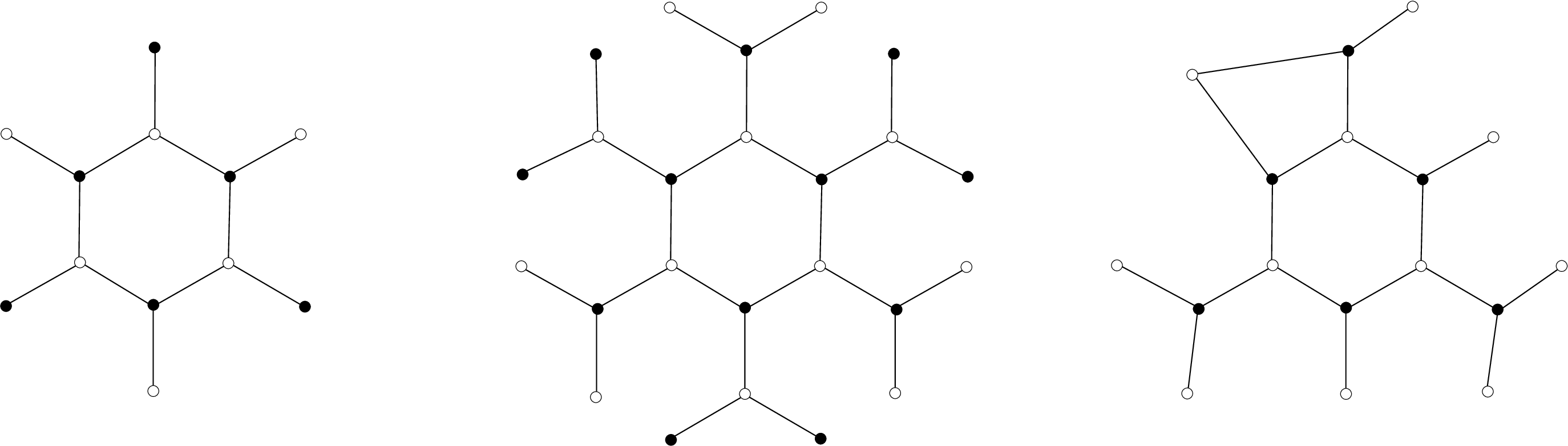}
                \caption{Examples of single-face non-elliptic webs. The arborization 
                algorithm is the identity the first two, while it outputs a
 tree diagram for the single-face web on the right.}\label{singlecycle}
\end{center}
\end{figure}

\begin{figure}[h]
                \centering
         \includegraphics[width=10cm]{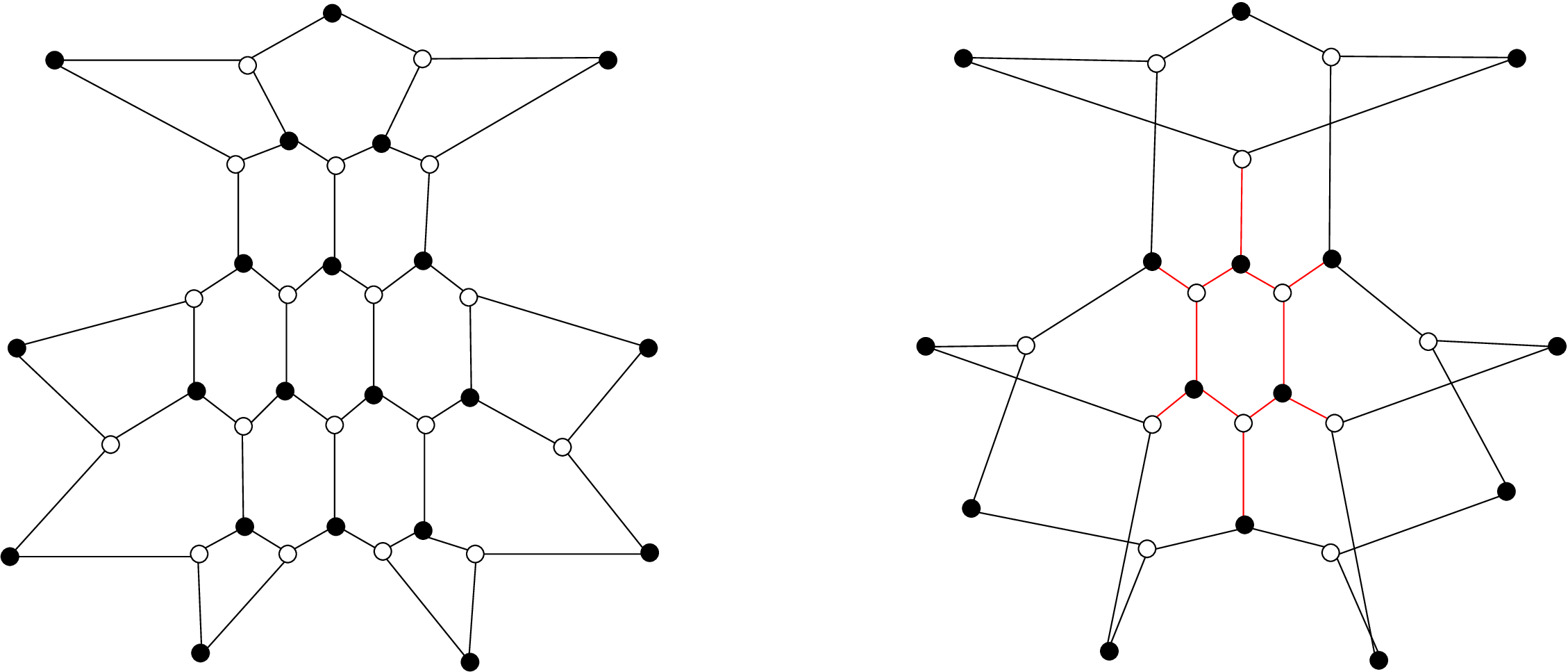}
         \caption{A non-elliptic web $W$ together with its
          arborized form as a single-face diagram $D.$ In red we 
          highlight the minimal single-face non-elliptic web $M$ inside $D.$}\label{arbex}
\end{figure}
Given these notions, we say that a non-elliptic web $W$ 
{\em arborizes to a tree diagram, or a single-face diagram} $D$
if the output of the arborization algorithm is a tree diagram, 
resp.~a single-face diagram. 
Let $\mathcal{T}$, resp.~$\mathcal{S}$, be the set of web invariants in
$R_{a,b}(V)$ defined by non-elliptic webs that arborize to tree, resp.~single-face diagrams.
From Theorem ~\ref{confluent} we deduce that these two sets are disjoint. 
In Figure ~\ref{arbex}, taken from \cite[Fig.\ 31]{FP}, we illustrate
a non-elliptic web (on the left) which arborizes to the single-face
diagram $D$ shown on the right.
\begin{rem}
By work of Chris Fraser \cite[Thm.8.10]{2017arXiv170200385F} we know that 
there are infinitely many web invariants of  $R_{3,9}(V)$  
belonging to the set $\mathcal{S}.$ 
On the other extreme, $\mathcal{S}$ is not empty if and only if $a\neq 0$ and $b \geq 9.$ 
Finally, although remarkably many web invariants in $R_{a,b}(V)$ belong to 
$\mathcal{T}$ or $\mathcal{S},$ not all invariants in
$R_{a,b}(V)$ can, in general, be described in this way. 
This was already observed in \cite[\S.11]{FP}.
\end{rem}


\subsection{Thickening of webs}

As mentioned above, a nice description of 
the expansion of powers $[T]^k,$ for  $[T]\in\mathcal{T}$ and $k\in\mathbb Z_{\geq0},$ in the web basis
was given by S.\ Fomin and P.\ Pylyavskyy \cite{FP}, see Theorem ~\ref{thm10.9} below.
The aim of this section is to show that the same description also applies
to powers of web invariants in $\mathcal{S}.$

The next definition is taken from \cite{FP}.
\begin{de}\label{thickening}
Let $k$ be a positive integer and $W$ a non-elliptic web.
The {\em $k$-thickening} of $W$ is obtained as follows:
\begin{itemize}
\item replace each internal vertex of $W$ by a ``honeycomb'' fragment $H_k$ of the appropriate
color,
as shown in Figure ~\ref{honey} (boundary vertices stay put);
\item replace each edge of $W$ by a $k$-tuple of edges 
connecting the corresponding honeycombs and / or boundary vertices.
\end{itemize}
\end{de}

\begin{figure}[h]
        \psfragscanon
\psfrag{1}{$H_1$}
\psfrag{2}{$H_2$}
\psfrag{3}{$H_3$}
\psfrag{4}{$H_4$}
\psfrag{5}{$H_5$}
                \includegraphics[width=10cm]{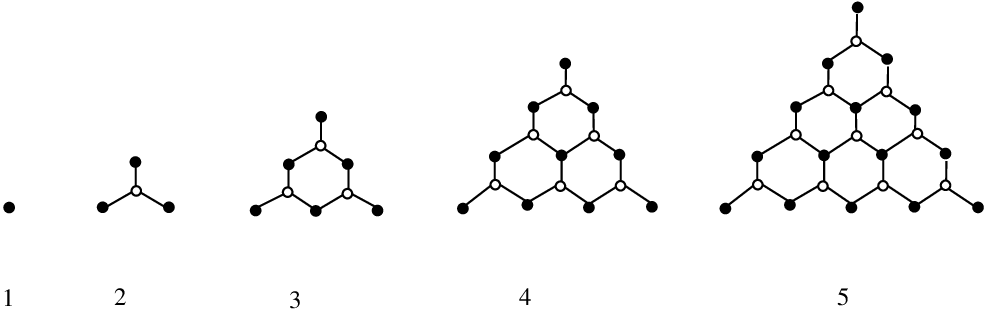}   
                \centering
        \caption{Honeycomb fragments for $k=1,2,3,4,5.$} \label{honey}
\end{figure}

In the sequel, we denote the $k$-thickening of $W$ by $\mathrm{thick}_k(W).$ It's
full unclasping will be denoted by $\mathrm{Thick}_k(W).$

\begin{thm}\cite[Thm.\ 10.9]{FP}\label{thm10.9}
Let $[W]$ be a web invariant in $\mathcal{T}.$
Then each power $[W]^k$ is a web invariant defined
by the $k$-thickening of $W.$
\end{thm}
In a similar fashion we show the following result.
\begin{thm}\label{thm10.10}
Let $[W]$ be a web invariant in $\mathcal{S}.$
Then each power $[W]^k$ is a web invariant defined
by the $k$-thickening of $W.$
\end{thm}
\begin{proof}
Assume $W$ arborizes to a single-face diagram $D.$
Let $M$ be the minimal single-face non-elliptic web of $D.$
To prove the result one can follow the same steps 
as in the proof of Theorem ~\ref{thm10.9}. That is:
one reverses the arborization process, starting with $D$ 
and step by step planarizing it. 
Notice that $M$ is planar by definition.
Hence, to planarize $D$ is the same as planarizing the
tree portions attached to $M$ patterned after some tree sub-diagrams of $D.$ 
To obtain $z^k$ one then
superimposes $k$ copies of $D,$ connected only at the boundary vertices. 
As before we denote this tensor diagram by $D^k.$ 
To planarize $D^k$ one follows the same steps as for $D$, 
together with planarizing copies of $M$. 
Since the order in which we use skein relations does not matter,
one can planarize $D^k$ moving from the $k$-copies
of internal vertices of the $k$-copies of $M$ towards the boundary.
Since this is the same as planarizing the $k$-tree portions attached 
to $k$-copies of $M$ the claim follows.

We illustrate how this works using the example in Figure ~\ref{kolk}.
The picture on the left shows the superimposition of
three copies of a fragment of a single-face diagram $D$ obtained through arborization.
The six white vertices at the bottom of the figure correspond to sibling vertices in $D.$ Boundary vertices are now drawn.
Tensor sub-diagram attached below those vertices are identical copies of subtrees attached to $D.$
In the second figure, copies of internal vertices
of $D$ and intersection points have been planarized.
At each stage of the planarization process, all but one 
term vanish, as in the proof of Theorem ~\ref{thm10.9}.
To clarify this point, notice that eventually the remaining terms have a square
bounded by to edges connecting to the same boundary vertex off the tensor diagram  $D^k.$
Hence the terms vanish by skein relation $(e).$
\end{proof}

From Theorem ~\ref{thm10.10}  and the definition of honeycomb fragments we deduce the following 
properties of web invariants in $\mathcal{T}\sqcup\mathcal{S}.$
\begin{cor}\label{corthick}
Let $[W]$ be a web invariant in $\mathcal{T}\sqcup\mathcal{S}$ defined by
the non-elliptic web $W.$
Then for all $k,l \in\mathbb Z_{\geq 0}$ the following identities hold,
$$
[W]^{kl}=[\mathrm{thick}_k(W)][\mathrm{thick}_l(W)]=[\mathrm{thick}_{kl}(W)]=[\mathrm{thick}_k(\mathrm{thick}_l(W))]
$$
in $R_{\sigma(W)}(V).$\qed
\end{cor}

It is tempting to believe that the proof of Theorem ~\ref{thm10.10} applies to all webs
that arborize to a diagram having a non-elliptic web as a sub-diagram.
But this is wrong, as one can see in  the example treated in Figure 33 in \cite{FP}.
Hence, Theorem ~\ref{thm10.9} and Theorem ~\ref{thm10.10} imply that the class of invariants in
$\mathcal{T}\sqcup\mathcal{S}$ are very distinguished from all other invariants.

\begin{figure}[h]
                \psfragscanon
                \psfrag{c}{$W^3$}
                \psfrag{g}{$\mathrm{thick}_3(W)$}
                \includegraphics[width=15cm]{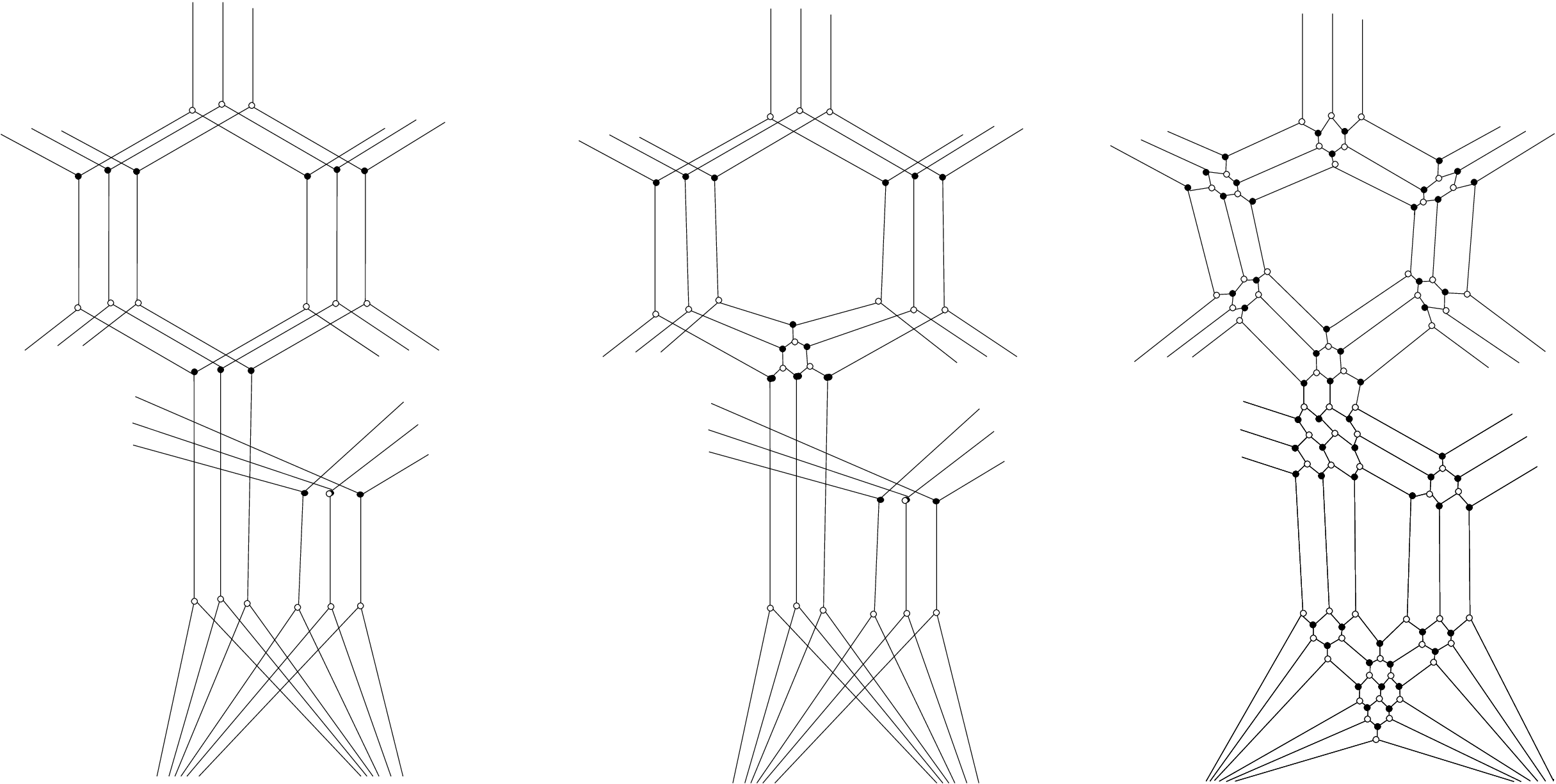}
                \centering
                \caption{Planarization of copies of a single-cycle diagram 
                outputted by the arborization algorithm.}\label{kolk}
\end{figure}


\subsection{A coefficient variable}\label{coeffsub}

Let $W$ be a non-elliptic web that arborizes to the single-face diagram $D.$
Let $M$ the minimal non-elliptic web inside $D.$

\begin{de} \label{part}
The non-elliptic web $b(W)$ is obtained from 
joining successive 
vertices serving as endpoints of $\mathrm{Thick}_2(M)$ inside
$\mathrm{thick}_2(W)$  and
then erasing $\mathrm{Thick}_2(M).$ 
We call $b(W)$ the {\em coefficient} of $W.$
\end{de}
The invariant defined by $b(W)$ will later play the role of 
a coefficient variable, see Theorem ~\ref{thmC1.2} and Theorem ~\ref{thmC2:2}.
The full unclasping of $[b(W)]$ is denoted by $[B(W)].$

In Figure ~\ref{coefffig} we illustrate Definition ~\ref{part} on two examples. The 
two coefficients are associated to the two single-face non-elliptic webs 
(not shown) represented on the left of Figure ~\ref{singlecycle}.
 
\begin{figure}[h]
\psfragscanon
\psfrag{=}{=}
 \includegraphics[width=10cm]{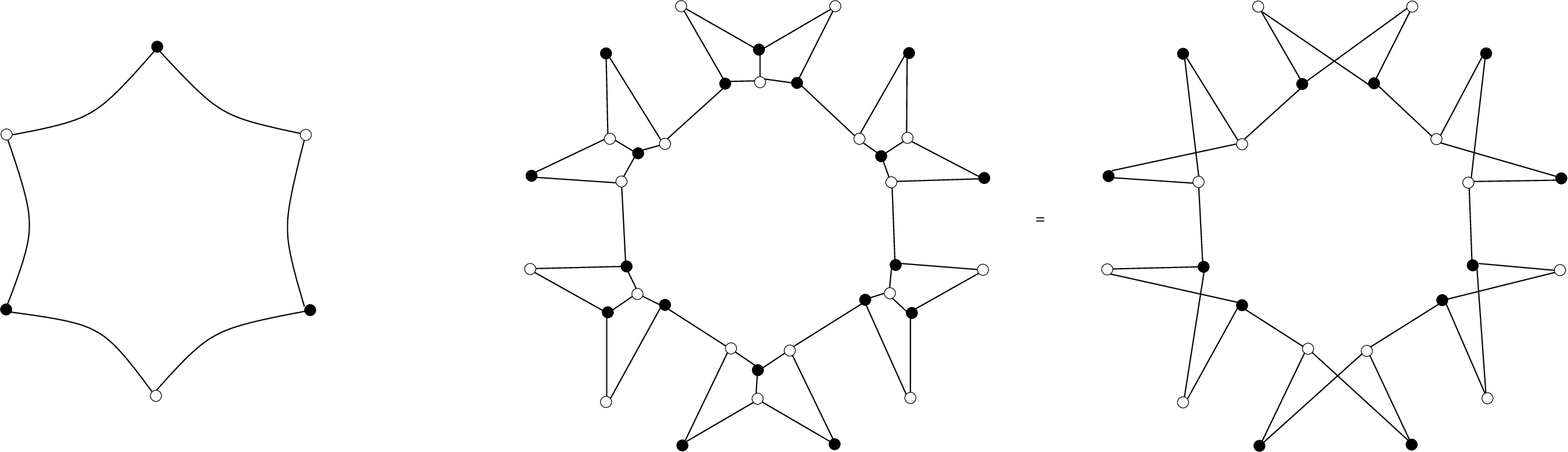}
 \centering
 \caption{Two coefficients $[b(W)]$.}\label{coefffig}
\end{figure}


\subsection{Surgeries on tensor diagrams}\label{Bracelet and band operations}

\begin{figure}[h]
      \psfragscanon
      \psfrag{t}[][]{$[\mathrm{brac}_3(W)]$}
      \psfrag{b}[][]{$[\mathrm{band}_3(W)]$}
\includegraphics[width=\textwidth]{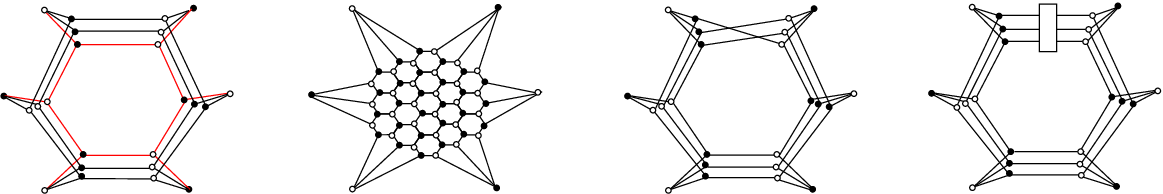}
\centering
\caption{Tensor diagrams defining
the invariants $[W]^3,$ $[\mathrm{thick}_3(W)],$ $[\mathrm{brac}_3(W)]$ and $[\mathrm{band}_3(W)]$ in $R_{\sigma(W)}(V)$
where $W$ is in red.}\label{twist2}
\end{figure} 

So far, we have focused on the properties
of powers of the two families of web invariants $\mathcal{T}\sqcup\mathcal{S}$. 
Our next goal is to introduce two new operations
obtained by distinguished local surgeries in the superimposition of copies 
of a single-face diagram $D$.

Throughout, let $k\in\mathbb Z_{\geq 0},$  let
$E$ be an edge bounding the internal face of $D.$
Let $D^k$ be the tensor diagram 
obtained by superimposing $k$-copies of $D.$ 

\begin{de}
A {\em surgery on $D^k$ at $E$} is an operation which simultaneously removes 
$k$ copies $E$ in $D^k$ and inserts another configuration of $k$-edges. The 
boundary points of E remain unchanged. 
\end{de}
Of course, surgeries  on $D^k$ could be defined at any edge, but for the applications
we have in mind, we restrict our self to the case where $E$ is always an edge bounding the 
single-face of $D.$ We then say that the surgery takes place 
in the thickening of the single-face of $D.$ The 
tensor diagram obtained after such surgery will again be
considered up to isotopy, fixing boundary points.
Moreover, a surgery in $D^{k}$ which inserts two disjoint fragments involving $l,$ resp.~ $k-l,$ $k\leq l$ copies of $E,$
can equally be seen, as a surgery inside the superimposition of $D^{l}$ with $D^{k-l},$ 
connecting only at the boundary vertices.


\subsection{Bracelet operations and Chebyshev polynomials of the first type} \label{subsect:brac}

\begin{figure}[h]
\begin{subfigure}[b]{0.3\textwidth}
                \psfragscanon
                \psfrag{i}{$\vdots$}
                  \centering
          \includegraphics[height=1.8cm,width=1.5cm]{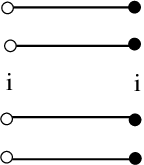}\\
                \caption{$k$- copies of an edge.}
\end{subfigure}
\begin{subfigure}[b]{0.3\textwidth}
        \psfragscanon
                \psfrag{i}{$\vdots$}
                  \centering
          \includegraphics[height=1.8cm,width=1.5cm]{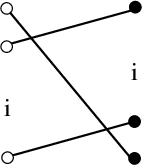}\\
                \caption{$(k-1)$-crossing.}
\end{subfigure}
\begin{subfigure}[b]{0.3\textwidth}
                \psfragscanon
                \psfrag{i}{$\vdots$}
                  \centering
          \includegraphics[height=1.8cm,width=1.5cm]{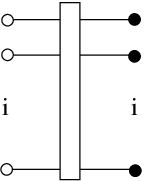}\\
                \caption{ $k$-band.}
\end{subfigure}
\caption{}\label{ins}
\end{figure}

\begin{de}\label{def crossing}
Let $s_1 \neq s_2\in\{\circ,\bullet\}, s_1\neq s_2.$ 
A {\em $(k-1)$-crossing} is a tensor diagram fragment
consisting of $k$ equally colored vertices, 
$(v_1,v_2,\dots,v_k)\in\{s_1\}^k,$ and $k$ equally colored vertices of the opposite color,
$(t_1,t_2,\dots,t_k)\in\{s_2\}^k,$
and edges connecting these vertices according to the following rule:
$v_i$ connects to $t_{i-1},$ $(\mathrm{mod} k)$ for all $1\leq i\leq k.$ 
\end{de}
In Figure ~\ref{ins}(B) an example of a $(k-1)$-crossing is provided.
Let us point out that in all the next
pictures, we always assume that
the illustrated tensor diagram fragments have 
an arbitrary number of pairs of
white and black vertices. The definitions and proofs are 
symmetric in the color of the vertices.

\begin{de}
The {\em $k$-bracelet operation} is the surgery on $D^k$ 
which replaces $k$-copies of $E$ with the $(k-1)$-crossing fragment 
whose boundary vertices are 
appropriately colored.
\end{de} 
The $k$-bracelet operation of $D$ will be denoted by $\mathrm{brac}_k(D).$
The full unclasping of $\mathrm{brac}_k(D)$ will be denoted by $\mathrm{Brac}_k(D).$ 
Then $\mathrm{brac}_1(D)=\mathrm{Brac}_k(D)=D$ and we set $\mathrm{brac}_0(D)=2.$
Clearly, the invariants $[\mathrm{brac}_k(D)]\in R_{\sigma(D)}(V),$ resp.~$[\mathrm{Thick}_k(D)]\in R_{\sigma(\mathrm{Brac}_k(D))}$  are 
independent on the choice of the edge $E$ bounding the single-face of $D$ and the order in which one
superimposes the $k$ copies of $D$ (with connected boundary vertices). 

\begin{de}\label{de:bracelet}
Let $W$ be a web that arborizes to a single-face diagram $D.$ Then
$$
\mathrm{brac}_k(W)=\mathrm{brac}_k(D).
$$
\end{de}
In Figure ~\ref{twist2} an example of the $3$-bracelet 
operation for the hexagonal web drawn on the
left of Figure ~\ref{singlecycle} is provided.

\begin{rem}
Definition ~\ref{de:bracelet} is an adaptation of 
the bracelet power defined in skein algebras by D.\ Thurston in 
\cite{Thurston} and in Riemann surfaces with boundary by G.\ Musiker, R.\ Schiffler and L.\ Williams in \cite{MSW}.
\end{rem}

\begin{lemma}\label{let}
The following local identities always hold:\\
\begin{itemize}
\item[(a)]\par
\begin{minipage}[t]{4cm}
 \end{minipage}

\begin{minipage}{\linewidth}
\centering
\psfragscanon
\psfrag{v}{\vdots}
\psfrag{=}{$=$}
\psfrag{+}{$+$}
\includegraphics[height=2cm]{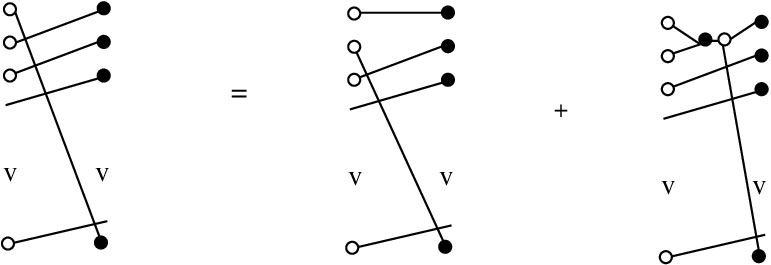}
\end{minipage}

\item[(b)]\par
\begin{minipage}[t]{4cm}
 \end{minipage}
 
\begin{minipage}{\linewidth}
\centering
\psfragscanon
\psfrag{v}{\vdots}
\psfrag{=}{$=$}
\psfrag{d}{\dots}
\psfrag{+}{$+$}
\includegraphics[height=2cm]{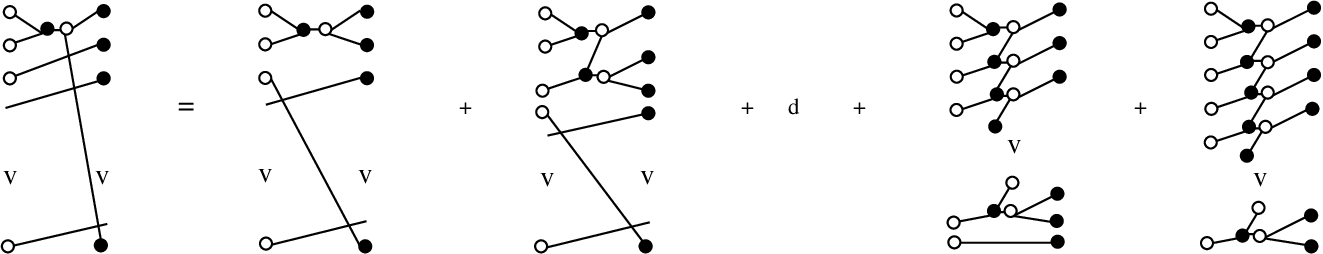}
\end{minipage}
\end{itemize}
\qed
\end{lemma}

To state the next results, we introduce a further
piece of notation. 
Consider the surgery of $D^k$ at $E,$ which removes $i$-parallel edges $E,$ for $2\leq i\leq k,$
and inserts the $(i-1)$-th summand on the right-hand side of relation $(b)$ in Lemma ~\ref{let}.
Let us denote the resulting tensor diagram by $B_{i}\overline{\mathrm{brac}}_{k-i}(W)$, where $B_{i}$
denotes the planar fragment with no crossing edges and $\overline{\mathrm{brac}}_{k-i}$ is the same transformation
as $\mathrm{brac}_{k-i}$, except that here $\overline{\mathrm{brac}}_{0}=1.$
For instance, $B_2$ is
obtained from $D^2$, by removing two parallel edges $E$ and inserting an 
$H$ tensor diagram piece (in red in Figure~\ref{s1}).
$B_3$ is obtained by inserting two $H$ pieces connected along one edge (in blue in Figure~\ref{s2}), and so on.

\begin{lemma}\label{let2}
Let $W$ be a web that arborizes to a single-face diagram $D.$
The following local identity holds:
\begin{itemize}
\item[(c)]
\par
\begin{minipage}[t]{4cm}
 \end{minipage}
\begin{minipage}{\linewidth}
\psfragscanon
\psfrag{v}{\vdots}
\psfrag{=}{$=$}
\psfrag{a}{$-2 b(W)$}
\psfrag{d}{\dots}
\includegraphics[height=2cm]{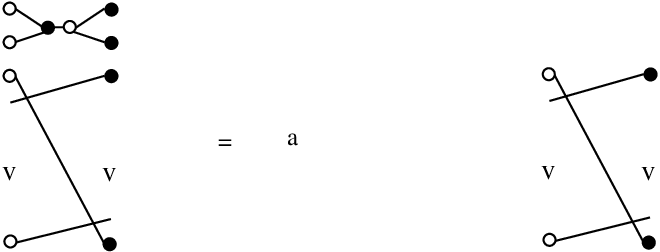}
\centering
\end{minipage}
\end{itemize}
when the surgery involves edges bounding the single-face of $D.$
\end{lemma}
\begin{proof} 
In the notation introduced before, relation $(c)$ reads as 
$$B_{2}\overline{\mathrm{brac}}_{k-2}(W)=-2 b(W)\overline{\mathrm{brac}}_{k-2}(W).$$
To prove this equality we show that $B_{2}=-2 b(W)$, and
argue with the example given in Figure ~\ref{s1}.

In Figure ~\ref{s1}, we represent a fragment of $B_2$ obtained from
$D^2$ by planarizing around the single-face of $D$. 
At each boundary vertex of the illustrated fragment $B_2$
there could be attached copies of tree tensor sub-diagrams of $D$, as in Figure \ref{kolk}. The subsequent figures represent intermediate
reduction steps obtained with skein relation $(b)$. 
At each reduction step, also a second non-elliptic arises (not drawn). 
This term always vanishes by the same argument used in the proof of Theorem ~\ref{thm10.10}.
Recall that by Theorem \ref{webbasis} we know that
the order in which one performs skein relations does not
change the result.

After having reduced all internal square faces one 
is left with the picture on the right in Figure~\ref{s1}.
Reducing with skein relation $(c)$ yields the coefficient $-2$ and the non-elliptic web 
$b(W)$ and proves the claim. 

It is easy to see that the above example
extends to single-face diagrams $D$ with an internal face bounded by more than six edges.
Moreover, one can argue in the same way in the presence of the term
$\overline{\mathrm{brac}}_{k-2}(W)$, since $B_{2}$ is then superimposed to $\overline{\mathrm{brac}}_{k-2}(W)$ and the order of superimposition of tensor diagrams does not matter, see Corollary~\ref{corthick}.

Finally, let us point out that a necessary condition to make use of skein relation $(c)$ in the last step of the proof is
is that the surgery of $D^2$ takes place at a copy of an edge bounding the single-face. 
\end{proof}

\begin{figure}[ht]
\psfragscanon
\psfrag{dots}{$\dots$}
\includegraphics[width=13cm]{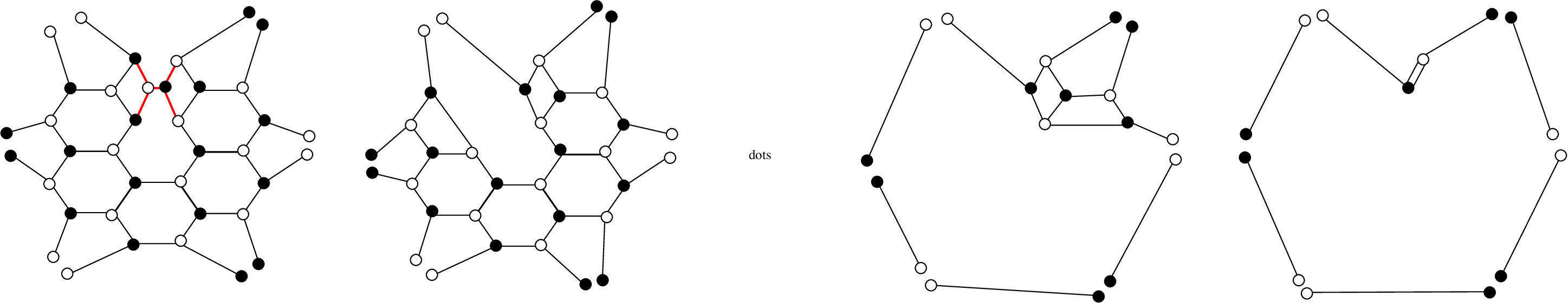}
\caption{Moving squares inside $D^2$.}\label{s1}
\end{figure} 

\begin{lemma}\label{let2d}
Let $W$ be a web that arborizes to a single-face diagram $D.$
The following local identities hold:
\begin{itemize}
\item[(d)]
\begin{minipage}[t]{4cm}
 \end{minipage} \par \par
\begin{minipage}{\linewidth}
\psfragscanon
\psfrag{v}{\vdots}
\psfrag{=}{$=$}
\psfrag{d}{\dots}
\psfrag{+}{$+$}
\psfrag{c}{$b(W)$}
\includegraphics[height=5cm]{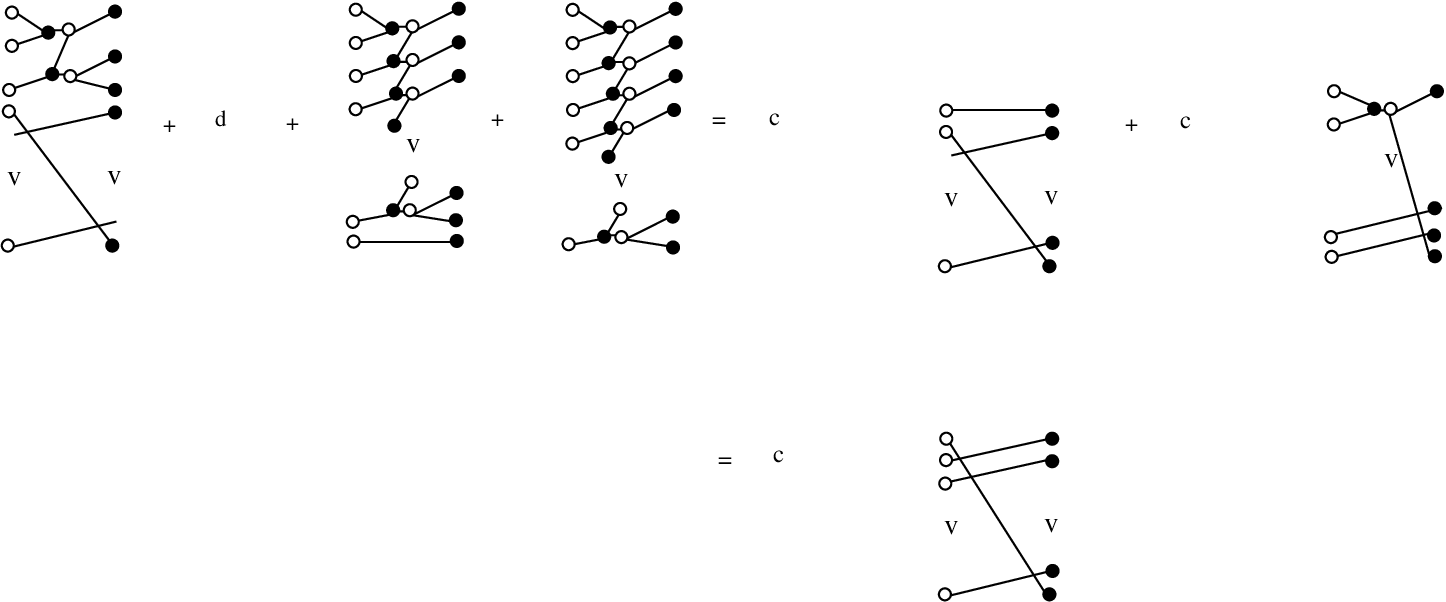}
\centering
\end{minipage}
\end{itemize}
when the surgery involves edges bounding the single-face of $D.$
\end{lemma}
\begin{proof}

To prove the first equality, we split the 
summands on the left and right of the equation
into two sets. The first sets consists of the first summand
$B_3\overline{\mathrm{brac}}_{k-3}(W)$ on the left and
the first summand on the right of the equality.
The second set contains the other terms.
We then first show that:
\begin{align}\label{eq:let2}
B_3\overline{\mathrm{brac}}_{k-3}(W)= b(W)W\overline{\mathrm{brac}}_{k-3}(W).
\end{align}
To prove this claim we use the example given in Figure~\ref{s2}.
On the left of it we represent a fragment of the tensor diagram $B_{3}$ associate to $D$ as described in the previous result. 
We then indicate intermediate
reduction steps obtained with skein relation $(b)$, in the subsequent figures. 
This then shows the equality in Equation \eqref{eq:let2}. 
As above, the order in which one performs skein relation $(b)$ does not matter. For more general single-face diagrams $D$ one then deduces the claim arguing as in Lemma~\ref{let2}.

Second, we consider the subsequent summands 
$B_{i}\overline{\mathrm{brac}}_{k-i}(W),$ $4\leq i\leq k$, in Equation $(d)$. 
Each non-elliptic web corresponding to $B_{i}$ in these summands has two square faces. 
Subsequently reducing the outer square face in each term
yields the following transformation:
$$
\sum_{4\leq i\leq k}B_{i}\overline{\mathrm{brac}}_{k-i}(W)
\mapsto
\sum_{4\leq i\leq k}b(W)B_{i-2}\overline{\mathrm{brac}}_{k-i-2}(W).$$
But 
$b(W)\sum_{2\leq j\leq k-2}B_{j}\overline{\mathrm{brac}}_{k-j}(W)$ 
is equal to the second summand, on the right of the first equality in Equation (d), involving $k-2$ univalent pairs of vertices,  by Lemma ~\ref{let} Part $(b)$. 
This thus proves the first equality in claim $(d)$.

The last equality in claim $(d)$ follows using Lemma ~\ref{let} Part $(a).$
\end{proof}
\begin{figure}[t]
\psfragscanon
\psfrag{dots}{$\dots$}
\includegraphics[width=\textwidth]{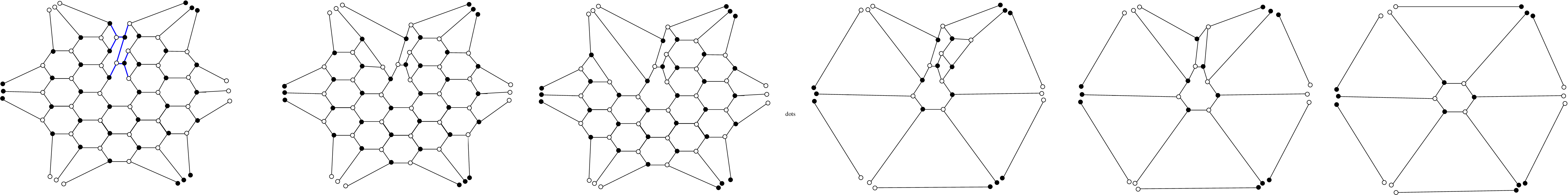}
\caption{Moving squares inside $D^3$.}\label{s2}
\end{figure} 

Given these results, Theorem \ref{thmC1.2} 
follows by an induction argument, together with Lemma ~\ref{let} part $(a)$ followed by part $(b)$ and followed by part $(c)$ and $(d)$ in Lemma  ~\ref{let2}.
\begin{thm}\label{thmC1.2}
Let $k\in\mathbb Z_{\geq 0}$ and $W$ be a non-elliptic web that arborizes to a single-face diagram. 
Then, the $k$-bracelet operation of $W$ transforms the invariants as follows:
$$
[\mathrm{brac}_k(W)]=T_k([W],[b(W)])
$$
where $(T_{k})_{k\in\mathbb Z_{\geq0}}$ is the rescaled Chebyshev polynomial of 
the first kind. \qed
\end{thm}
Keeping the assumptions of Thm. ~\ref{thmC1.2} the next result  
can be  deduced by Prop. ~\ref{monomialCheby}.
\begin{cor}\label{corcheb1}
Let $k\in\mathbb Z_{>0}.$ The following identities hold, 
\begin{align*}
[W]^k&=T_k+\binom{k}{1}[b(W)] T_{k-2}+\dots+\binom{k}{(k-1)/2}[b(W)]^{(k-1)/2}T_1,\textrm{ if $k$ is odd;}\\
[W]^k&=T_k+\binom{k}{1}[b(W)] T_{k-2}+\dots+\binom{k}{(k-2)/2}[b(W)]^{(k-2)/2}T_2+\binom{k}{k/2}[b(W)]^{k/2}
\end{align*}
if $k$ is even. 
In particular, $[W]^k$ can be written as a positive linear 
combination of the Chebyshev polynomials of the first kind
$(T_{k})_{k\in\mathbb Z_{\geq0}}$. 
\qed
\end{cor}

We wish to finish this section by pointing out that the results in this section were partially discussed in the author's thesis, see \cite{Lamberti}.

\subsection{Band operation and Chebyshev polynomials of the second type}\label{subsect:band}
Earlier we saw how inserting a 
crossing fragment inside $D^k$
affects the decomposition of the corresponding invariant
in the web basis.
Instead of considering one specific additional
crossing of $k$ edges in $D^k,$
one might consider all possible ways 
of crossings $k$ edges. This 
gives rise to the second transformation of $D^k,$ 
which we will now describe.

\begin{de}
Let $s_1\neq s_2\in\{\circ,\bullet\}.$
A {\em $k$-band} is an average of a formal linear combination of 
$k\textrm{!}$ tensor diagram fragments. Each summand represents a 
way of connecting 
$k$ equally colored vertices, 
$(v_1,v_2,\dots,v_k)\in\{s_1\}^k,$ with 
$k$ equally colored vertices of the opposite color,
$(t_1,t_2,\dots,t_k)\in\{s_2\}^k.$ 
\end{de}
We indicate the $k$-band by a box over $k$-edges, as drawn in Figure ~\ref{ins}(C) and
label the box by a number if we want to
specify the number of edges passing through the box.

\begin{de}
The {\em $k$-band operation} is the surgery of $D^k$ which replaces
$k$ copies of $E$ with a $k$-band fragment of the appropriate color.
\end{de}
The $k$-band operation of $D$ is denoted by $\mathrm{band}_k(D).$ 
The full unclasping of $\mathrm{band}_k(D),$ is denoted by $\mathrm{Band}_k(D).$ 
Then  $\mathrm{band}_1(D)=\mathrm{Band}_1(D)=D$ and we set $\mathrm{band}_0(D)=1.$ 
As for the bracelet operation also the 
invariants $[\mathrm{band}_k(D)]\in R_{\sigma(D)}(V)$ resp.~ $[\mathrm{Band}_k(D)]\in R_{\sigma(\mathrm{Thick}_k(D))}(V)$  are 
independent on the choice of the edge $E$ bounding the single-face of $D,$ and
on the order one superimposes the $k$ copies of $D$ (with connected boundary vertices). 

\begin{de}\label{def band}
Let $W$ be a web that arborizes to a single-face diagram $D.$
Then 
$$
\mathrm{band}_k(W):=\mathrm{band}_k(D).
$$
\end{de}
On the right of Figure ~\ref{twist2} an example the band operation is provided
for the hexagonal web on the left of Figure ~\ref{singlecycle} and for $k=3.$
\begin{rem}
The band operation we defined above is and adaptation of the
band power defined  in skein algebras, by D.\ Thurston in 
\cite{Thurston}. 
\end{rem}

\begin{lemma}\label{le_u1}
The following local identities hold:


\begin{itemize}
\item[(a)]\par
\begin{minipage}[t]{4cm}
\end{minipage}
 
\begin{minipage}{\linewidth}
\psfragscanon
\psfrag{n}[][c][0.7]{$n$}
\psfrag{m}[][c][0.7]{$m$}
\psfrag{=}{$=$}
\psfrag{+}{$+$}
\includegraphics[height=2cm]{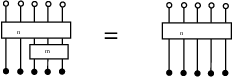}
\centering
\end{minipage}\vspace{0.5cm}

\item[(b)]\par
\begin{minipage}[t]{4cm}
\end{minipage}
\begin{minipage}{\linewidth}
\psfragscanon
\psfrag{a}{$\frac{1}{n}$}
\psfrag{n}[][r][0.7]{$n$}
\psfrag{n-1}[][r][0.7]{$n-1$}
\psfrag{b}{$\frac{n-1}{n}$}
\psfrag{c}{$\frac{1}{n}$}
\psfrag{d}{$\frac{n-1}{n}$}
\psfrag{m}[][r][0.7]{$m$}
\psfrag{=}{$=$}
\psfrag{+}{$+$}
\includegraphics[height=5cm]{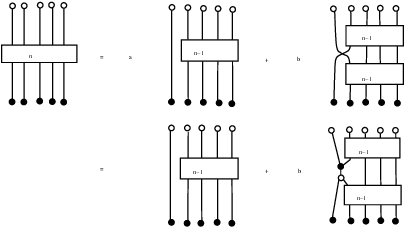}
\centering
\end{minipage}
\end{itemize}
\end{lemma}
\begin{proof}
The claim follows immediately from D.\ Thurston's identities for band powers in skein algebras, see \cite[Lemma 4.6]{Thurston} adapted to the context of tensor diagrams.

For part $(a)$: averaging once is the same as averaging twice.
For $(b)$: if one averages over $\mathfrak{S}_n,$ there is an edge
joining the first vertex above the box with the first vertex below the box with probability $1/n.$
The first vertex is connected with any other vertex below the box with probability
$(n-1)/n.$ These two probabilities
correspond to the coefficients
in front of the tensor diagrams on the right side of the equation. 
Applying skein relation $(a)$ for tensor diagrams
on the crossing yields the second equality.
\end{proof}
\begin{rem}
Part $(b)$ in Lemma ~\ref{le_u1} is the standard
definition of {\em boxes over oriented $n$-strands}
as described by T.\ Ohtsuki in \cite[Appendix B.2]{MR1881401} and by H.\ Wenzl in \cite{MR873400}. In particular,
the $k$-band is a special case of the $A_2$ 
internal clasp considered by G.\ Kuperberg in \cite[\S.7]{Kuperberg}.
With that definition, part $(a)$ follows from T.\ Ohtsuki and S.\ Yamada's work 
 \cite{MR1457194}, see also \cite[Appendix B.2,(B.8)]{MR1881401}.
\end{rem}

We now show how the last summand in Lemma ~\ref{le_u1} 
relation $(b)$ simplifies further.
For this relation to hold, a necessary condition is to view the above relation in 
a specific fragment of the thickening of a single-face non-elliptic web $W.$

\begin{lemma}\label{le_u2}
Let $W$ be a web that arborizes to a single-face diagram $D.$
Then the following local identities hold:\\

\begin{minipage}[t]{\linewidth}
\psfragscanon
\psfrag{n}[][r][0.7]{$n$}
\psfrag{n-1}[][r][0.7]{$n-1$}
\psfrag{d}{$\frac{n-1}{n}$}
\psfrag{m}{$m$}
\psfrag{=}{$=$}
\psfrag{a}{$+$}
\includegraphics[height=2cm]{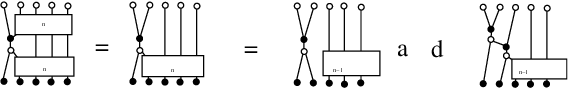}
\centering
\end{minipage}
in the thickening of the single-face of $D.$ \qed
\end{lemma}

\begin{thm}\label{thmC2:2}
Let $k\in\mathbb Z_{> 0}$ and let $W$ be a non-elliptic web that arborizes to a single-face diagram.
Then, the $k$-band operation of $W$ transforms the invariants as follows:
$$
[\mathrm{ band}_k(W)]=U_k([W],[b(W)])
$$
where $(U_{k})_{k\in\mathbb Z_{\geq0}}$ is the rescaled Chebyshev polynomial of 
the second kind.
\end{thm}
\begin{proof}
Proceed by induction. For small $n$ the claim can be checked directly.
For $n>2,$ the claim can be deduce from Lemma ~\ref{le_u1} part $(b),$ followed 
by part $(c)$ in Lemma ~\ref{le_u2} and solving the squares in the two summands of part $(c)$
following the reasoning of Lemma ~\ref{let} parts $(c)$ and $(d).$
\end{proof}

Keeping the assumptions of Theorem ~\ref{thmC2:2} we deduce the next result with 
Prop. ~\ref{monomialCheby2}.
\begin{cor}\label{corcheb2}
For all $k\geq 0$ the following identities hold,
$$ [W]^k=U_k+\Bigg\{ \binom{k}{1}-\binom{k}{0}\Bigg\}[b(W)]U_{k-2}+\dots+\Bigg\{ \binom{k}{\frac{k-1}{2}}-\binom{k}{\frac{k-1}{2}-1}\Bigg\}[b(W)]^{\frac{k-1}{2}}U_1$$
if $k$ is odd.
\begin{align*}
[W]^k=U_k+&\Bigg\{ \binom{k}{1}-\binom{k}{0}\Bigg\}[b(W)]U_{k-2}+\dots+\\
&\Bigg\{ \binom{k}{\frac{k}{2}-1}-\binom{k}{\frac{k}{2}-2}\Bigg\}[b(W)]^{\frac{k-2}{2}}U_2
+\Bigg\{\binom{k}{\frac{k}{2}}-\binom{k}{\frac{k}{2}-1}\Bigg\}[b(W)]^{\frac{k}{2}}U_0\\
\end{align*}
if $k$ is even. In particular, $[W]^k$ can be written as a positive linear 
combination of the Chebyshev polynomials of the second kind
$(U_{k})_{k\in\mathbb Z_{\geq0}}$\qed
\end{cor}

\section{Preliminaries on the $U_q(\mathfrak{sl}_3)$-invariant space $\mathrm{Inv}(V^{\sigma_n})$}\label{sect:inv}
In the rest of the paper, our goal is to test the canonical properties of the recursions described in Theorem ~\ref{thmC1.2} and Theorem ~\ref{thmC2:2}. 
To do so we link $(T_{k})_{k\in\mathbb Z_{\geq0}}$ and $(U_{k})_{k\in\mathbb Z_{\geq0}}$ to G.\ Lusztig's dual canonical basis. 
However, Lusztig's dual canonical basis is defined in the non-commutative setting of the
invariant space $\mathrm{Inv}(V^{\sigma_n})$ for the tensor product 
$V^{\sigma_n}$ of irreducible $U_q(\mathfrak{sl}_3)$-representations. 
To interpret our $\mathrm{SL}(V)$-invariants as $U_q(\mathfrak{sl}_3)$-invariants, we rely
on the shared diagrammatic representation of these invariants in terms of tensor diagrams, see also the discussion in
Section \ref{subsec:backto}. 

More precisely, in Section ~\ref{Invashom}, we present the invariant spaces $\mathrm{Inv}(V^{\sigma_n})$, $n\in\mathbb Z_{\geq0}$. In Section ~\ref{sect:growth}, we discuss how G.\ Kuperberg's web basis extents to a basis of $\mathrm{Inv}(V^{\sigma_n})$. We also explain how web invariants can be 
expressed in the induced tensor product basis of $\mathrm{Inv}(V^{\sigma_n})$.
A brief description of the combinatorics of flow lines, developed by M.\ Khovanov and G.\ Kuperberg's in \cite{KK}, and an explanation on
how this approach can be used to compute the change of basis between Kuperberg's web basis and 
the tensor product basis will also be provided.\\

The main reference for these sections is M.\ Khovanov and G.\ Kuperberg's beautiful work \cite{KK}. 
A good collection of additional background results and definitions can be found in the contributions \cite{Mackaay, LHRII, 20071503M}.


\subsection{Preliminaries on $U_q(\mathfrak{sl}_3)$-invariants}\label{Invashom}

Let $\mathbb C(q^{\frac{1}{2}})$ be the field of 
complex-valued rational functions in the indeterminate $q^{\frac{1}{2}}.$
Consider the deformation $U_q(\mathfrak{sl}_3)$ of the
universal enveloping algebra $\mathfrak{sl}_3.$
This is a Hopf algebra over $\mathbb C(q^{\frac{1}{2}})$ generated by
$E_i, F_i, K_i$ and $K^{-1}_i,$ $i=1,2$ and satisfying 
certain relations, see \cite[\S.2]{KK}. 
Throughout set $v=-q^{\frac{1}{2}}.$ 
Let $V^\circ$ be the canonical 3-dimensional representation of $U_q(\mathfrak{sl}_3)$. Let $\{e_1^\circ,e_0^\circ,e_{-1}^\circ\}$ be a basis for $V^\circ$
and $\{e_1^\bullet,e_0^\bullet,e_{-1}^\bullet\}$ the dual basis for  $V^\bullet=(V^{\circ})^\ast$. Here $\ast$ denotes the standard duality over $\mathbb C(v).$
Let $\sigma_n=(s_1,s_2,\dots, s_n)\in\{\circ,\bullet\}^n$ be a {\em signature}, that is a cyclic
word indexing the tensor product of copies of $V^\circ$ and its dual:
$
V^{\sigma_n}=V^{s_1}\otimes V^{s_2}\otimes\dots\otimes V^{s_n}.
$
If $\sigma$ is the empty sequence then $V^\sigma$
is the trivial representation $\mathbb C(v)$ with $U_q(\mathfrak{sl}_3)$-module structure given by the co-unit. 
Let  $J=(j_1,j_2,\dots, j_n)\in\{1,0,-1\}^n$ be a non-cyclic word in the alphabet $\{1,0,-1\}$ called a {\em state string}.
For any signature $\sigma_n$ and state string $J$ of length $n,$ let 
$
e_{J}^{\sigma_n}=e_{j_1}^{s_1}\otimes e_{j_2}^{s_2}\otimes \dots\otimes e_{j_n}^{s_n}
$
be a simple tensor in $V^{\sigma_n}.$
In the rest of the paper, we are interested in the invariant space
$$
\mathrm{Inv}(V^{\sigma_n})=
\mathrm{Hom}_{U_q(\mathfrak{sl}_3)}(V^{\sigma_n},\mathbb C(v))
$$
defined as in \cite{KK}.

Let
\begin{align*}
[Y_\bullet^{\circ\circ}]:V^\bullet&\rightarrow V^\circ\otimes V^{\circ}\\
e_1^{\bullet}&\mapsto e_1^{\circ}\otimes e_{0}^{\circ}+v^{-1} e_0^{\circ}\otimes e_{1}^{\circ} \\
e_0^{\bullet}&\mapsto e_1^{\circ}\otimes e_{-1}^{\circ}+v^{-1} e_{-1}^{\circ}\otimes e_{1}^{\circ}\\
e_{-1}^{\bullet}&\mapsto e_0^{\circ}\otimes e_{-1}^{\circ}+v^{-1} e_{-1}^{\circ}\otimes e_{0}^{\circ}.
\end{align*}

Consider also:
\begin{align*}
[\mho^{\circ\bullet}]:\mathbb C(v)&\rightarrow V^\circ\otimes V^{\bullet}  \\
1&\mapsto e^\circ_1\otimes e_{-1}^{\bullet} +v^{-1}e^\circ_0\otimes e_0^{\bullet} +v^{-2}e^\circ_{-1}\otimes e_{1}^{\bullet}
\end{align*}
and the pairing given by:
\begin{align*}
[\Omega^{\bullet\circ}]&:V^\circ\otimes V^{\bullet}\rightarrow\mathbb C(v),&&e^\circ_{-1}\otimes e_1^{\bullet}\mapsto 1&& \, e^\circ_{0}\otimes e_0^{\bullet}\mapsto v& \, e^\circ_{1}\otimes e_{-1}^{\bullet}\mapsto v^2.
\end{align*}
and where all other basis elements are sent to zero. Dually, one also defines
$[Y_\circ^{\bullet\bullet}]$ $[\mho^{\bullet\circ}]$ and
$[\Omega^{\circ\bullet}].$
Moreover, since
\begin{align}\label{isohominv1}
\mathrm{Hom}_{U_q(\mathfrak{sl}_3)}(V^{\sigma_l}, V^{\sigma_m})\cong
\mathrm{Inv}((V^{\sigma_l})^\ast\otimes V^{\sigma_m})
\end{align}
we view the above morphisms as invariants, see \cite{KK}. All other elements of $\mathrm{Inv}(V^{\sigma_n})$, $n\in\mathbb Z_{\geq 0}$, can be obtained by these invariants by contraction and tensor products, as well as the usual operations of addition and scalar multiplication \cite{Kuperberg}.   
M.\ Khovanov and G.\ Kuperberg described the above invariants diagrammatically as in Figure ~\ref{YL}.
Each such diagram is embedded into a half-space with fixed vertical orientation. The boundary vertices represent the arguments of the invariant: white points, $\circ,$ represent arguments in $V^\circ,$ while black points, $\bullet,$ represent arguments in $V^\bullet.$ 
For $[\Omega^{\bullet\circ}]$ and $[\Omega^{\circ\bullet}]$ the opposite convention holds.
Tensor product and contraction are given by disjoint union and gluing. In this non-commutative setting, invariants depend on the number and location of the pairings. This will be specified diagrammatically by drawing a point with vanishing slope of an imaginary tangent line at the location of the contraction (vanishing tangent points). For some purposes it will be convenient
to draw the boundary points of tensor diagrams on two horizontal lines to clarify the arguments
of the corresponding homomorphisms.

\begin{figure}[ht]
\psfragscanon
\psfrag{A}{$[Y_\bullet^{\circ\circ}]$}
\psfrag{B}{$[Y_\circ^{\bullet\bullet}]$}
\psfrag{C}{$[\lambda^{\circ}_{\bullet\bullet}]$}
\psfrag{D}{$[\lambda^{\bullet}_{\circ\circ}]$}
\psfrag{a}{$[\mho^{\circ\bullet}]$}
\psfrag{b}{$[\mho^{\bullet\circ}]$}
\psfrag{e}{$[Y_\bullet^{\circ\circ}]$}
\psfrag{f}{$[Y_\circ^{\bullet\bullet}]$}
\psfrag{c}{$[\Omega^{\bullet\circ}]$}
\psfrag{d}{$[\Omega^{\circ\bullet}]$}
 \includegraphics[scale=0.5]{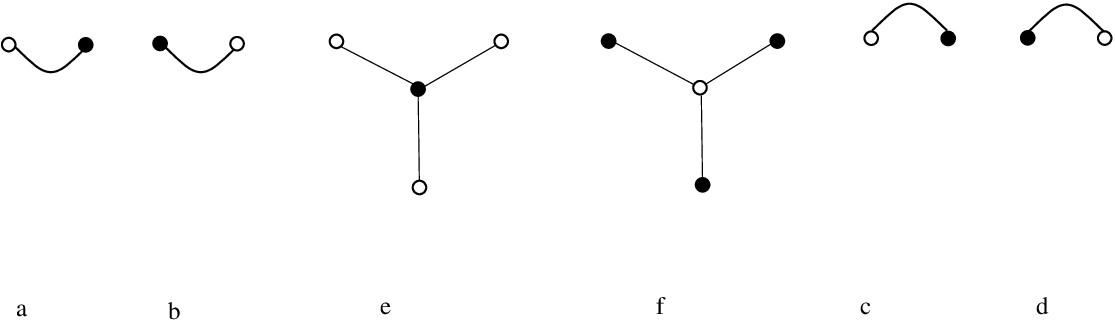}
\centering
\caption{}\label{tensor2} \label{YL}
 \end{figure}

\begin{de}
A $q$-tensor diagram $D$ is a tensor diagram embedded into an half-space with fixed vertical orientation
and with two types of crossings, only univalent boundary vertices (dawn on a horizontal line) and with a collection of vanishing tangent points.
\end{de}
In a $q$-tensor diagram one boundary point is distinguished as first (in green in the following figures).  
These $q$-tensor diagrams are considered up to an isotopy that fixes both
the boundary points as well as the vanishing tangent points. 
To these $q$-tensors diagrams invariants can be associated (not uniquely). These satisfy a number of local relations, called {\em (quantum) skein relations}, see Kuperberg's work \cite{Kuperberg}. 

\begin{de}
A non-elliptic $q$-web is a planar $q$-tensor diagram so that all internal faces have at least six sides.
Invariants defined by $q$-non-elliptic webs will be called {\em $q$-web invariants}.
\end{de}
In the following, we drop the prefix $q$- and denote web invariants defined by a non-elliptic web $W$ by $[W].$ The signature of $[W]$ can be deduced from the boundary of $W$ and is denoted by $\sigma(W)$ or simply by $\sigma.$


\subsection{Web basis and tensor product basis}\label{sect:growth}

The next result, combines G.\ Kuperberg's \cite{Kuperberg} result
showing that web invariants of signature $\sigma$ 
form a $\mathbb C(q)$-basis for $\mathrm{Inv}(V^\sigma)$ 
with M.\ Khovanov and G.\ Kuperberg's Theorem 2  in \cite{KK}.

\begin{thm}\cite{Kuperberg,KK}\label{thmkk}
Let $\sigma$ be a signature.
Web invariants associated with non-elliptic webs with signature $\sigma$
form a $\mathbb Z [v,v^{-1}]$-linear basis for 
$\mathrm{Inv}(V^\sigma).$
\end{thm}

Let $[W]\in\mathrm{Inv}(V^\sigma)$
be an invariant described by a non-elliptic web $W.$ 
Then $[W]$ has a distinguished state string
which parametrizes $W.$ 
This state string can be deduced
using the {\em minimal cut path algorithm}. This algorithm takes as input 
a non-elliptic web $W$ with its signature $\sigma$
and outputs a unique state string of $W,$ called a 
{\em dominant lattice path}, denoted by $J(W).$
Let us also briefly recall the inverse algorithm 
given by the {\em growth algorithm}, see \cite[Prop.1]{KK}.
The input of this algorithm is a
signature $\sigma$ and a state 
string $J$ of the same length. The output is
a web, with signature $\sigma,$ ``grown'' by inductively concatenating $Y,$ $H$ (obtained by
composing a $Y$ and a $\lambda$, see Equation \ref{def:lambda} for the latter) and $\mho$ pieces, following certain simple rules described in the above mentioned reference.
Importantly, it follows from these two algorithms that web invariants are indexed by 
their dominant lattice path $J(W).$
We will see that also dual canonical basis elements can be indexed by dominant lattice paths, see the discussion after Theorem \ref{Lusztig}.

In the following, the notation $J'<J$ indicates the {\em lexicographic order} 
on the set of state strings (of fixed length) in the alphabet $\{1>0>-1\}.$
\begin{thm}\label{thm3KK}\cite[Thm.2]{KK}
The web invariant $[W]\in\mathrm{Inv}(V^\sigma)$ 
expands as
$$
[W]=e_{J(W)}^\sigma+\sum_{J'<J(W)}c(\sigma,J(W),J')e_{J'}^\sigma
$$ 
for some coefficients $c(\sigma,J(W),J')\in\mathbb{Z}_{> 0}[v,v^{-1}].$
\end{thm}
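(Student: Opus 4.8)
The plan is to obtain $[W]$ from the flow--weight description of invariants and then exploit the fact (\cite[Prop.\ 1]{KK}) that the growth algorithm reconstructs $W$ from the pair $(S,J(W))$. By \eqref{eqD} together with its reformulation in terms of flows, one has
\[
[W]=\sum_{J}w_J(W)\,e^S_J,\qquad w_J(W)=\sum_{F}\mathrm{wt}(F),
\]
where the inner sum ranges over all flows $F$ on $W$ with boundary state $J$. Since on a non-elliptic web every flow line has total weight a single power of $v$ with coefficient $1$, each $\mathrm{wt}(F)$ is a monomial in $v$, and these monomials do not cancel in the sum; hence $w_J(W)\in\mathbb Z_{\geq0}[v,v^{-1}]$, which is the last assertion of the theorem. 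It therefore remains to prove the two triangularity statements: (i) $w_{J(W)}(W)=1$, and (ii) $w_{J'}(W)=0$ whenever $J'>J(W)$ in the lexicographic order on $\{1>0>-1\}$.

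I would prove (i) and (ii) simultaneously by induction on the number of $Y$-, $H$- and $U$-pieces in the growth-algorithm construction of $W$ from $(S,J(W))$, which is legitimate precisely because the minimal cut path and growth algorithms are mutually inverse. In the base case $W$ is a union of parallel strands, $[W]=e^S_{J(W)}$, and both claims are immediate. For the inductive step one writes $W$ as a web $W'$ with one further piece $P\in\{Y,H,U\}$ concatenated along a substring of the boundary string produced at the previous stage; by induction $W'$ already has the stated triangular form with leading state its own dominant lattice path. A flow on $W$ restricts to a flow on $W'$ and a flow on $P$ that agree along the shared strands, and conversely compatible pairs glue. Reading off the coordinate expressions of $[Y^{\circ\circ}_\bullet]$, $[\lambda^{\circ}_{\bullet\bullet}]$, $[U^{\circ\bullet}]$ (and their $\circ\leftrightarrow\bullet$ analogues), equivalently the local weights of Figures \ref{Fig3} and \ref{Fig4}, one checks: (a) $P$ has a unique \emph{dominant} flow, of weight $1$, whose boundary state is lexicographically largest among flows on $P$; (b) this dominant flow is exactly the one dictated by the growth algorithm, so gluing it to the dominant flow on $W'$ produces a flow on $W$ with boundary state $J(W)$ and weight $1$; (c) any flow on $W$ whose boundary state is $\ge J(W)$ must restrict on $W'$ to a flow with boundary state $\ge$ its dominant one and on $P$ to the dominant flow of $P$, hence coincides with the flow built in (b). Statements (a)--(c) yield (i) and (ii), completing the induction.

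The computational part, step (a), is a finite check on the three local shapes $Y$, $H$, $U$ using the explicit maps above and is routine. The main obstacle is the matching in (b)--(c): one must verify that the state string the growth algorithm assigns to each newly attached piece is compatible with --- indeed forced by --- the dominant lattice path of the whole web $W$, so that the properties ``dominant boundary state'' and ``weight $1$'' propagate correctly through every concatenation. This is exactly the point at which the detailed concatenation rules of the minimal cut path and growth algorithms from \cite[\S.5]{KK} are used, and it requires care with the combinatorics of lattice paths rather than any further calculation.
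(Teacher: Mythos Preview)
The paper does not give its own proof of this statement: Theorem~\ref{thm3KK} is quoted verbatim from \cite[Thm.~2]{KK} and is used as a black box thereafter. So there is nothing in the present paper to compare your argument against.

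That said, your sketch is in the spirit of the original Khovanov--Kuperberg argument. The positivity assertion $c(S,J(W),J')\in\mathbb Z_{\ge 0}[v,v^{-1}]$ is exactly the non-cancellation of flow weights recalled just before the theorem, and the unitriangularity is what the growth/minimal-cut-path bijection is designed to deliver. One point to be careful about in your step~(c): the lexicographic order on boundary states of $W$ does not a priori decompose into lexicographic orders on the boundary states of $W'$ and of the piece $P$, since attaching $P$ modifies the state string at interior positions of the word. In \cite{KK} this is handled not by a direct lexicographic comparison but by showing that the dominant path $J(W)$ is the \emph{unique} boundary state for which the associated lattice path is dominant and the flow has weight $1$, via the explicit bijection of Proposition~1 there; the lexicographic maximality then follows. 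Your inductive outline is correct in shape, but the justification of (c) should be phrased through that bijection rather than through a naive compatibility of lexicographic orders under concatenation.
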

As next, we recall how the
coefficients $c(\sigma,J(W),J')$ can be determined using the combinatorics of flow lines developed
in \cite{KK}. 
A {\em flow} $F$ on any tensor diagram $W$ is
an oriented subgraph of $W$ 
that contains exactly two out of three edges incident to each trivalent
vertex. The connected components of $F$ 
are called {\em flow lines}. 
The orientation of these flow lines is independent 
of the  coloring of the boundary of $W.$
Let us denote $W$ with a flow $F$ by $W_F.$
To $W_F$ a state string, called the {\em boundary state} 
of $F$, $J_F \in\{1,0,-1\}^n,$ can be associated.
The entries of $J_F$ can be read off from 
the boundary vertices of $W_F,$
according to the following convention.  
For the $i$-th boundary vertex of $W_F$ the entry $j_i$ of $J_F$ is 1
if a flow line in $W_F$ is oriented 
downwards at the edge $i$;
$j_i=-1,$ if a flow line in $W_F$ is oriented 
upwards at the edge incident to $i$ and
$j_i=0,$ otherwise. 
To these flows {\em weights} are associated 
according to the local chart given in Figure ~\ref{weightY}. 
In the tensor diagrams in Figure ~\ref{weightY} we omit the coloring of the 
boundary vertices, as the weight of the flow lines does not depend on it.
Moreover, changing the vertical orientation of the invariants $[Y_\bullet^{\circ\circ}]$
$[Y_\circ^{\bullet\bullet}]$ one obtains the invariant
$
[\lambda^{\circ}_{\bullet\bullet}]:V^\bullet\otimes V^\bullet\rightarrow V^\circ
$
defined by
\begin{equation}\label{def:lambda}
\begin{split}
e_1^{\bullet}\otimes e_{0}^{\bullet}&\mapsto ve_{1}^\circ\\
e_1^{\bullet}\otimes e_{-1}^{\bullet}&\mapsto ve_{0}^\circ\\
e_0^{\bullet}\otimes e_{-1}^{\bullet}&\mapsto ve_{-1}^\circ
\end{split}
\quad\quad
\begin{split}
e_0^{\bullet}\otimes e_{1}^{\bullet}&\mapsto e_{1}^\circ\\
e_{-1}^{\bullet}\otimes e_{1}^{\bullet}&\mapsto e_{0}^\circ\\
e_{-1}^{\bullet}\otimes e_{0}^{\bullet}&\mapsto e_{-1}^\circ
 \end{split}
\end{equation}
and zero otherwise, together with 
invariant $\lambda^{\circ}_{\bullet\bullet}$ defined dually.
Thus, changing the vertical orientation of tensor diagrams changes their local weights, as one sees
in Figure ~\ref{weightY}.

\begin{figure}[ht]
\center
\psfragscanon
\psfrag{1}{1}
\psfrag{a}{$v$}
\psfrag{b}{$v^{-1}$}
\psfrag{e}{$v^{-2}$}
\psfrag{f}{$v^2$}
  \includegraphics[width=12cm]{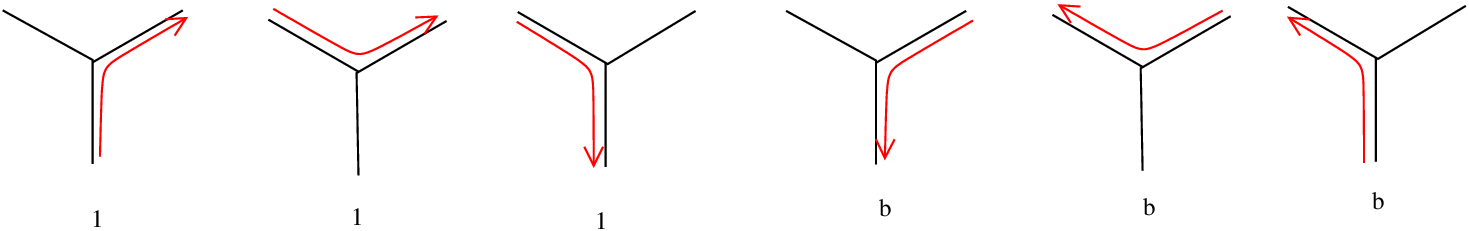}
 \includegraphics[width=12cm]{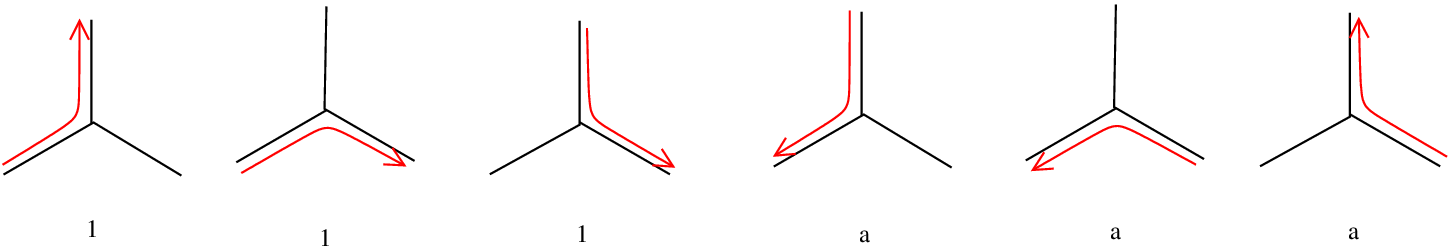}
  \includegraphics[width=12cm]{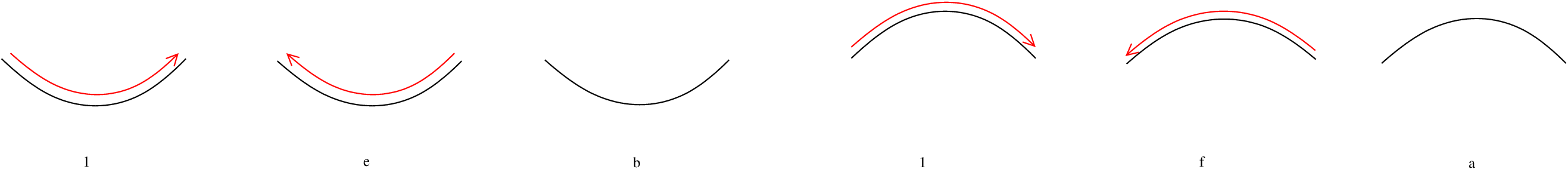}
 \caption{}
    \label{weightY}
\end{figure}
For a general web invariant $[W]\in\mathrm{Inv}(V^{\sigma_n})$ 
each state has some weight $v^n$, where $n$
is called the {\em exponent of the state}.
The sum of weights of 
all flows with boundary state 
$J_F$ determines the {\em weight of the state $J_F$} of $W_F$.
This weight is then the coefficient 
$c(\sigma, W, J(F))$ of $e_{J(F)}^\sigma$, in Theorem ~\ref{thm3KK}.
Since the  weight of any state in non-negative 
it follows that $c(\sigma, W, J(F))\in\mathbb{Z}_{>0}[v^{-1},v]$.


\section{Lusztig's dual canonical basis for $\mathrm{Inv}(V^{\sigma_n})$}\label{dualcan}

In this section, we provide an axiomatic description of Lusztig's dual canonical basis
for $\mathrm{Inv}(V^{\sigma_n})$ following closely Khovanov-Kuperberg's exposition given in \cite[\S.6]{KK}.
We then move on to present some useful operations on 
non-elliptic webs which preserve dual canonical invariants, see Proposition ~\ref{propemd}.
The following, will be a brief description of a second combinatorial approach used
to determine if a web invariant is dual canonical, see Theorem ~\ref{noredgraph}.
This combinatorial approach was developed by Louis-Hadrien Robert 
\cite{LHRII} and uses red-graphs. 
Finally, in Section ~\ref{subsec:backto} we discuss how $U_q(\mathfrak{sl}_3)$-invariants relate 
to $\mathrm{SL(V)}$-invariants.


\subsection{The axiomatic description of Lusztig's dual canonical basis}

As before, let $J=(j_1,j_2,\dots, j_n)\in\{1,0,-1\}^n$ be a state string. Let
$\sigma=(s_1,s_2,\dots, s_n)\in\{\circ,\bullet \}^n$  be a signature and let
$ e^\sigma_J=e_{j_1}^{s_1}\otimes e_{j_2}^{s_2}\otimes\dots\otimes e_{j_n}^{s_n}\in V^\sigma$ 
be the simple tensor corresponding to a given $J$ and $\sigma.$
For any $n$ and choice of $\sigma$, one defines inductively a $v$-antilinear 
endomorphism~$\Phi:V^{\sigma}\rightarrow V^{\sigma},$ as follows.
On $V^\bullet,$ resp.~  $V^\circ,$ the endomorphism $\Phi$ is defined by
$$\Phi(\sum c_ie_i^s)=\sum\bar{c_i}e_i^s$$
where the sum is over $i\in\{1,0,-1\},$ and $c_i\in\mathbb C(v).$ 
Here $\,\bar{}:\mathbb C(v)\rightarrow \mathbb C(v)$
is the $\mathbb C(v)$-algebra involution given by $\overline{v^n}=v^{-n}$
for all $n\in \mathbb Z.$ 
Next, assume that $\Phi$ is already defined on the tensor products $V^{\sigma_1}$ and $V^{\sigma_2},$ for two arbitrary signatures
$\sigma_1$ and $\sigma_2.$
On  the tensor product $V^{\sigma_1}\otimes V^{\sigma_2}$ one then defines $\Phi$ 
by setting
$$
\Phi\big(e_{J_1}^{\sigma_1}\otimes e_{J_2}^{\sigma_2}\big)=\overline{\Theta}\big(\Phi^{\sigma_1}(e_{J_1}^{\sigma_1})\otimes \Phi^{\sigma_1}(e_{J_2}^{\sigma_2})\big).
$$
Here  $J_1$ and $J_2$ are arbitrary state strings of the same length as $\sigma_1,$ resp.\ $\sigma_2,$ and
$\overline{\Theta}\in U_q(\mathfrak{sl}_3)\hat{\otimes} U_q(\mathfrak{sl}_3)$
is the bar-conjugate of the quasi-$R$-matrix 
defined in a completion of $U_q(\mathfrak{sl}_3)\otimes U_q(\mathfrak{sl}_3).$ 
The endomorphism $\Phi$ can be shown to be well defined on all tensor products of copies of $V^{\circ}$ and $V^{\bullet}.$
Using this endomorphism $\Phi,$ Khovanov-Kuperberg define in \cite[Thm.3]{KK}
Lusztig's dual canonical basis of $V^{\sigma},$ as follows. 

\begin{thm} \cite[Thm.3]{KK}\label{Lusztig}
For any signature $\sigma$ and state string $J,$ there is a unique element
$
\ell_{J}^{\sigma} \in V^{\sigma}
$
which is invariant under $\Phi$ and whose expansion in the tensor
product basis is
$$
\ell_{J}^{\sigma}=e_{J}^{\sigma}+\sum_{J'}c(\sigma,J,J')e^{\sigma}_{J'}  
$$
with $c(\sigma,J,J')\in v^{-1}\mathbb{Z}[v^{-1}].$
\end{thm}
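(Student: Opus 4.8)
The plan is to follow the classical Kazhdan--Lusztig / Lusztig bar-involution argument adapted to the tensor-product setting, exactly as in \cite{MR1180036,MR1446615,MR1657524}. First I would establish the two structural properties of the bar-type operator $\Phi$ that make the argument run: (i) $\Phi$ is $v$-antilinear, i.e.\ $\Phi(c\,e)=\bar c\,\Phi(e)$ for $c\in\mathbb C(v)$, and $\Phi^2=\mathrm{id}$; (ii) when $\Phi$ is expanded in the tensor product basis $\{e^S_J\}$ it is ``upper unitriangular with respect to the lexicographic order $<$ on state strings,'' meaning
$$
\Phi(e^S_J)=e^S_J+\sum_{J'<J} r_{J,J'}(v)\,e^S_{J'}
$$
for suitable $r_{J,J'}\in\mathbb Z[v,v^{-1}]$. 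Property (i) is immediate from the definition of $\Phi$ on $V^\circ$ and $V^\bullet$ together with the fact that the quasi-$R$-matrix $\overline{\Theta}$ is bar-invariant in the appropriate completion; that $\Phi^2=\mathrm{id}$ is a standard consequence of $\overline{\Theta}\,\overline{\overline{\Theta}}=1$ and is asserted in the excerpt. Property (ii) requires knowing that $\overline{\Theta}-1\otimes1$ lies in $\sum_{\nu>0}U^-_{-\nu}\otimes U^+_\nu$, so that applying it to $e^S\otimes e^{S'}$ only produces basis vectors whose state string is strictly smaller in the order used to define triangularity; here one uses that the generators $E_i$ (resp.\ $F_i$) lower (resp.\ raise) the relevant weight coordinates in a way compatible with $<$. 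This is the place where one must be careful that the chosen order on state strings really is the one under which $\overline{\Theta}$ acts triangularly, and that the counit/invariant-space conventions in Section~\ref{prelim} are consistent with it.

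Given (i) and (ii), existence and uniqueness of $\ell^S_J$ follow by the usual ``bar-involution'' lemma (Lusztig's Lemma~24.2.1, or the Kazhdan--Lusztig inductive construction). Concretely: we seek $\ell^S_J=\sum_{J'\le J}c(s,J,J')\,e^S_{J'}$ with $c(s,J,J)=1$, $c(s,J,J')\in v^{-1}\mathbb Z[v^{-1}]$ for $J'<J$, and $\Phi(\ell^S_J)=\ell^S_J$. Uniqueness: if $x$ is bar-invariant and all its coefficients (other than a possible leading one) lie in $v^{-1}\mathbb Z[v^{-1}]$, then bar-invariance forces every coefficient to lie in $v^{-1}\mathbb Z[v^{-1}]\cap v\mathbb Z[v]=\{0\}$, so $x$ is determined by its leading coefficient; applied to the difference of two candidates this gives uniqueness. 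Existence: build $\ell^S_J$ by downward induction on $J'<J$ in the order $<$. Start from $e^S_J$; at each step the ``defect'' $\Phi(\text{current vector})-(\text{current vector})$ is, by (ii) and $v$-antilinearity, a combination $\sum_{J'<J}\beta_{J'}(v)e^S_{J'}$ with $\overline{\beta_{J'}}=-\beta_{J'}$ for the top remaining index; the standard fact that any $\beta\in\mathbb Z[v,v^{-1}]$ with $\bar\beta=-\beta$ can be written uniquely as $\gamma-\bar\gamma$ with $\gamma\in v^{-1}\mathbb Z[v^{-1}]$ lets us subtract $\gamma\,e^S_{J'}$ to kill that coefficient while keeping bar-invariance achievable and the triangular form intact. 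Iterating over the finitely many $J'<J$ produces $\ell^S_J$. Finally, since $\Phi$ preserves $\mathrm{Inv}(V^S)$ (it is defined via $U_q(\mathrm{sl}_3(\mathbb C))$-data and the fundamental invariants of Section~\ref{prelim} are bar-invariant, or one notes that $\Phi$ commutes with the counit action), the elements $\ell^S_J$ indexed by those $J$ for which a nonzero invariant of leading term $e^S_J$ exists lie in $\mathrm{Inv}(V^S)$ and form the dual canonical basis there.

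The main obstacle I expect is not the abstract bar-involution bookkeeping — that is entirely routine once (i) and (ii) are in place — but rather verifying the triangularity property (ii) \emph{with respect to the specific lexicographic order $J'<J$ on $\{1,0,-1\}$-strings used throughout the paper}, and checking its compatibility with the invariant-space and dominant-lattice-path conventions of Section~\ref{prelim} (so that, in particular, the leading term $e^S_{J(W)}$ of a web invariant in Theorem~\ref{thm3KK} is consistent with the ordering here). This amounts to unwinding the weight conventions $v=-q^{1/2}$ and the explicit coproduct formulas for $E_i,F_i$ to confirm that $\overline{\Theta}$ really is lower-triangular in this order; it is the one step where sign and normalization conventions must be tracked carefully. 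Everything else reduces to citing \cite{MR1180036} (equivalently \cite{Lusztig}, \cite{MR1446615}).
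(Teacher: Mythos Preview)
The paper does not give its own proof of this theorem: it is stated as a result of Lusztig, with attribution in the theorem header and references to \cite{MR1180036,MR1446615,MR1657524} at the start of Section~\ref{Dual_can}. Your plan correctly sketches the standard bar-involution (Kazhdan--Lusztig style) argument from those references, so there is no paper-proof to compare it against; what you wrote is exactly the argument one would cite.
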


If all coefficients $c(\sigma,J,J')$ in Theorem ~\ref{Lusztig} are such that
$c(\sigma,J,J')\in v^{-1}\mathbb{Z}[v^{-1}],$ one says that 
$J$ has {\em the negative exponent property}.
The set $\{ \ell_{J}^\sigma \},$ indexed by all state strings $J,$ 
is a $\mathbb{C}(v)$-linear basis for $V^{\sigma},$ 
called the {\em dual canonical basis} for $V^{\sigma}.$ 

Notice that not all dual canonical basis elements of $V^\sigma$ also
belong to $\mathrm{Inv}(V^\sigma).$ 
To obtain a $\mathbb C(v)$-linear basis for the invariant space
$\mathrm{Inv}(V^\sigma)$ one has to consider the subset of 
all $\{ \ell_{J}^{\sigma} \}$ indexed by dominant paths $J(W)$, 
see the comments below Theorem 3 in \cite{KK}.

By abuse of terminology, we say that the invariant $\ell_{J(W)}^{\sigma(W)}$ is dual canonical if it 
belongs to the dual canonical basis of $\mathrm{Inv}(V^{\sigma(W)}).$ We write then
$\ell(W)$ instead of $\ell_{J(W)}^{\sigma(W)}$.

\begin{rem}
In Theorem ~\ref{Lusztig} we allow any ordering 
in the summation, following M.\ Khovanov and G.\ Kuperberg's approach presented in \cite[Thm.\ 3]{KK}. 
\end{rem}
From Theorem ~\ref{thm3KK} we know that on non-elliptic webs there is a distinguished 
flow line which has overall weight $1$ and boundary state $J(W),$ \cite{KK}.
The boundary state of this flow is called the {\em leading state of $W$}. 
From Theorem ~\ref{Lusztig} one then deduces that $[W]$ is dual canonical if 
and only if every state different than the leading state of $W$ has the negative exponent property.


\subsection{Web basis and dual canonical invariants}\label{subkk}

\begin{figure}[t]
\center
\psfragscanon
\psfrag{-}{$-$}
 \includegraphics[width=8cm]{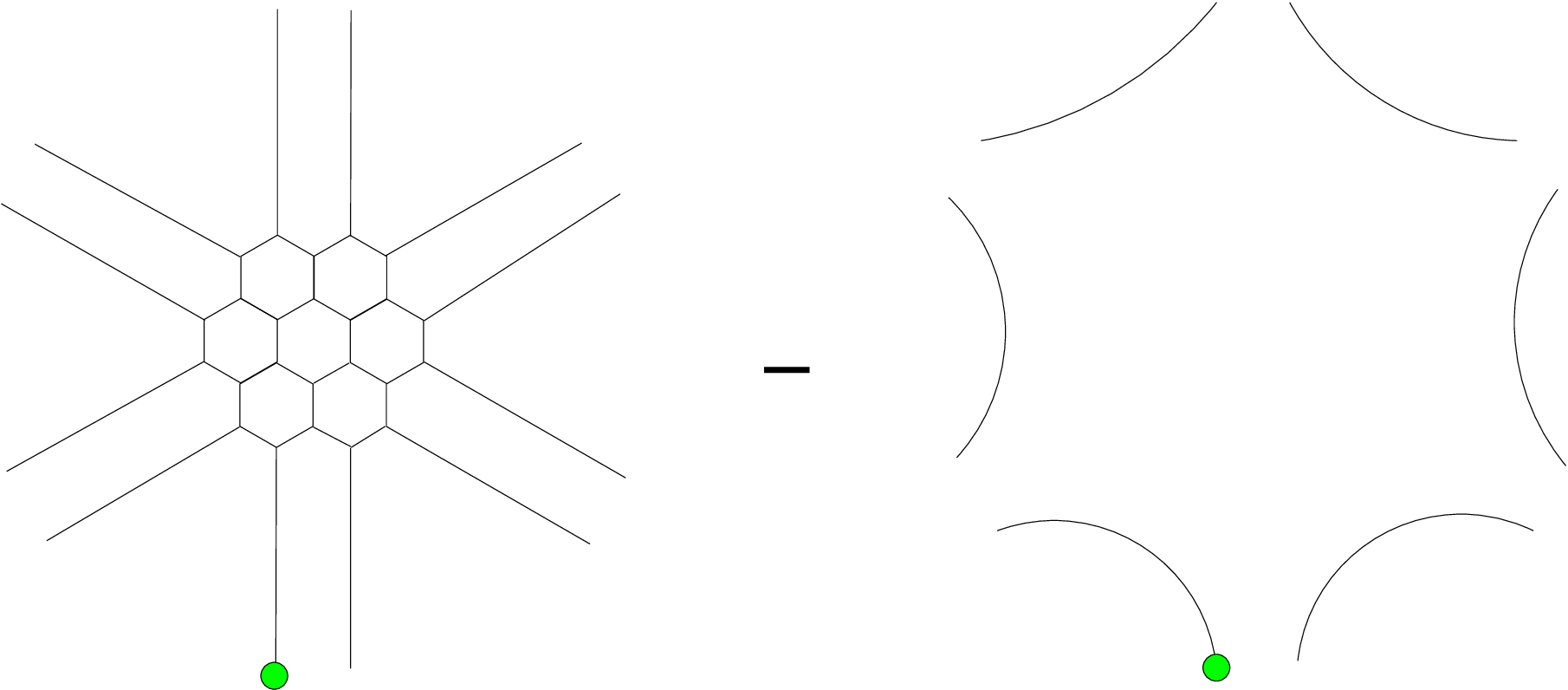}
 \caption{The invariant 
 $\ell(\mathrm{Thick}_2(W))\in\mathrm{Inv}((V^\circ \otimes V^\circ \otimes V^\bullet \otimes V^\bullet)^{\otimes3})$ 
 defined by a formal linear combination of non-elliptic webs.}\label{che2}
\end{figure}

As explained earlier,  the dual canonical basis and the web basis share a 
number of properties but are in general different from each other, as explained
in the next result.
\begin{thm}\cite[Thm.1]{KK} \label{thm.1}
Every web invariant in
$\mathrm{Inv}(V^{s_1} \otimes V^{s_2} \otimes \dots \otimes V^{s_n}),$
$s_i\in\{\circ,\bullet\},$
is dual canonical when $n \leq 12,$ except
for a single web invariant in
$$\mathrm{Inv}((V^\circ \otimes V^\circ \otimes V^\bullet \otimes V^\bullet)^{\otimes3})$$
and its counterparts given by cyclic permutation of tensor factors.
\end{thm}
The non-dual canonical web invariant in Theorem ~\ref{thm.1}
is defined by the second non-elliptic web represented on the left 
of Figure ~\ref{thick35} (and any rotation of it). In the notation developed in this paper, this non-elliptic web is
given by the full unclasp of $\mathrm{thick}_2(W),$ previously denoted by $\mathrm{Thick}_2(W).$
Let $B(W)$ be the full unclasp of the coefficient $b(W)$ associated with the hexagonal web $W$
as defined in Section ~\ref{coeffsub}. Let $$\ell(\mathrm{Thick}_2(W))=[\mathrm{Thick}_2(W)]-[B(W)]\in\mathrm{Inv}((V^\circ \otimes V^\circ \otimes V^\bullet \otimes V^\bullet)^{\otimes3})$$ be defined by the formal linear combination of non-elliptic webs shown in Figure ~\ref{cheb2:intro}.

\begin{thm}\cite[Thm.4]{KK} \label{thm.4}
The invariant $\ell(\mathrm{Thick}_2(W))$ is dual canonical.
\end{thm}
The invariant  $\ell(\mathrm{Thick}_2(W))$ then specializes to the invariant $[\mathrm{band}_2(W)]\in R_{\sigma(W)}(V)$ defined in Section ~\ref{subsect:band} at $q=1$ and after he partial restitution at neighboring and equally colored boundary points.
To see this, notice that both invariants are defined by the same linear combination of non-elliptic webs, after clasping substrings of equally colored endpoints.


\subsection{Operations preserving dual canonical invariants}\label{op}
Despite the fact that the dual canonical and web basis are generally different from each other, 
there are still many web-invariants which are dual canonical basis elements. 
In the next results, we present operations on non-elliptic webs defining dual canonical
invariants which preserve the dual canonical property. 

\begin{prop}\label{propemd}
Let $[D_1]$ and $[D_2]$ be dual canonical web-invariants.
Then any proper embedding of $D_1$ and $D_2$ into the half-plane 
defines a dual canonical web-invariant.
\end{prop}
The precise signature defining the invariant space corresponding to the invariants
given in  Proposition ~\ref{propemd} is read off from the boundary of the non-elliptic webs.
The proof of Proposition ~\ref{propemd} is a consequence of the next result.
\begin{prop}\cite[Prop. 2]{KK}\label{phi_in}
Every web invariant of signature $\sigma$ in $\mathrm{Inv}(V^\sigma)$ is invariant under $\Phi.$
\end{prop}

\begin{proof}
Denote by $D_1\cup D_2$ the image of a proper embedding of 
$D_1$ and $D_2.$  Since $D_1\cup D_2$ consists of two connected components,
one consisting of $D_1$ and one of $D_2,$ $D_1\cup D_2$ is again a non-elliptic web.
Then $[D_1\cup D_2]$ is $\Phi$-invariant by Proposition ~\ref{phi_in}. 
Moreover, all flows
of $D_1\cup D_2$ are obtained as a union of flows of
$D_1$ with flows of $D_2.$
The weights of these flows
are obtained by multiplying the weights of all boundary states
of $D_1$ with the weights of all boundary states of $D_2.$ 
Since $[D_1]$ and $[D_2]$ are dual canonical,
by assumption, all these weights are in $v^{-1}\mathbb Z[v^{-1}]$ 
or $1$ (which is the case only for the leading states on $D_1$ and $D_2$).
Hence, $[D_1\cup D_2]$ has the negative exponent property and the claim follows.
\end{proof}
\begin{cor}
Proposition ~\ref{propemd} extends by linearity to any invariant of signature $\sigma_n$ of $\mathrm{Inv}(V^{\sigma_n}).$
\qed
\end{cor}

\begin{cor}\cite[Corollary to Thm. ~\ref{Lusztig}]{KK}\label{corkk}
The tensor product of dual canonical invariants is a dual canonical invariant.
\end{cor}

To illustrate Proposition ~\ref{propemd}, consider the 
web invariant $[W\cup B\cup B]\in\mathrm{Inv}(V^{\sigma(W\cup B\cup B)})$ given as in Figure ~\ref{cup}. This invariant is defined by the full unclasping of the superimposition 
of hexagonal web $W$ and of two copies of $B=B(W),$ the coefficient associated 
to $W$ described in Section ~\ref{coeffsub}. 
In this example, the invariants $[W]$ and $[B]$ are both dual canonical, which can be seen for example 
with Theorem ~\ref{thm.1}.
Hence, the same is true for the invariant $[W\cup B\cup B]$ by Proposition ~\ref{propemd}.
\begin{figure}[ht]  
               \includegraphics[width=4cm]{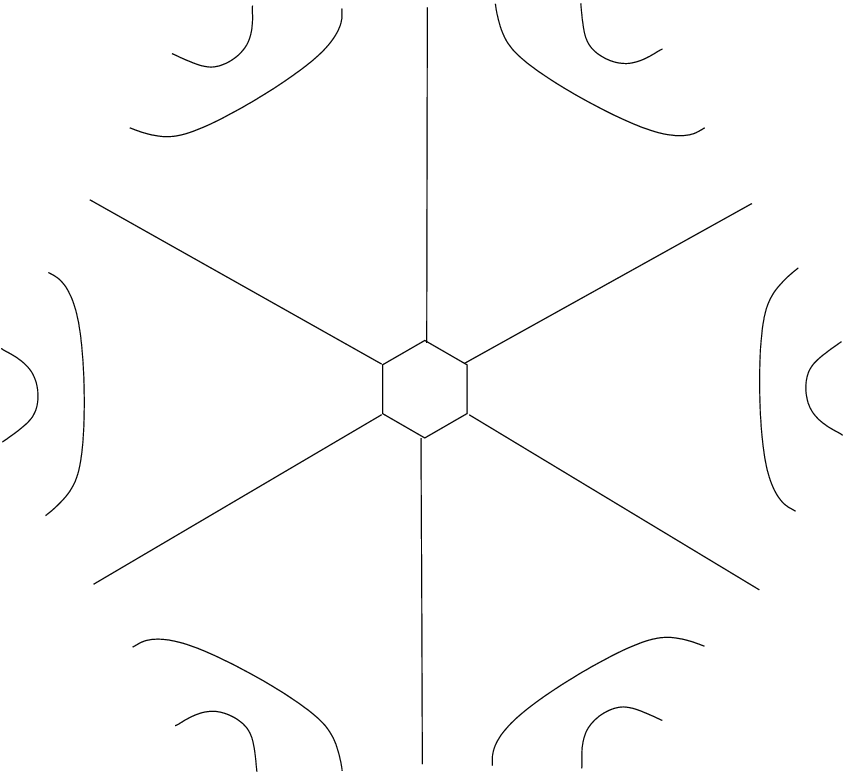}
               \caption{The invariant $[W\cup B\cup B].$}\label{cup}      
\end{figure}

Let us conclude this section by recalling a further 
useful operation on dual web invariants 
preserving the dual canonical property. 
\begin{prop}\cite[Prop.\ 3]{KK}\label{prop3}
Let $[W]$ be a dual canonical web invariant.
Let $W'$ be a non-elliptic web obtained from $W$ by
adding a $Y$ or a double $H$ at the boundary of $W.$
Then $[W']$ is a dual canonical web invariant.
\end{prop}


\subsection{Red graphs and dual canonical basis elements}\label{subsec:ref}

Red graphs are certain graphs defined in non-elliptic webs. These
can be used to determine if the corresponding web invariant is dual canonical.
The theory of red graphs was developed by Louis-Hadrien Robert
\cite{LHRII} and we will bring it together with the categorification of tensor diagrams
given by M.\ Mackaay, W.\ Pan and D.\ Tubbenhauer \cite{Mackaay} to deduce Theorem ~\ref{noredgraph}.

The following definitions are all taken from L.H.\ Robert's work \cite{LHRII}. Throughout the section, let $W$ be any non-elliptic web.
\begin{de}
A {\em red graph} for $W$ is a non-empty induced subgraph $G$ of the dual graph of $W$ 
such that:
\begin{itemize}
  \item  the vertices of $G$ correspond to some subsets of interior faces of $W$ diffeomorphic to discs; 
  \item  if $f_1,$ $f_2$ and $f_3$ are adjacent faces of $W$ sharing a vertex, 
  then at least one face is not a vertex of $G.$
\end{itemize}
\end{de}
Let $f$ be a vertex of a red graph $G$ for $W.$ 
\begin{de}Half edges of $W$ which are adjacent to $f$ and which do not bound other faces of $G$ are called
{\em gray half-edges} of $f$ in $G.$
\end{de}
Let the {\em external degree} of $f,$ denoted by $\mathrm{ed}(f)$ 
be the number of gray half-edges adjacent to $f$ which do 
not bound $f$ or another vertex of $G$. 
\begin{de}
Let $o$ be an orientation of a red graph $G$ of $W$.
The {\em level $i_o(f)$ of $f$ of $G$} is given by
$$i_o(f):= 2-\frac{1}{2}\mathrm{ed}(f)-\vert \{\textrm{edges of $G$ pointing to $f$}\}\vert.$$ 
The {\em level of $G$ } is given by the sum of the levels of all vertices of $G$, or equivalently, by the formula:
$$
 I(G)=2 \vert V(G) \vert- \vert E(G)\vert -\frac{1}{2} \sum_{f\in V(G)}\mathrm{ed}(f).
$$
\end{de}
\begin{de} A red graph is {\em admissible} if one can choose an orientation $o$ of $G$
such that for every vertex $f$ of $G$ one has
$i_o(f)\geq0$. Such an orientation is called {\em fitting}. In addition, an admissible  
red graph $G$ for $W$ is {\em exact} if 
$I(G)=0.$
\end{de}

\begin{de}
Let $G$ be a red graph for $W.$ A {\em pairing} of $G$ is a
partition of the gray half-edges of $G$ into subsets of 2 gray half-edges adjacent to the
same face of $W$, one pointing towards it, and the other pointing away from it. A red
graph together with a pairing is called a {\em paired red graph}.
\end{de}

\begin{figure}[ht]
\centering
\includegraphics[width=12cm]{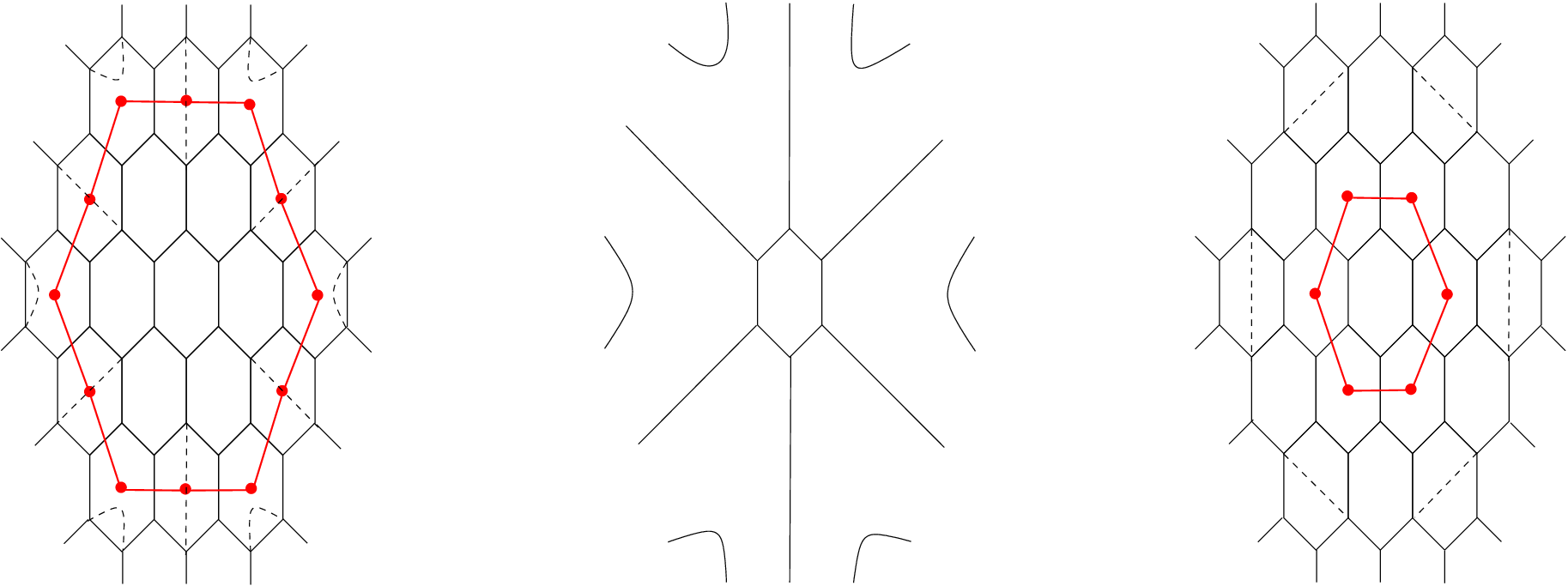}
\caption{Two exact paired red graphs and their $G$-reduction given by the non-elliptic web 
in the middle. Dashed lines indicate the pairings of the red graphs.}\label{Gredfig}
\end{figure}

\begin{de}\label{dered}
Let $G$ be a paired red graph for $W$. The
{\em $G$-reduction of $W$} is the web $W_G$ 
constructed as follows:  To every face of $W$ corresponding to a vertex of  $G$
\begin{itemize}
\item remove all edges adjacent to this face;
\item connect the gray half-edges of $G$ according to the pairing.
\end{itemize} 
\end{de}
The $G$-reduction of $W$, $W_G$,
always has the same signature as $W.$
Moreover, $W_G$ is considered
up to isotopy fixing the boundary and points with horizontal tangent.
Finally, notice that $W_G$ might be elliptic, even when $W$ is non-elliptic.

To illustrate the above definition, 
we give two examples
of paired red graphs in Figure ~\ref{Gredfig}. Both red graphs have exactly one pairing.
The $G$-reduction of the red graph on the right is elliptic.
Reducing with quantum skein relations yields $2^6$ terms, one of which
is the non-elliptic web drawn in
the middle of the figure. The unique $G$-reduction of the red graph on the left
yields the non-elliptic web in the middle of the figure.

\begin{thm}\label{noredgraph}
Let $[W]\in\mathrm{Inv}(V^\sigma)$ be a web invariant of signature $\sigma.$ If $W$ has
no exact red graph, then
$[W]$ is dual canonical in $\mathrm{Inv}(V^\sigma).$
\end{thm}
\begin{proof}
First, if $G$ is an exact red graph for $W$ then, by definition,
the level of $G$ is zero. With Theorem 3.10
and Theorem 3.11 in L.H.\ Robert's work
\cite{LHRII}, one deduces 
that the projective graded module corresponding to $W$ is indecomposable.
One then concludes with Theorem 5.31 in M.\ Mackaay, W.\ Pan and D.\ Tubbenhauer's work 
 \cite{Mackaay}. Notice that
the factor normalizing the grading, implies that 
the leading term of $[W]$ in $\mathrm{Inv}(V^\sigma)$ has weight 1.
\end{proof}

\subsection{Dual canonical invariants in $R_{\sigma'}(V)$}\label{subsec:backto}

In what follows, we give a more detailed description of the relation between the invariant ring
$R_{a,b}(V)$ and the direct sum of $U_q(\mathfrak{sl}_3)$-invariant spaces $\mathrm{Inv}(V^{\sigma})$.
To see this, consider again the decomposition of the ring $R_{a,b}(V)$
into multi linear components as given in Lemma \ref{decop}.
From this decomposition, one deduces that each multi linear component
can be obtained as a specialization at $q=1$ of an $U_q(\mathfrak{sl}_3)$-invariant 
space $\mathrm{Inv}(V^{\sigma})$, for an appropriate choice of $\sigma$
and passing to the quantum symmetric algebra obtained as a quotient of the tensor product $V^{\sigma}.$
For more details on this construction we invite the reader to compare with J.\ Brundan's work \cite[\S.5 and \S.6]{BRUNDAN200617}. Notice that there the author considers the $U_q(\mathfrak{gl}_n)$ case, but
we expect that the case of $U_q(\mathfrak{sl}_3)$-invariants follows in a similar way.

To better illustrate this construction, let us consider the case of $R_{0,b}(V)$. 
The above discussion then gives rise to the following commutative diagram:
\begin{displaymath}
 \xymatrix{   
 & \displaystyle \bigoplus_{p\in\mathbb N^b} \Big({V_q^\bullet}^{\otimes \vert  p \vert} \Big)^{U_q(\mathfrak{sl}_3(\mathbb{C}))} \ar@{->>}[d]\\
 \mathbb C_q[Gr_{3,b}]\ar@{~>}[d]^{q\rightarrow 1} \ar[r]^{\cong^{(1)}}& \displaystyle \bigoplus_{p\in\mathbb N^b}  \Big(S^p_q(V_q)\Big)^{U_q(\mathfrak{sl}_3(\mathbb{C}))}\ar@{~>}[d]^{q\rightarrow 1} \\
\mathbb C[Gr_{3,b}]\cong R_{0,b}(V)\ar[r]^{\cong^{(2)}} &\displaystyle \bigoplus_{p\in\mathbb N^b} \Big(S^p(V)\Big)^{U(\mathfrak{sl}_3(\mathbb{C}))}
             }
\end{displaymath}
where the linear isomorphism (1) essentially follows from Theorem 15 
in \cite{BRUNDAN200617} and the linear isomorphism (2) is given in Lemma \ref{decop}. In the above diagram
we add a subindex $_q$ to differentiate between the commutative and non-commutative setting.

Let $[D]\in\mathrm{Inv}(V^{\sigma(D)})$ be expressed as 
a $\mathbb Z[v,v^{-1}]$-linear combination of web invariants in
Kuperberg's basis. After specializing $q\rightarrow 1$, in the above sense,
the same linear combination of web invariants gives rise to an $\mathrm{SL(V)}$-invariant in
$R_{\sigma(D)}(V)$ which we denote again by $[D]$. In this situation, we say that $[D]\in \mathrm{Inv}(V^{\sigma(D)})$ {\em specializes} to $[D]\in R_{\sigma(D)}(V)$ at the classical limit of $q\rightarrow 1$. 
By abuse of terminology, we also say that $[D]\in \mathrm{Inv}(V^{\sigma(D)})$ {\em specializes} 
to $[D_c]\in R_{\sigma(D_c)}(V)$, if in addition, the invariant $[D_c]$ 
is obtained from $[D]$ after a partial restitution sending
the signature $\sigma(D)$ to $\sigma(D_c)$.

In the following examples, the only partial restitutions we consider are at
successive and equally colored substrings.

\begin{de}\label{spe}
Let $J(D)\in\{1,0,-1\}^n$ be a dominant lattice path and $\sigma(J(D))\in\{\circ,\bullet\}^n$ 
be the corresponding signature.
We say that $\ell(J(D))\in R_{\sigma(J(D))}(V)$
is {\em dual canonical} if $\ell(J(D))$ is the specialization
at the classical limit of $q\rightarrow 1$ of a
dual canonical invariant in $\mathrm{Inv}(V^{\sigma(J(D))})$ parametrized by  $J(D)$ and $\sigma(J(D))$
\end{de}


 \section{Chebyshev polynomials and the dual canonical basis}\label{sect:cheb2}
 
In Section ~\ref{Cheb_2}, we describe a family of Lusztig's dual canonical basis
elements in Kuperberg's web basis, see Theorem ~\ref{band2cd:2} below.
In Section ~\ref{Cheb_3} and Section ~\ref{Cheb_5}, 
we extend this analysis to 
dual canonical basis elements defined by the dominant lattice paths
of $\mathrm{Thick}_k(W)),$ $k\in\{3,5\}$,
for $W$ the hexagonal web. We then 
focus on their expression in Kuperberg's web basis.
Our main results are then given in Proposition ~\ref{a_1} 
and Proposition ~\ref{chebnotdual5}.
In Corollary ~\ref{spec1},  Corollary ~\ref{spec3} and Corollary ~\ref{contra2} 
we present new implications for $\mathrm{SL(V )}$-invariants.

\begin{figure}[t]
 \includegraphics[width=\textwidth]{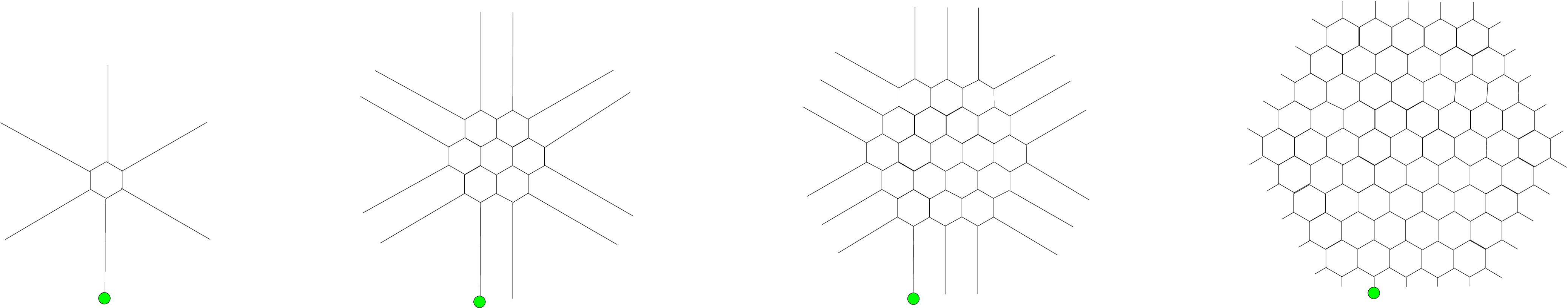}
 \caption{The web invariants $[\mathrm{Thick}_k(W)],$ $k\in\{1,2,3,5\}.$
}\label{thick35}
\end{figure}


\subsection{The case $\ell(\mathrm{Thick}_2(W_n))$ }\label{Cheb_2}

Let $W_n$ be any single-face non-elliptic web with an internal face bounded by at least 6 edges. 
Let $M_n$ be the minimal single-face non-elliptic web in $W_n$ (which might coincide with $W_n$).
Let $B(W_n)$ be the full unclasping of the non-elliptic web $b(W_n)$
associated with $W_n,$ see Section ~\ref{coeffsub}. 
The next result can be deduced for example from Theorem ~\ref{noredgraph}.
\begin{lemma}\label{[B(W)]_is_dc}
The web-invariants 
$[W_n]\in \mathrm{Inv}(V^{\sigma(W_n)})$ and
$[B(W_n)]\in\mathrm{Inv}(V^{\sigma(B(W_n))})$ 
are dual canonical. \qed
\end{lemma}

\begin{rem}
The invariants described in the previous result are special cases
of so called {\em superficial web invariants},
investigated in L.H.\ Robert's work \cite{LHRI}. 
\end{rem}

The proof of the next Theorem ~\ref{band2cd:2} combines Khovanov-Kuperberg's 
flow line approach with the combinatorics of red graphs of Theorem ~\ref{noredgraph}.

\begin{thm}\label{band2cd:2}
The invariant $$[\mathrm{Thick}_2(W_n)]-[B(W_n)]\in\mathrm{Inv}(V^{\sigma(\mathrm{Thick}_2(W_n))})$$
is a dual canonical basis element for all $n\geq6.$ 
\end{thm}
\begin{proof}
When $n=6$ this is Theorem ~\ref{thm.4}. For all other values of $n$
we show that $\mathrm{Thick}_2(W_n)$ has exactly two flow lines
with positive weight: one with boundary state $J(\mathrm{Thick}_2(W_n))$, 
the second with boundary state $J(B(W_n)).$  

We proceed by distinguishing two cases.
First,  we restrict our attention to $\mathrm{Thick}_2(M_n).$ The thickening procedure implies
that all bounded faces in $\mathrm{Thick}_2(M_n),$ except the most internal 
one, are bounded by six edges, see Figure ~\ref{flowthick2} for an example. 
Removing any $H$ piece from the boundary of $\mathrm{Thick}_2(M_n),$ produces
a non-elliptic web $M_n'$ having the property that all bounded faces of $M_n'$ are 
adjacent to the unbounded face. Hence, $M_n'$ has no exact red 
graph and is a dual canonical basis element by Theorem ~\ref{noredgraph}.
We then follow the same steps as in the proof of 
Theorem 4 in \cite{KK} and observe that a hypothetical state $x$ with the 
non-negative exponent property must
either have weight $v$ or $1$ on the $H$ we removed. In the latter case,
the state $x$ must restrict to the leading state of $M_n'$
and differ from the leading state of $M_n.$  
This however is impossible, as there is only one way to extend such an $x$ on $H.$ 
Hence, no such $H$ in $\mathrm{Thick}_2(M_n)$ can have weight 1 and they must all 
have weight $v.$ There is then only one way to complete such a flow to a positive flow
of $\mathrm{Thick}_2(M_n),$ forcing $x$ to be as indicated
in Figure ~\ref{flowthick2}. Notice that  the order used 
to construct $\mathrm{Thick}_2(M_n)$ with the growth algorithm does not matter, \cite[Lemma 1]{KK}.

We then conclude by noticing
that the boundary state of $x$ is the leading state of $B(M_n).$ 
Moreover, $B(M_n)$ is dual canonical by Lemma ~\ref{[B(W)]_is_dc}.
Therefore, the difference $\mathrm{Thick}_2(M_n)-B(M_n)$ has exactly
one flow with positive weight and $[\mathrm{Thick}_2(M_n)]-[B(M_n)]$
has the non-negative exponent property. 

Second, we consider the case where $\mathrm{Thick}_2(W_n)$ is 
given by $\mathrm{Thick}_2(M_n)$ with some $\mathrm{Thick}_2(T_i)$
attached to the boundary, where $T_i$ are some planar tree diagrams. 
Then the state $x$ on $\mathrm{Thick}_2(M_n)$ extends 
to two different positive states on $\mathrm{Thick}_2(W_n),$ if there are at
least two distinct states on some $\mathrm{Thick}_2(T_i),$ beginning in 1 and ending 
in -1 on the two legs connecting to $\mathrm{Thick}_2(M_n).$
But each $[\mathrm{Thick}_2(T_i)]$ is dual canonical for all $i,$ 
since there is no exact red graph in $\mathrm{Thick}_2(T_i).$
Therefore, there is only one such state on $\mathrm{Thick}_2(T_i)$ with positive weight, namely the
leading state of each $\mathrm{Thick}_2(T_i).$

It follows that the state $x$ can be 
extended uniquely to a positive state of $\mathrm{Thick}_2(W_n).$
Moreover, the boundary state of this flow is $J(B(W_n)),$
as it is the leading state on each $\mathrm{Thick}_2(T_i)$ and
the positive state on $\mathrm{Thick}_2(M_n).$
The state $J(B(W_n))$ parametrizes the non-elliptic web 
$B(W_n),$ which is dual canonical by Lemma ~\ref{[B(W)]_is_dc}.

To complete the proof, observe that $[\mathrm{Thick}_2(W_n)]-[B(W_n)]$ is invariant under
$\Phi$ by Proposition ~\ref{phi_in}. 

\begin{figure}[ht]
 \includegraphics[width=10cm,height=5cm]{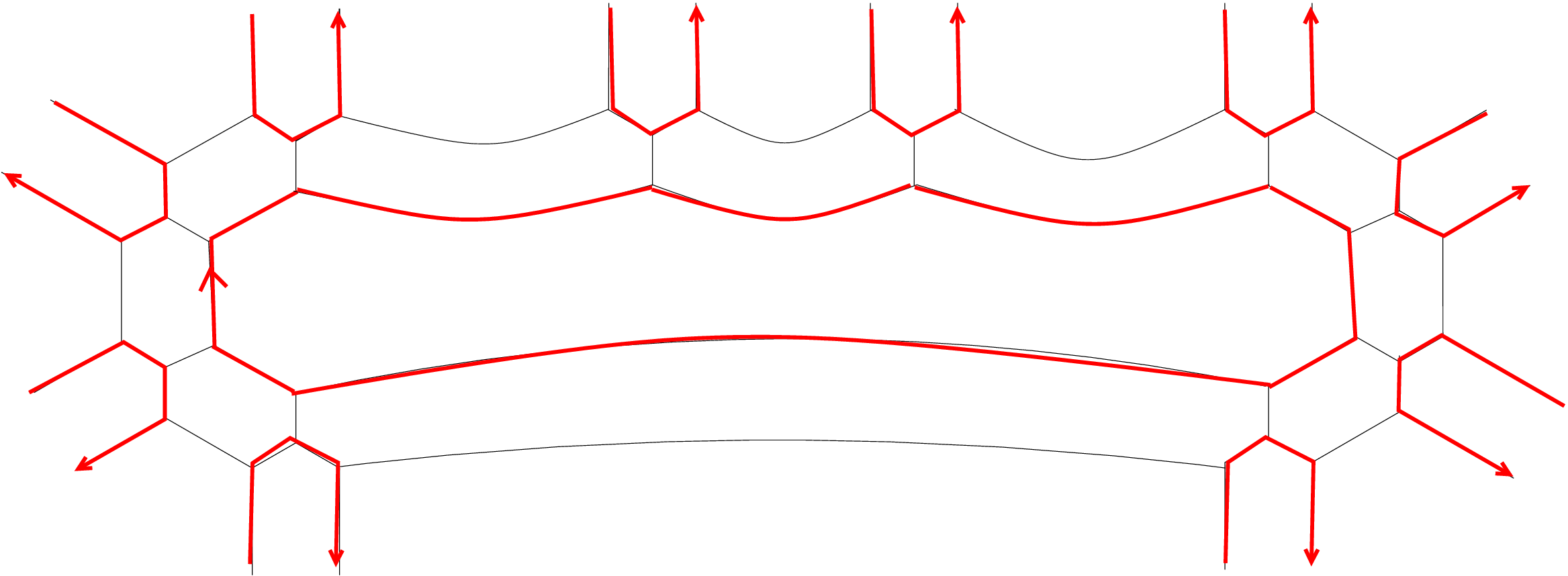}
 \caption{Flow on $\mathrm{Thick}_2(M_n)$ with boundary state $J(B(M_n))$ and overall weight 1.} \label{flowthick2}
\end{figure}
\end{proof}

In the next result, let $\sigma_1=\sigma(W_n)$ and $\sigma_2=\sigma(\mathrm{Thick}_2(W_n)).$
\begin{cor}\label{spec1}
The invariants 
$[\mathrm{band}_2(W_n)]\in R_{\sigma_1}(V) \textrm{ and }
[\mathrm{Band}_2(W_n)]\in R_{\sigma_2}(V)$
are dual canonical.
\end{cor}

\begin{proof}
By Theorem ~\ref{band2cd:2} we know that $\mathrm{Thick}_2(W_n)-B(W_n)$ 
defines a dual canonical invariant in $\mathrm{Inv}(V^{\sigma_2})$. 
But the difference $\mathrm{Thick}_2(W_n)-B(W_n)$ also defines the multilinear invariant
$ [\mathrm{Band}_2(W_n)]=[\mathrm{Thick}_2(W_n)-B(W_n)]$ in $R_{\sigma_2}(V)$.
Clasping all successive equally colored substrings of $\sigma_2$ yields the invariant
\begin{align*}
[\mathrm{band}_2(W_n)]=[\mathrm{thick}_2(W_n)]-[b(W_n)] \in R_{\sigma_1}(V)
\end{align*}
described as in Theorem ~\ref{thmC2:2} and the claim follows.
\end{proof}
This result is special, since the invariants $[\mathrm{Thick}_2(W_n)]$ and $\ell(\mathrm{Thick}_2(W_n))$
in $\mathrm{Inv}(V^{\sigma_2})$
differ only by one term. Moreover, the coefficient of $[B(W_n)]$ is simply $1$.


\subsection{The case $\ell(\mathrm{Thick}_3(W))$}\label{Cheb_3}
Consider the $U_q(\mathfrak{sl}_3)$-invariant $[\mathrm{Thick}_3(W)]$
defined by the non-elliptic web in Figure ~\ref{thick35}. 
The dominant lattice path and 
signature of  $[\mathrm{Thick}_3(W)]$ are as in Lemma \ref{latticepath3}.
Let $\ell(\mathrm{Thick}_3(W))$ be the dual canonical basis element
indexed by the same lattice path.
In Proposition ~\ref{a_1} we consider the
decomposition of $\ell(\mathrm{Thick}_3(W))$ in Kuperberg's web basis. 
To do so, we first use the combinatorics of red graphs and
determine the web invariants arising in the linear combination.
To find the coefficients in this decomposition we use the flow lines.

\begin{figure}[ht]
\centering
\psfragscanon
\psfrag{-b}{$-2$}
\psfrag{-e}{$\pm [R]$}
              \includegraphics[scale=0.28]{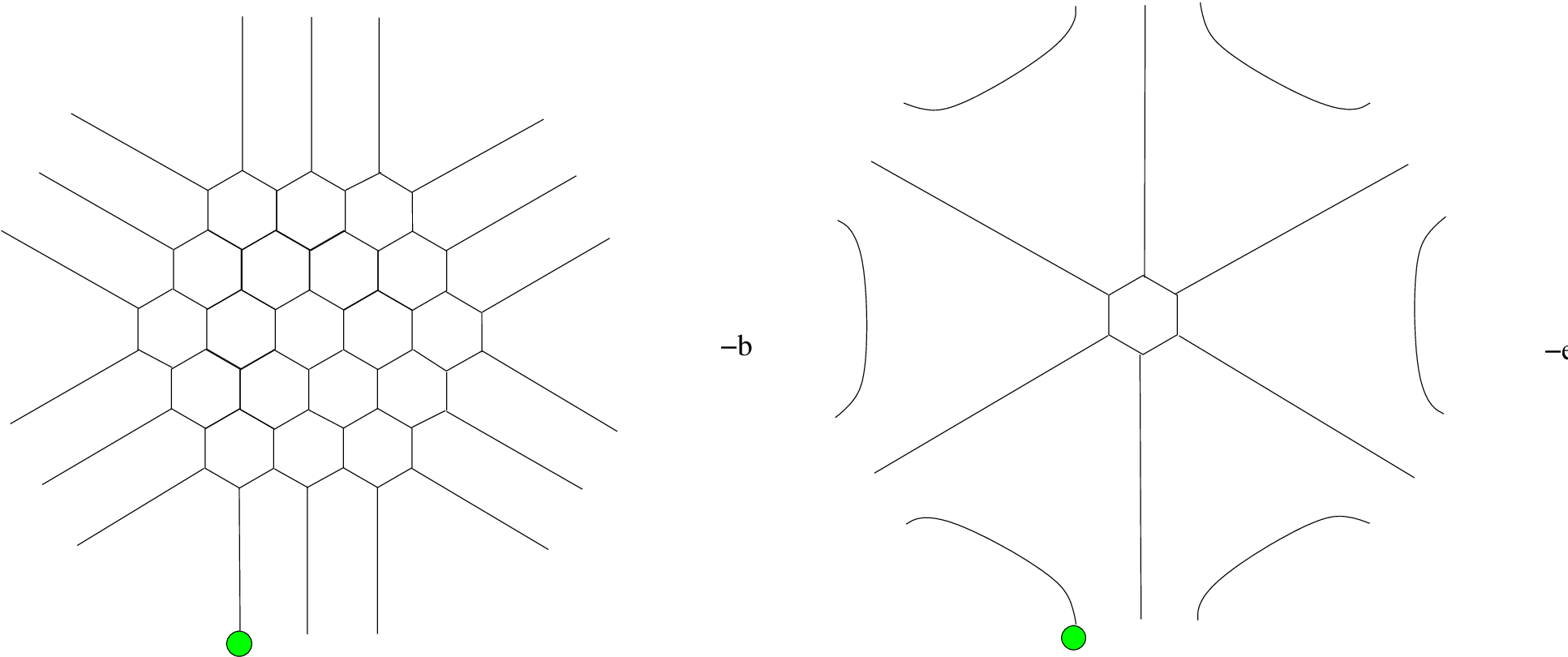}
               \caption{The invariant $[\mathrm{Band}_3(W)]=[\mathrm{Thick}_3(W)]-2[W\cup B]\pm[R].$ }\label{noncan3}
\end{figure}

Throughout, let $W$ be the hexagonal web.
Let $B$ be the full unclasping of $B(W)$ defined by the non-elliptic web
consisting of a six-tuple of $\mho$ shaped tensor diagrams, 
as drawn on the right of Figure ~\ref{che2}. 
The next Lemma ~\ref{latticepath3} provides the dominant 
lattice path of the non-elliptic webs $\mathrm{Thick}_3(W)$ and $W\cup B$ illustrated in Figure ~\ref{noncan3}.
The first part of the claim can be computed with the minimal cut algorithm, 
described in Section ~\ref{sect:growth}
specifying the first boundary vertex as in Figure ~\ref{noncan3} (green in the figure). 
The signatures are cyclic words that can be read off directly from the 
boundary points of the corresponding webs 
following the clockwise order.
In the represented tensor diagrams
we omitted the specific coloring of the vertices since it is irrelevant for
what follows.

\begin{lemma}\label{latticepath3}\leavevmode
\begin{itemize}
\item The dominant lattice path of $\mathrm{Thick}_3(W),$ resp.\ of $W\cup B,$ are:
\vspace{0.2cm}
\begin{center}
\begin{tabular}{r c l}
$J(\mathrm{Thick}_3(W))$&=&(1\ 111\ 011\ 000 \ -100\ -1-1-1\ -1-1) \\
$J(W\cup B)$&=&(1\ -111\ -111\ -101\ -101\ -1-11\ -1-1).\\
\end{tabular}
\end{center}
\vspace{0.2cm}
\item The signature of $\mathrm{Thick}_3(W),$ resp.\ $W\cup B,$ are both equal to:
\begin{align*}
\sigma_3&=(s_1,s_2,s_2,s_2,s_1,s_1,s_1,s_2,s_2,s_2,s_1,s_1,s_1,s_2,s_2,s_2,s_1,s_1)
\end{align*}
$s_1\neq s_2\in\{\circ,\bullet\}.$
\end{itemize} \qed
\end{lemma}

In the applications we have in mind, it will be convenient to distinguish certain 
particularly simple web invariants from the others, see Definition ~\ref{Yatbound} below.
These web invariants have the property that they vanish with 
an appropriate clasping. 

\begin{de}\label{Yatbound} 
A non-elliptic web $W_n$ with signature $\sigma$ has a {\em $Y$ at the boundary}
if the signature $\sigma=(s_1,s_2,\dots, s_n)$ and the dominant path $J(W_n)$ of $W_n$ 
(both viewed as cyclic words, here)
have substrings of the form $$\Big((s_i,s_{i+1}), (1,0)\Big); \Big((s_{i},s_{i+1}),(0,-1)\Big);\Big((s_{i},s_{i+1}), (1,-1)\Big)$$
with $s_i=s_{i+1}\in\{\circ,\bullet\}.$
\end{de}
The terminology used in Definition ~\ref{Yatbound} agrees with the one 
used in the growth algorithm, defined in \cite[Section 5]{KK}.

\begin{lemma}\label{terms3}
The dual canonical invariant
$\ell(\mathrm{Thick}_3(W))\in\mathrm{Inv}(V^{\sigma_3})$ 
decomposes in Kuperberg's web basis as:
\begin{align}\label{dec3}
\ell(\mathrm{Thick}_3(W))&= [\mathrm{Thick}_3(W)]\pm a_1 [W\cup B] 
\pm \sum_{i}a_i[L_i] \pm \sum_{j}a_j[R_j]
\end{align}
where $a_1, a_i, a_j\in\mathbb Z[v,v^{-1}]$ and $[W\cup B],[L_{-}],[R_{-}]$ are all different web invariants. 
Moreover, all $L_-$ are obtained as $G$-reductions of $\mathrm{Thick}_3(W)$ and have a $Y$ at the boundary.
\end{lemma}
\begin{proof}
The coefficients are in $a_1,a_i,a_j\in\mathbb Z[v,v^{-1}]$ by
Theorem ~\ref{thmkk}.
Moreover, the first summand in Equation \eqref{dec3} is the unique non-elliptic
web invariant parametrized by the dominant path
$J(\mathrm{Thick}_3(W))$ and the signature $\sigma_3.$

The summands $a_1 [W\cup B]\pm \sum_{i}a_i[L_i]$
are obtained using the combinatorics of red graphs.
To find these web invariants, we first identify all possible exact 
paired red graphs, $G,$ in $\mathrm{Thick}_3(W).$ 
Without loss of generality, it is enough to consider $G$'s
consisting only of one cycle and with no additional 
trees attached to the cycle.
For these $G$'s there is only one possible pairing.
Second, we analyze all possible $G$-reductions of 
$\mathrm{Thick}_3(W)$ that can occur.
To do this, we vary both the position of $G$ and augment the number of vertices 
of the cycle. We then notice that altogether there are 65 non-elliptic webs obtained as
$G$-reductions of $\mathrm{Thick}_3(W)$ for all possible 
$G$'s in $\mathrm{Thick}_3(W)$: 
two of which are isotopic to $W\cup B,$ the web shown in
the middle of Figure ~\ref{Gredfig}.
All other 63 diagrams have a $Y$ at the boundary.
To see that the coefficients 
$a_1\neq 0, a_i\neq 0,$ it is enough to find the corresponding flow lines on
$\mathrm{Thick}_3(W)$.
\end{proof}

The next claim is an immediate consequence of Theorem ~\ref{noredgraph}.

\begin{lemma}\label{termsdc}
The web invariants $[L_{-}]$ and 
$[W\cup B]$ in the decomposition of Lemma ~\ref{terms3} 
are dual canonical.\qed
\end{lemma}

The main obstacle in completing the
decomposition of Proposition \ref{terms3} is that we don't know if
all invariants in Equation \eqref{dec3} are 
obtained as $G$-reductions of exact red graphs.
We believe that this should be true, since
that's the case in many examples 
including $\ell(\mathrm{Thick}_2(W_n))$ for all single-face non-elliptic webs $W_n,$ 
as we saw in Theorem ~\ref{band2cd:2} and superficial web invariants, 
as it follows from work of L.H.\ Robert \cite{LHRI}.
See also the example given in Figure 20 in L.H.\ Robert's other contribution presented in \cite{LHRI}. 

In the next result, we provide an accurate value of the coefficient of the simple tensor 
indexed by the dominant path $J(W\cup B)$ and signature $\sigma_3$
given in Lemma ~\ref{latticepath3}.

\begin{prop}\label{a_1} 
The invariant $[\mathrm{Thick}_3(W)]\in\mathrm{Inv}(V^{\sigma_3})$
decomposes in the tensor product basis as
\begin{align}\label{eq:prop1}
\begin{split}
[\mathrm{Thick}_3(W)]=&e_{J(\mathrm{Thick}_3(W))}^{\sigma_3} + (2+p(v))e_{J(W\cup B)}^{\sigma_3}\\
&+\sum_{\substack{J'< J(\mathrm{Thick}_3(W))\\ J'\neq J(W\cup B)}}c(\sigma_3,J(\mathrm{Thick}_3(W)), J') e_{J'}^{\sigma_3} 
\end{split}
\end{align}
for $p(v)\in v^{-1}\mathbb Z_{>0}[v^{-1}]$ and $c(\sigma_3,J(\mathrm{Thick}_3(W)), J')\in\mathbb Z_{>0}[v,v^{-1}].$
\end{prop}

\begin{proof}
Consider the expansion given in Lemma ~\ref{terms3}. Given Theorem ~\ref{thm3KK}, 
to prove the claim, all we need is to show 
$$c(\sigma_3,J(\mathrm{Thick}_3(W)), J(W\cup B))=2+p(v).$$
For this, consider a disc $D$ with as many marked points
on its boundary as entries in $\sigma_3.$ 
Assume that these boundary points are colored according to $\sigma_3$ 
and indexed by $J(W\cup B)$ with one point distinguished as first.
Then we determine all possible maximal collections of non-crossing oriented
arcs in $D$ starting in $1$ and ending in $-1.$ We find that there are precisely
50 different such collections.
We then consider all possible embeddings of these
maximal collections of non-crossing 
oriented arcs into the non-elliptic web $\mathrm{Thick}_3(W),$ such
that their images are flow lines with boundary state $J(W\cup B).$
We then complete these flow lines to flows maximizing their weight.
This can be done only by adding as many clockwise oriented closed loops as possible.
We then compute the overall weight of these flows and find that only two flows are such that their 
overall weight is not in  $v^{-1}\mathbb Z_{>0}[v^{-1}].$ These flows are represented
in Figure ~\ref{flow3}. In each case, the overall weight of each flow is $1,$ as we explain more carefully below. 
All other flows have negative weight; hence they define a polynomial in $v^{-1}\mathbb Z_{>0}[v^{-1}].$
Notice that the coefficients are polynomials with positive integers follows already from the definition of
flow lines.

To determine the overall weight of each flow, one can directly use the local rules
of Figure ~\ref{flow3} at each trivalent vertex, since we have a representation
of $\mathrm{Thick}_3(W)$ consisting only of $\lambda$ and $Y$ pieces.
In Figure ~\ref{flow3}, we color each flow line according to its weight, 
so that black flow lines have weight 1, red flow lines have weight $v$
and blue flow lines have weight $v^{-1}.$
Multiplying the weights of each flow line implies that each flow in Figure ~\ref{flow3}
has weight $v^{4}.$ The overall weight of each flow is then obtained by attaching
each non-elliptic web up to a horizontal line using $\mho$-pieces. 
The linear order is determined by the first vertex of $J(W\cup B).$ 
At this stage one creates nine points with vanishing tangent. Their weight 
can be determined with Figure ~\ref{weightY}, and the claim follows. 
\end{proof}

\begin{figure}[ht]
               \includegraphics[width=5cm,height=5cm]{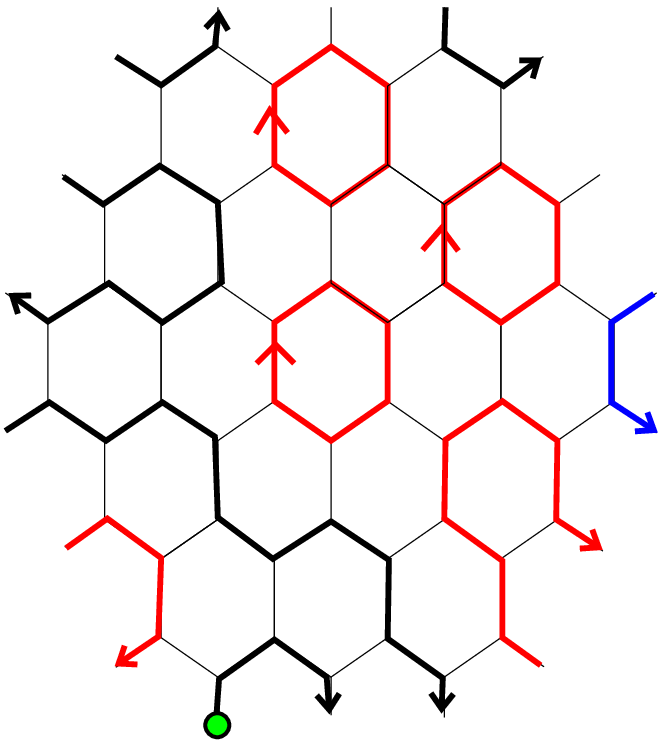}
               \includegraphics[width=5cm,height=5cm]{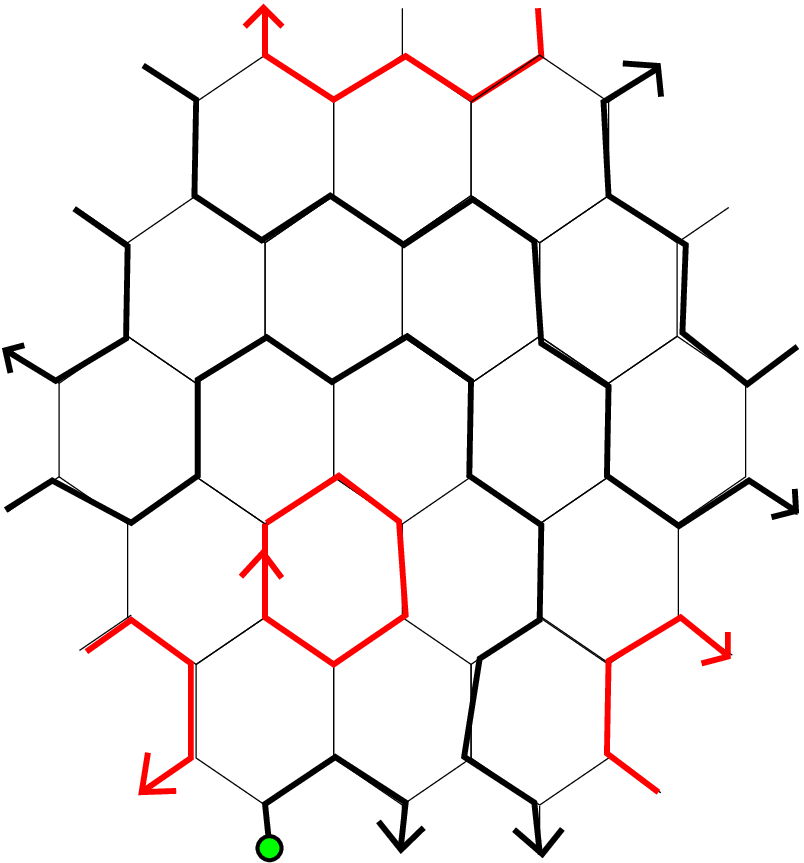}
               \caption{Two flows of $\mathrm{Thick}_3(W)$ with boundary state $J(B\cup W)$
                and overall weight 1.}\label{flow3}
\end{figure}
In the next result, let $\sigma_1=\sigma(W)$ and consider
Equation \eqref{dec3} in Lemma ~\ref{terms3} above.
\begin{cor}\label{spec3}
The invariant $[\mathrm{band}_3(W)]\in R_{\sigma_1}(V)$ is dual canonical
if and only if the terms $a_j$ are such that $\displaystyle\lim_{q\rightarrow 1} a_j=0$ for all $j,$ 
or all $[R_j]$ are zero, or all $R_j$ have a $Y$ 
at the boundary.
\end{cor}
\begin{proof}
In Lemma ~\ref{terms3} the coefficient of
$[W\cup B]$ satisfies $a_1=-2,$ as there are exactly two flows of positive
weight and boundary state $J(W\cup B)$ in $\mathrm{Thick}_3(W),$ by what we saw in the proof of
Proposition ~\ref{a_1}.
Hence, we have on the one side that
\begin{align}\label{eq1}
\ell(\mathrm{Thick}_3(W))= [\mathrm{Thick}_3(W)]-2[W\cup B]\pm \sum_{i}a_i[L_i] \pm \sum_{j}a_j[R_j]
\end{align}
in $\mathrm{Inv}(V^{\sigma_3})$. Moreover, in Equation \eqref{eq1} all $[L_i]$ are different from
$[W\cup B],$ since all non-elliptic webs have a $Y$ at the boundary, and are dual canonical, by Lemma ~\ref{termsdc}.
On the other side, we have the invariant 
\begin{align}\label{eq2}
[\mathrm{band}_3(W)]=[\mathrm{thick}_3(W)]-2[W][b(W)]
\end{align}
in $R_{\sigma_1}(V),$ as given in Theorem ~\ref{thmC2:2}.
Moreover, the terms 
$\sum_{i}a_i[L_i]$
all vanish after clasping $\sigma_3$ at all successive and equally colored substrings.  
From this it follows that Equation \eqref{eq2} is a specialization of Equation \eqref{eq1} at $q\rightarrow 1,$ if and only if the conditions in the claim are satisfied.
\end{proof}
\begin{rem}
$[\mathrm{Band}_3(W)]\in R_{\sigma_3}(V)$
is not dual canonical, since 
the expansion of $[\mathrm{Band}_3(W)]$ in the web basis does not
agree with the expansion of $\ell(\mathrm{Thick}_3(W))$.
\end{rem}


\subsection{The case of $\ell(\mathrm{Thick}_5(W))$}\label{Cheb_5}

In this section, we focus on how $\ell(\mathrm{Thick}_5(W))$ decomposes
in Kuperberg's basis, following the approach used in Section ~\ref{Cheb_3}.

The next result can be deduced as Lemma ~\ref{latticepath3}.

\begin{figure}[t]
\center
\psfragscanon
\psfrag{- a}{$-3$}
\psfrag{- b}{$+4$}
\psfrag{- e}{$\pm R$}
\center
 \includegraphics[scale=0.28]{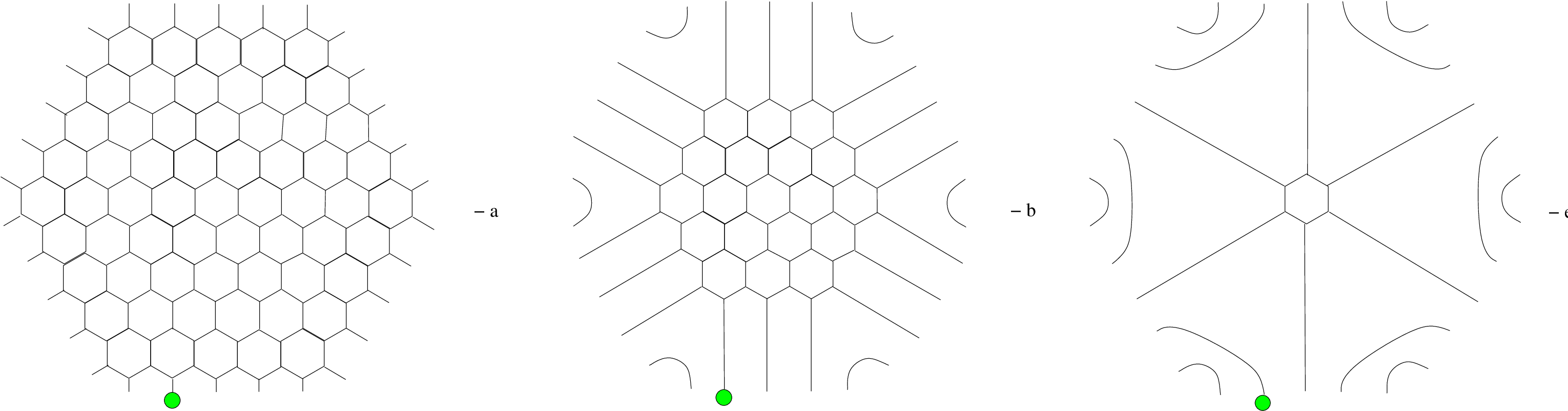}
\caption{The invariant $[\mathrm{Band}_5(W)]$ expressed in Kuperberg's basis as
$[\mathrm{Thick}_5(W)]-3[\mathrm{Thick}_3(W)\cup B]+4[W\cup B\cup B]\pm[R].$}\label{noncan5}
\end{figure}

\begin{lemma} \label{latticepath5} \leavevmode
\begin{itemize}
\item The dominant lattice path of $\mathrm{Thick}_5(W),$ resp.\ of $W\cup B\cup B,$ are:
\vspace{0.2cm}
\begin{center}
\begin{tabular}{ r c l}
$J(\mathrm{Thick}_5(W))$&=&(11\ 11111\ 00111\ 00000 \ -1-1000\ -1-1-1-1-1 \ -1-1-1) \\
$J(W\cup B\cup B)$&=&(11\ -1-1111 \ -1-1111\ -1-1011\ -1-1011\ -1-1-111\ -1-1-1).\\
\end{tabular}
\end{center}
\vspace{0.2cm}
\item The signatures of $\mathrm{Thick}_5(W),$ resp.\ $W\cup B\cup B,$ are both equal to:
\begin{align*}
\sigma_5&=(s_1,s_1, \underbrace{s_2,\dots,s_2}_{5-\textrm{times}},\underbrace{s_1,\dots,s_1}_{5-\textrm{times}},\underbrace{s_2,\dots,s_2}_{5-\textrm{times}},
\underbrace{s_1,\dots,s_1}_{5-\textrm{times}},\underbrace{s_2,\dots,s_2}_{5-\textrm{times}}, s_1,s_1,s_1)
\end{align*}
for $s_1\neq s_2\in\{\circ,\bullet\}.$
\end{itemize}
\qed
\end{lemma}

\begin{lemma}\label{terms5}
The dual canonical basis element
$\ell(\mathrm{Thick}_5(W))\in\mathrm{Inv}(V^{\sigma_5}),$ 
decomposes in Kuperberg's web basis as:
\begin{align*}
\ell(\mathrm{Thick}_5(W))&=[\mathrm{Thick}_5(W)]\pm c_1[\mathrm{Thick}_3(W)\cup B]\pm
c_2[W\cup B\cup B]\\
&\pm
 \sum_{k}c_k[L_{k}]\pm \sum_{j}c_j[R_{j}]
\end{align*}
where $c_1, c_2, c_k, c_j\in\mathbb Z[v,v^{-1}]$  and
$[\mathrm{Thick}_3(W)\cup B], [W\cup B\cup B], [L_{-}], [R_{-}]$ 
are all different web invariants.
Moreover, all $L_-$ are obtained as $G$-reductions of $\mathrm{Thick}_5(W)$ and have a $Y$ at the boundary.
\end{lemma}

\begin{proof}
We proceed as in the proof of Lemma ~\ref{terms3}.
That is, we analyze all possible $G$-reductions of\ $\mathrm{Thick}_5(W)$ that can occur.  

Varying the position of $G$ we notice that, up to isotopy fixing the boundary, 
$G$-reductions of $\mathrm{Thick}_5(W)$ split into two classes: 
Those with a $Y$ piece at the boundary and those without. It can be checked that the elements of the
latter are precisely $\mathrm{Thick}_3(W)\cup B$
and ${W\cup B\cup B}.$ 

All other $G$-reductions $L_-$ arising have a $Y$ piece at the boundary. These $L_-$ might be elliptic.
However, the $Y$ at the boundary remains after 
using skein relations.
\end{proof}
The next result can be deduced
observing that all the non-elliptic webs $L_-$ may have red graphs, and may 
be decomposed further iterating the above procedure. As the $Y$ at the boundary remains 
unchanged, the claim follows.
\begin{cor}\label{cortocorr5}
The $G$-reductions of $L_-$ never 
yield $W\cup B,$ resp.\ $\mathrm{Thick}_3(W)\cup B$ nor ${W\cup B\cup B}.$ \qed
\end{cor}

By Lemma ~\ref{terms5}, we know that $[\mathrm{Thick}_5(W)]$ is not a dual 
canonical basis element. In Proposition ~\ref{chebnotdual5} below we provide a lower estimate  
for the positive weight in the
coefficient of  the simple tensor $e_{J(W\cup B\cup B)}^{\sigma_5}$ indexed by the dominant path
$J(W\cup B\cup B)$ given in Lemma ~\ref{terms5}.

\begin{prop}\label{chebnotdual5} 
The invariant 
$[\mathrm{Thick}_5(W)]\in\mathrm{Inv}(V^{\sigma_5})$ 
decomposes in the tensor product basis as
\begin{align}\label{eq:cheb5}
\begin{split}
[\mathrm{Thick}_5(W)]=&e_{J(\mathrm{Thick}_5(W))}^{\sigma_5}  + (6+p(v) ) e_{J(W\cup B\cup B)}^{\sigma_5}\\
&+
\sum_{J'< J(\mathrm{Thick}_5(W))}c(\sigma_5,J(\mathrm{Thick}_5(W)), J') e_{J'}^{\sigma_5} 
\end{split}
\end{align}
for  $p(v)\in v^{-1}\mathbb Z_{>0}[v^{-1}]$ and $c(\sigma_5,J(\mathrm{Thick}_5(W)), J')\in\mathbb Z_{>0}[v,v^{-1}].$
\end{prop}
\begin{proof}
All we need to show is that
$$
c(\sigma_5,J(\mathrm{Thick}_5(W)), J(W\cup B\cup B))\geq (6+p(v)).
$$
For this it is enough to exhibit 6 different
flows on the non-elliptic web 
$\mathrm{Thick}_5(W)$ with boundary state $J(W\cup B\cup B)$
each with overall weight 1.
The flows illustrated in Figure ~\ref{flowcheb} satisfy this claim
as it can be seen proceeding as in the proof of Proposition ~\ref{a_1}. We colored the flows
in Figure ~\ref{flowcheb} according to their local weight (black means that the 
overall weight is 1, blue indicates that the overall weight is $v^{-1}$
while red means that the flow line has overall weight $v$).
The inequality then follows from observing that weights in state sums never cancel. 
\end{proof}
\begin{figure}[ht]  
               \includegraphics[width=5cm,height=5cm]{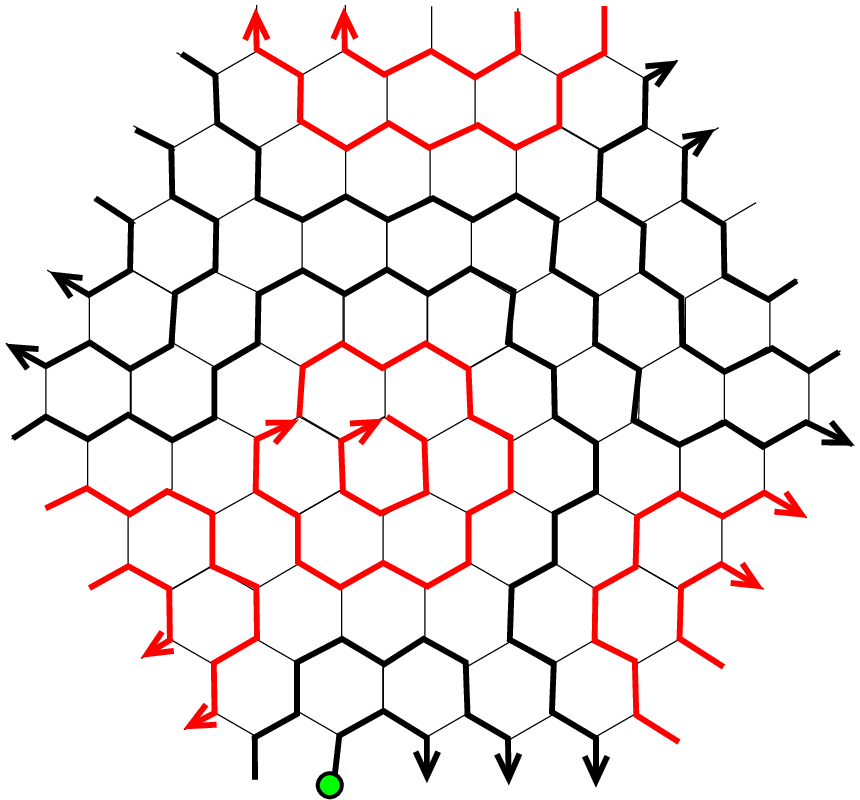}
               \includegraphics[width=5cm,height=5cm]{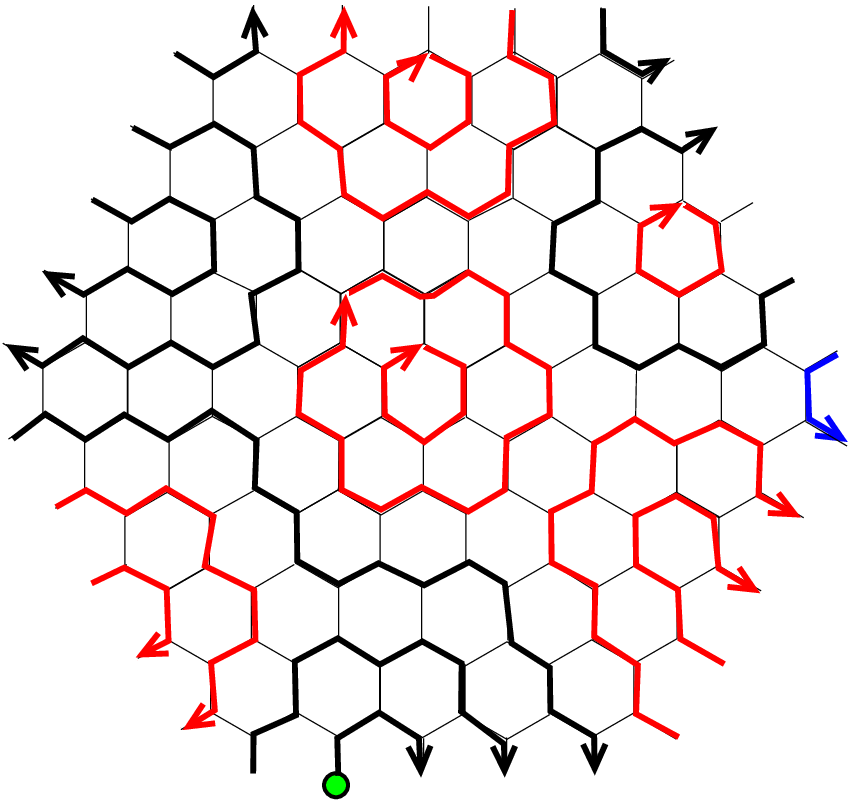}
               \includegraphics[width=5cm,height=5cm]{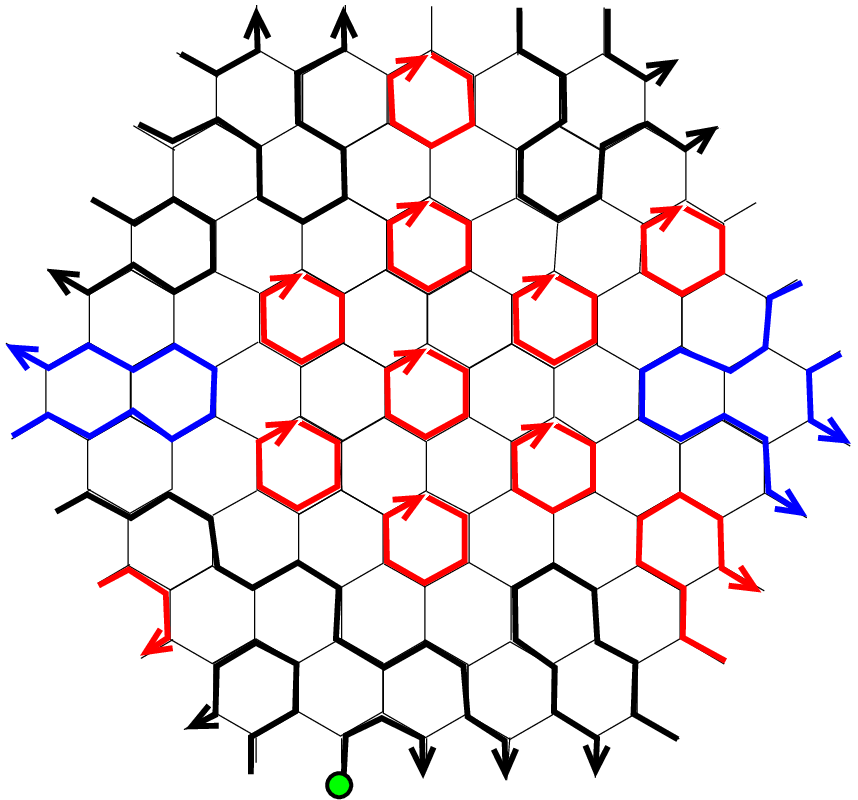}              
                \bigskip              
               \includegraphics[width=5cm,height=5cm]{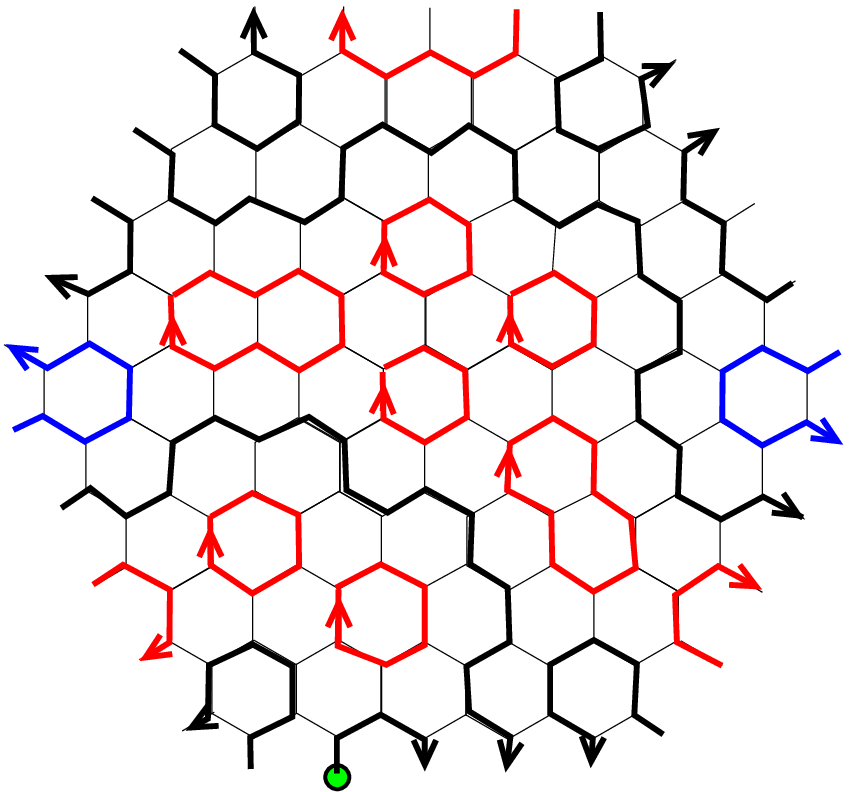}
               \includegraphics[width=5cm,height=5cm]{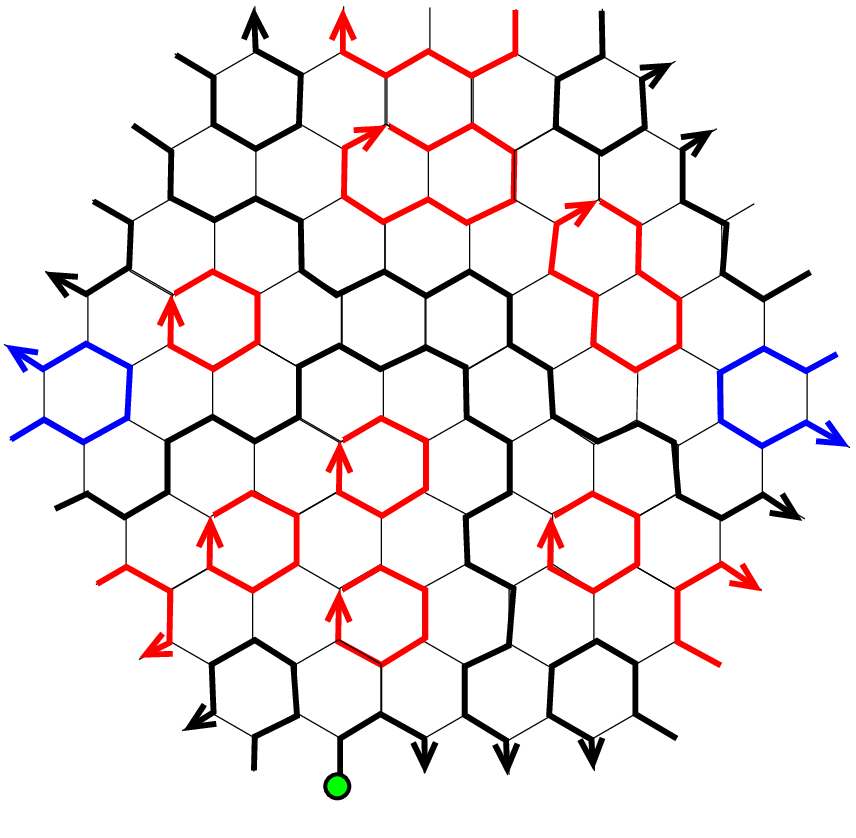}         
               \includegraphics[width=5cm,height=5cm]{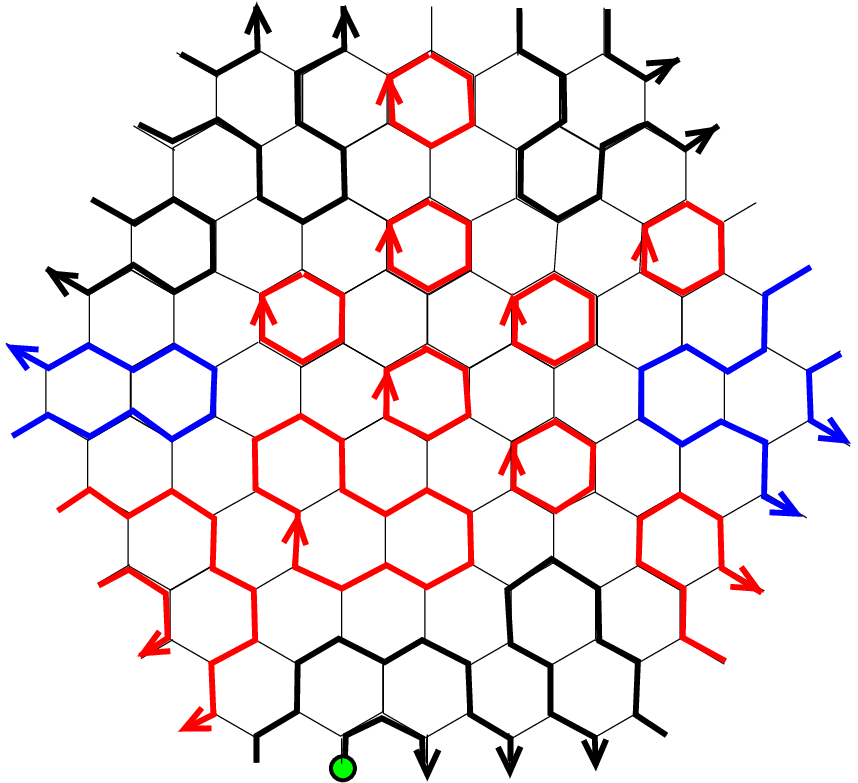}            
               \caption{Different flows on $\mathrm{Thick}_5(W)$ with boundary state 
               given by  $J(W\cup B\cup B)$ and positive overall
               weight.}\label{flowcheb}      
\end{figure}

The previous result shows that the
lower estimate for the coefficient of $e_{J(W\cup B\cup B)}^{\sigma_5}$ 
in the expansion given in Equation \eqref{eq:cheb5}  is
greater than the coefficient of $[b(W)]^2U_1=[b(W)]^2[W]\in R_{\sigma(W)}(V):$
$$
\binom{5}{2}-\binom{5}{1}=5
$$
given in the expansion of Corollary ~\ref{corcheb2}.
The proof of Proposition ~\ref{chebnotdual5} also shows that as $k$ and the number of vertices in $W$ increase, the more likely it is that flow lines have positive weight. To see this, note that more closed flow lines 
with positive weight (red oriented loops in the Figure ~\ref{flowcheb}) 
can be placed inside a larger tensor diagram.

In view of Corollary \ref{contra2}, let $\sigma_5$ be as
in Lemma ~\ref{latticepath5} and let $\sigma_1=\sigma(W)$.
Let $[\mathrm{band}_k(W)]\in R_{\sigma_1}(V)$, $k\in\{3,5\}$, be defined
by clasping the non-elliptic webs of Figure ~\ref{noncan3}, resp.\ Figure~\ref{noncan5}.
\begin{cor}\label{contra2}
Assume $\ell(\mathrm{Thick}_5(W))\in\mathrm{Inv}(V^{\sigma_5})$ has integer coefficients 
when expanded in Kuperberg's web basis.
Then the invariants  
$[\mathrm{band}_3(W)]$ and
$[\mathrm{band}_5(W)]$
of $R_{\sigma_1}(V)$ 
can not be simultaneously dual canonical basis element.
\end{cor}
\begin{proof}
Assume that both $[\mathrm{band}_3(W)]$ and $[\mathrm{band}_5(W)]$
belong to the specialization of Lusztig's dual canonical
basis for $R_{\sigma_1}(V)$ at $q=1$. 

Consider $[\ell(\mathrm{Thick}_3(W))]$ expressed in Kuperberg's basis
and let $B$ the non-elliptic web given on the right of Figure ~\ref{che2}.
Since $B$ is homotopic to the boundary of the disc containing 
the summands defining $[\ell(\mathrm{Thick}_3(W))]$ we can consider
the invariant $[\ell(\mathrm{Thick}_3(W))\cup B]$
obtained by
inserting in each summand the six $\mho$ pieces, 
after tuples of successive equally colored boundary vertices. 
Thus $[\ell(\mathrm{Thick}_3(W))\cup B]$ has
signature $\sigma_5$, obtained from $\sigma_3$ by inserting
$\sigma(\mho)$ as follows:
$$
\sigma=(s_1,\underbrace{s_1,s_2}_{\sigma(\mho)},
s_2,s_2,s_2,\underbrace{s_2,s_1}_{\sigma(\mho)},
\dots ,
s_2,s_2,s_2,\underbrace{s_2,s_1}_{\sigma(\mho)}
s_1,s_1).
$$
The invariant $[\ell(\mathrm{Thick}_3(W))\cup B]$ is then
dual canonical by Proposition ~\ref{propemd}. To see this, we observe that
$[\ell(\mathrm{Thick}_3(W))]$ is dual canonical by assumption 
and $[B]$ is dual canonical, again by Proposition ~\ref{propemd}.

Next, we express $\ell(\mathrm{Thick}_5(W))$ 
using Lemma ~\ref{terms3} as follows:
\begin{align*}
\begin{split}\label{eq:cheb5}
\ell(\mathrm{Thick}_5(W))
=&[\mathrm{Thick}_5(W)]+c_1[\mathrm{Thick}_3(W)\cup B]+
c_2[W\cup B\cup B]+\sum_{j}c_j[R_{j}]\\
=&[\mathrm{Thick}_5(W)]+c_2[W\cup B\cup B]+\sum_{j}c_j[R_{j}]\\
+&c_1\bigg([\ell(\mathrm{Thick}_3(W))\cup B]+ a_1 [W\cup B\cup B]
+\sum_{i}a_i[R_{i}\cup B]\bigg)\\
=&[\mathrm{Thick}_5(W)]
+(c_1a_1+c_2)[W\cup B\cup B]\\
+&c_1[\ell(\mathrm{Thick}_3(W))\cup B]+\sum_{k}b_k[R_{k}],
\end{split}
\end{align*}
where $J(R_j), J(R_{i}\cup B), J(R_k)$ are all different from $J(W\cup B\cup B)$ and $c_-,a_-\in\mathbb Z,$ by assumption.
Then since $\ell(\mathrm{Thick}_5(W))$ has non-negative exponent property: 
$$c({\sigma_5},J(\mathrm{Thick}_5(W)), J(W\cup B\cup B))=-(c_1a_1+c_2)=5.$$
Here the last equality follows since
we assume that $[\mathrm{band}_5(W)]$ 
is dual canonical and $\ell(\mathrm{Thick}_5(W))\in\mathrm{Inv}(V^{\sigma_5})$ has integer coefficients.
Therefore, we have that $c_1=-4$ and $c_2=3$ and $a_1=2$ in the first line in Equation \ref{eq:cheb5},
compare with Table ~\ref{chebtable}.
But this is impossible, since we proved in 
Proposition~~\ref{chebnotdual5} that 
$$c({\sigma_5},J(\mathrm{Thick}_5(W)), J(W\cup B\cup B))\geq 6,$$
and weights never cancel in $\mathrm{Inv}(V^{\sigma_5})$. 
\end{proof}
Corollary \ref{contra2} shows
that the simplest quantization of the band power
operation precludes the invariants $[\mathrm{band}_3(W)]$ 
and $[\mathrm{band}_5(W)]$ in $R_{\sigma_1}(V)$ 
to be simultaneously dual canonical. 

We went further and asked if it is possible to extend the band operation to the
$U_q(\mathfrak{sl}_3)$-invariant setting. For this we vertically superimposed 
copies of $W,$ using an appropriate notion of 
``clasping'' endpoints, and solved the two types of crossings using 
the quantum skein relations defined by G.\ Kuperberg in \cite{Kuperberg}. 
Expressing the corresponding invariant in Kuperberg's web basis, 
we find again the recursions of Theorem ~\ref{thmC2:2}, up to an overall 
scalar given by a power of the indeterminate $v$. Therefore, also for this band operation the statement of the above Corollary ~\ref{contra2} holds.

\section{Discussion and outlook}\label{out}

In this work we showed how a class of $\mathrm{SL(V)}$-invariant polynomial functions of
$R_{a,b}(V)$ can be described using recursion formulas, see Theorem ~\ref{thmC1.2}  and Theorem ~\ref{thmC2:2}.
The formulas we find agree with the recursions arising in
the context of surface cluster algebras.
To better understand this relationship,
one has to go back to the work of S.\ Fomin and P.\ Pylyavskyy, \cite{FP}.
From their results one deduces that there are values $a, b$ 
such that the ring $R_{a,b}(V)$ 
contains a sub-cluster algebra  structure of affine type $A_1^{(1)},$ see Figure 20 in \cite{FP}.
For this structure, the $\mathrm{SL(V)}$-invariant $[W]$ 
defined by the hexagonal web $W,$
can be expressed as $[W]=z_0z_3-z_1z_2\in R_{a,b}(V),$ where all
$z_i$ are cluster variables belonging to the two clusters $\{z_0,z_1\}$ and $\{z_2,z_3\}$ 
of $R_{a,b}(V),$ up to coefficient variables.
This description of $[W]$ matches the topological description of the variable $z$
established in several papers, see Section \ref{sect:intro}.
In addition, an important role is played by the
invariant $[b(W)]$ appearing in our recursions.
More precisely, this $\mathrm{SL(V)}$-invariant can be thought of as a generalization of
the coefficients described in \cite{MSW}.
To see this it suffices to remove the internal trivalent 
vertices of $b(W),$ using skein relations, and compare
with Definition 6.1. in \cite{FP}.
 
In light of these results and the evidences of B.\ Leclerc \cite{Leclerc} and 
of P.\ Lampe \cite{MR2817684}, 
A.\ Berenstein and A.\ Zelevinsky
\cite{MR3180605} as well as the results of  
M.~Ding and F.\ Xu 
\cite{MR2916330} it is generally 
expected that Chebyshev polynomials $(U_k)_{k\in\mathbb Z_{\geq 0}}$ in the variable $[W]$
belong the specialization at $q=1$ of Lusztig's dual canonical basis for $R_{a,b}(V)$.
However, when we relate our recursions to the dual canonical basis of the invariant space
$\mathrm{Hom}_{U_q(\mathfrak{sl}_3)}(V^\sigma,\mathbb C(q^\frac{1}{2}))$ 
for an appropriated tensor product $V^{\sigma}$
we find an example suggesting that there is a disagreement at the level of coefficients,
see Proposition ~\ref{chebnotdual5}.

A possible explanation for this inconsistency might be that $R_{a,b}(V)$ often carries 
simultaneously several cluster algebra structures, as explained in \cite{FP}
and it remains unclear how these generalize to the 
$U_q(\mathfrak{sl}_3)$-invariant setting.
Moreover, uncertainty still exists about the
role of the coefficient variable $[b(W)]$ and its relation
to the coefficient variables for the given quantum cluster algebra structures.
 
Despite these difficulties, a possible new conjecture to formulate appears to
be the following:

\begin{conj}\label{conj}
There is an invariant $[L]\in R_{a,b}(V)$ 
with the following properties:
\begin{itemize}  
\item $[L]$ is defined by a single tensor diagram $L$ 
(not necessarily planar) with a single internal face bounded $n\geq 6$ sides.
\item There is a quantum cluster sub-algebra, in the sense of A.~Berenstein and A.~Zelevinsky \cite{MR2146350}, of
Kronecker type in $\mathrm{Inv}(V^{\sigma_L})$ such that $[L]$ decomposes as
$vz_0z_3-v^3z_1z_2$ for $z_0, z_1, z_2, z_3$ 
quantum cluster variables in the two clusters $\{z_0,z_1\},$ and $\{z_2,z_3\}$.
\item The band operation of $[L]$ in $R_{a,b}(V)$ is defined. 
\end{itemize}
Then for all $k\geq 1,$  $[\mathrm{Band}_k(L)]$ is dual canonical at $q\rightarrow1$ and after clasping.
\end{conj}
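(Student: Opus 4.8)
We do not have a proof of Conjecture \ref{conj}; we outline below the strategy we would pursue. The plan is to reduce the statement to the known description of the dual canonical basis of the Kronecker-type quantum cluster algebra, and then to transfer that description into $\mathrm{Inv}(V)$ by means of the red-graph machinery of Section \ref{redgraph}, by induction on $k$.

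First, performing the band operation of $[L]$ directly in $\mathrm{Inv}(V)$ --- vertically superimposing copies of $L$ and resolving the crossings with the quantum skein relations of \cite{Kuperberg}, as in the remark following the proof of Proposition \ref{contra} --- and arguing as in the proof of Theorem \ref{thmC2}, one obtains $[\mathrm{Band}_k(L)]=U_k([L],[B(L)])$ in $\mathrm{Inv}(V)$ up to an overall power of $v$. Substituting the hypothesized decomposition $[L]=vz_0z_3-v^3z_1z_2$ then places $[\mathrm{Band}_k(L)]$ inside the Kronecker-type quantum cluster subalgebra of $\mathrm{Inv}(V)$, where, up to a monomial in $v$, it is the imaginary-root element $U_k(z)$ of Section \ref{Kt}. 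By Theorem \ref{thm1}, together with \cite{MR2817684,MR3180605}, $U_k(z)$ lies in the specialization at $q=1$ of Lusztig's dual canonical basis for that subalgebra, viewed inside the positive part $U_q(\mathfrak{n})$ of the quantized Kac-Moody algebra of type $A_1^{(1)}$.

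The second, and harder, step is to upgrade this to a statement about Lusztig's dual canonical basis of $\mathrm{Inv}(V)$, which is defined through the bar-involution $\Phi$ of Theorem \ref{Lusztig} rather than through $U_q(\mathfrak{n})$; the two notions are related but not identical, cf.\ \cite{MR1180036}. Here we would repeat the analysis of Lemma \ref{terms} and of the proof of Proposition \ref{contra}, now for the single web invariant $[\mathrm{Thick}_k(L)]=[L]^k$ (a single web invariant by Lemma \ref{lemma}, after passing to the arborized form as in the remark following Corollary \ref{corcheb2}) and for general $k$: identify the exact paired red graphs of the defining web and classify their $G$-reductions. As in Lemma \ref{terms}, one expects these to split into webs carrying a $Y$-piece at the boundary, which become dual canonical after finitely many further reductions by Theorem \ref{no_red_graph}, and products of a lower thickening of $L$ with copies of $B(L)$, which are dual canonical by the inductive hypothesis together with Proposition \ref{dualcanoprod}. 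Theorem \ref{thmLHR} then produces the dual canonical element $\ell(\mathrm{Thick}_k(L))$ whose web-basis expansion is $[\mathrm{Thick}_k(L)]$ minus a $\mathbb C(v)$-combination of these reductions; by Corollary \ref{corcheb2} and the Chebyshev recursion of Theorem \ref{thmC2}, this combination should reorganize, after $q\rightarrow1$ and restitution, into $[L]^k-U_k([L],[B(L)])$, so that $\ell(\mathrm{Thick}_k(L))$ is precisely the dual canonical element specializing to $[\mathrm{Band}_k(L)]$ required by the conjecture.

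The crux, and the reason the statement remains open, is the coefficient bookkeeping in the last step. Proposition \ref{contra} shows that for the minimal single-cycle web $W$ the identity $c(S^5,J(\mathrm{Thick}_5(W)),J(W\cup B\cup B))=5$ that this argument requires in fact fails, the true value being at least $6$; the content of the conjecture is that for a carefully chosen $[L]$ --- necessarily \emph{not} the minimal web, and carrying the $q$-twisted decomposition $vz_0z_3-v^3z_1z_2$ rather than $z_0z_3-z_1z_2$ --- the quantum corrections conspire so that the relevant state-sum multiplicities match the combinatorial data for $U_k(z)$ recorded in \cite{MR2817684}. Working over $\mathbb C(v)$ rather than over $\mathbb Z$ removes the integrality obstruction of Proposition \ref{contra}, but the match must still be checked directly. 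In practice a proof would require (i) exhibiting an explicit tensor diagram $L$ with the three listed properties, and (ii) comparing, via the flow and weight calculus of Sections \ref{NCcase} and \ref{WlambdaY}, the coefficients $c(S,J(\mathrm{Thick}_k(L)),J')$ with those of $U_k(z)$ after accounting for the degree shifts of Theorem \ref{thmLHR}. I expect step (ii) to be the main obstacle, since it is exactly the point at which Proposition \ref{contra} obstructs the minimal web.
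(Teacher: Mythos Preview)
The statement is a \emph{conjecture} in the paper, not a theorem; the paper offers no proof, only motivation from the Kronecker-type results of \cite{MR2916330,MR2817684,MR3180605} and the remarks preceding the conjecture. You correctly recognize this and present a strategy rather than a proof, so there is nothing in the paper to compare your argument against.

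Your outline is internally coherent and draws on the right ingredients from the paper (the quantum band recursion, Theorem~\ref{thm1}, the red-graph decomposition of Theorem~\ref{thmLHR}, and the obstruction of Proposition~\ref{contra}). One point worth flagging: your second step tacitly assumes that the exact red graphs of $\mathrm{Thick}_k(L)$ behave as they do for $\mathrm{Thick}_k(W)$ in Lemma~\ref{terms}, i.e.\ that the $G$-reductions split cleanly into ``$Y$-at-boundary'' terms and lower thickenings times copies of $B(L)$. But $L$ is by hypothesis a possibly non-planar tensor diagram, and the red-graph theory of \cite{LHRII} is set up for non-elliptic webs; even after arborization there is no guarantee that the reduction pattern persists, nor that Lemma~\ref{exactcycles} and \cite[Prop.~4.15]{LHRII} constrain the exact red graphs in the same way. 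This is a second structural obstacle alongside the coefficient match you already identify as the crux.
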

Further work also needs to be
done to establish if other recursions can be used to 
compare the different bases for $R_{a,b}(V).$ For instance, 
it would be important to clarify which basis of $R_{a,b}(V)$
contains the Chebyshev polynomials $(T_k)_{k\in\mathbb Z_{\geq 0}}$ in the variable $[W]$. 
More research could also be conducted to determine
the role of Chebyshev recursions in related invariant spaces. 



\begin{thebibliography}{10}

\bibitem{MR2146350}
A.~Berenstein and A.~Zelevinsky.
\newblock Quantum cluster algebras.
\newblock {\em Adv. Math.}, 195(2):405--455, 2005.

\bibitem{MR3180605}
A.~Berenstein and A.~Zelevinsky.
\newblock Triangular bases in quantum cluster algebras.
\newblock {\em Int. Math. Res. Not. IMRN}, (6):1651--1688, 2014.

\bibitem{BRUNDAN200617}
J.~Brundan.
\newblock Dual canonical bases and kazhdan–lusztig polynomials.
\newblock {\em Journal of Algebra}, 306(1):17 -- 46, 2006.
\newblock Special Issue in Honour of Gordon Douglas James.

\bibitem{MR2916330}
M.~Ding and F.~Xu.
\newblock Bases of the quantum cluster algebra of the {K}ronecker quiver.
\newblock {\em Acta Math. Sin. (Engl. Ser.)}, 28(6):1169--1178, 2012.

\bibitem{MR2912471}
G.~Dupont.
\newblock Cluster multiplication in regular components via generalized
  {C}hebyshev polynomials.
\newblock {\em Algebr. Represent. Theory}, 15(3):527--549, 2012.

\bibitem{FP}
S.~Fomin and P.~Pylyavskyy.
\newblock Tensor diagrams and cluster algebras.
\newblock {\em Adv. Math.}, 300:717--787, 2016.

\bibitem{fontaine_kamnitzer_kuperberg_2013}
B.~Fontaine, J.~Kamnitzer, and G.~Kuperberg.
\newblock Buildings, spiders, and geometric satake.
\newblock {\em Compositio Mathematica}, 149(11):1871?1912, 2013.

\bibitem{2017arXiv170200385F}
C.~{Fraser}.
\newblock {Braid group symmetries of Grassmannian cluster algebras}.
\newblock {\em ArXiv e-prints}, Feb. 2017.

\bibitem{MR2522486}
R.~Goodman and N.~R. Wallach.
\newblock {\em Symmetry, representations, and invariants}, volume 255 of {\em
  Graduate Texts in Mathematics}.
\newblock Springer, Dordrecht, 2009.

\bibitem{2014arXiv1411.1394G}
M.~{Gross}, P.~{Hacking}, S.~{Keel}, and M.~{Kontsevich}.
\newblock {Canonical bases for cluster algebras}.
\newblock {\em ArXiv e-prints}, Nov. 2014.

\bibitem{KK}
M.~Khovanov and G.~Kuperberg.
\newblock Web bases for {${\rm sl}(3)$} are not dual canonical.
\newblock {\em Pacific J. Math.}, 188(1):129--153, 1999.

\bibitem{MR1044584}
H.~Kraft and P.~Claudio.
\newblock Classical invariant theory a primer.
\newblock {\em Available online at:
  https://math.unibas.ch/uploads/x4epersdb/files/primernew.pdf}, pages 1--125,
  July 1996.
\newblock [Accessed 2016].

\bibitem{doi:10.1142/S0129167X94000048}
G.~Kuperberg.
\newblock The quantum g2 link invariant.
\newblock {\em International Journal of Mathematics}, 05(01):61--85, 1994.

\bibitem{Kuperberg}
G.~Kuperberg.
\newblock Spiders for rank {$2$} {L}ie algebras.
\newblock {\em Comm. Math. Phys.}, 180(1):109--151, 1996.

\bibitem{Lamberti}
L.~Lamberti.
\newblock  On cluster categories and related topics
\newblock {\em ETH-Zuerich}, Thesis, 2013.

\bibitem{MR2817684}
P.~Lampe.
\newblock A quantum cluster algebra of {K}ronecker type and the dual canonical
  basis.
\newblock {\em Int. Math. Res. Not. IMRN}, (13):2970--3005, 2011.

\bibitem{Leclerc}
B.~Leclerc.
\newblock Imaginary vectors in the dual canonical basis of {$U_q(\mathfrak
  n)$}.
\newblock {\em Transform. Groups}, 8(1):95--104, 2003.

\bibitem{MR1180036}
G.~Lusztig.
\newblock Canonical bases in tensor products.
\newblock {\em Proc. Nat. Acad. Sci. U.S.A.}, 89(17):8177--8179, 1992.

\bibitem{MR1227098}
G.~Lusztig.
\newblock {\em Introduction to quantum groups}, volume 110 of {\em Progress in
  Mathematics}.
\newblock Birkh\"auser Boston, Inc., Boston, MA, 1993.

\bibitem{MR1758244}
G.~Lusztig.
\newblock Semicanonical bases arising from enveloping algebras.
\newblock {\em Adv. Math.}, 151(2):129--139, 2000.

\bibitem{Mackaay}
M.~Mackaay, W.~Pan, and D.~Tubbenhauer.
\newblock The sl 3 -web algebra.
\newblock {\em Mathematische Zeitschrift}, 277(1):401--479, 2014.

\bibitem{20071503M}
S.~E. Morrison.
\newblock {\em A diagrammatic category for the representation theory of
  {$U_q(sl_n)$}}.
\newblock ProQuest LLC, Ann Arbor, MI, 2007.
\newblock Thesis (Ph.D.)--University of California, Berkeley.

\bibitem{MSW}
G.~Musiker, R.~Schiffler, and L.~Williams.
\newblock Bases for cluster algebras from surfaces.
\newblock {\em Compos. Math.}, 149(2):217--263, 2013.

\bibitem{MR1881401}
T.~Ohtsuki.
\newblock {\em Quantum invariants}, volume~29 of {\em Series on Knots and
  Everything}.
\newblock World Scientific Publishing Co., Inc., River Edge, NJ, 2002.
\newblock A study of knots, 3-manifolds, and their sets.

\bibitem{MR1457194}
T.~Ohtsuki and S.~Yamada.
\newblock Quantum {${\rm SU}(3)$} invariant of {$3$}-manifolds via linear skein
  theory.
\newblock {\em J. Knot Theory Ramifications}, 6(3):373--404, 1997.

\bibitem{LHRI}
L.-H. Robert.
\newblock A large family of indecomposable projective modules for the
  {K}hovanov-{K}uperberg algebras of {$sl_3$}-webs.
\newblock {\em J. Knot Theory Ramifications}, 22(11):1350062, 25, 2013.

\bibitem{LHRII}
L.-H. Robert.
\newblock A characterization of indecomposable web modules over
  {K}hovanov-{K}uperberg algebras.
\newblock {\em Algebr. Geom. Topol.}, 15(3):1303--1362, 2015.

\bibitem{Thurston}
D.~P. Thurston.
\newblock Positive basis for surface skein algebras.
\newblock {\em Proc. Natl. Acad. Sci. USA}, 111(27):9725--9732, 2014.

\bibitem{MR873400}
H.~Wenzl.
\newblock On sequences of projections.
\newblock {\em C. R. Math. Rep. Acad. Sci. Canada}, 9(1):5--9, 1987.

\end{thebibliography}
\end{document}